\documentclass[11pt,a4paper]{amsart}
\usepackage{amsfonts}

\usepackage{}
\usepackage[mathscr]{eucal}
\usepackage{amssymb}
\usepackage{amsbsy}

\usepackage{CJK,CJKnumb}
\usepackage[CJKbookmarks,colorlinks,
            linkcolor=black,
            anchorcolor=black,
            citecolor=blue]{hyperref}
\usepackage{color,soul}              
\usepackage{indentfirst}        
\usepackage{latexsym,bm}        
\usepackage{graphics,graphicx}
\usepackage{cases}
\usepackage{pifont}
\usepackage{txfonts}
\usepackage{xcolor}
\usepackage[all,knot,poly,cmtip]{xy}
\usepackage[usenames,dvipsnames]{pstricks}
\usepackage{epsfig}
\usepackage{pst-grad} 
\usepackage{pst-plot} 
\usepackage{shuffle}

\setlength{\textwidth}{14cm} \setlength{\textheight}{20cm}
\setlength{\hoffset}{0cm} \setlength{\voffset}{0cm}
\setlength{\parindent}{2em}                 
\setlength{\parskip}{2.5pt plus1pt minus1pt}  

\setlength{\abovedisplayskip}{2pt plus1pt minus1pt}     
\setlength{\belowdisplayskip}{6pt plus1pt minus1pt}     
\setlength{\arraycolsep}{2pt}   

\allowdisplaybreaks[4]  


\numberwithin{equation}{section}

\newcommand{\al}{\alpha}
\newcommand{\be}{\beta}
\newcommand{\de}{\delta}
\newcommand{\De}{\Delta}
\newcommand{\ep}{\epsilon}

\newcommand{\ga}{\gamma}
\newcommand{\Ga}{\Gamma}
\newcommand{\la}{\lambda}

\newcommand{\La}{\Lambda}
\newcommand{\ot}{\otimes}

\newcommand{\Om}{\Omega}

\newcommand{\te}{\theta}
\newcommand{\Te}{\Theta}
\newcommand{\ve}{\varepsilon}

\newcommand{\mA}{\mathcal{A}}

\newcommand{\mC}{\mathcal{C}}
\newcommand{\mE}{\mathcal{E}}
\newcommand{\mK}{\mathcal{K}}
\newcommand{\mL}{\mathcal{L}}
\newcommand{\mP}{\mathcal{P}}

\newcommand{\mW}{\mathcal{W}}

\newcommand{\bN}{\mathbb{N}}
\newcommand{\bQ}{\mathbb{Q}}
\newcommand{\bP}{\mathbb{P}}
\newcommand{\bZ}{\mathbb{Z}}

\newcommand{\mb}[1]{\mbox{#1}}

\newcommand{\bs}[1]{{\scriptsize\mbox{#1}}}

\newcommand{\stt}[1]{{\scriptstyle #1}}

\newcommand{\ol}[1]{\overline{#1}}

\newcommand{\lan}{\langle}
\newcommand{\ran}{\rangle}
\newcommand{\lb}{\left(}
\newcommand{\rb}{\right)}
\newcommand{\rw}{\rightarrow}
\newcommand{\beq}{\begin{equation}}
\newcommand{\eeq}{\end{equation}}



\begin{document}

\newtheorem{theorem}{Theorem}[section]

\newtheorem{lem}[theorem]{Lemma}

\newtheorem{cor}[theorem]{Corollary}
\newtheorem{prop}[theorem]{Proposition}

\theoremstyle{remark}
\newtheorem{rem}[theorem]{Remark}

\newtheorem{defn}[theorem]{Definition}

\newtheorem{exam}[theorem]{Example}

\theoremstyle{conjecture}
\newtheorem{con}[theorem]{Conjecture}

\setcounter{section}{-1}

\renewcommand\arraystretch{1.2}

\title[On weak peak quasisymmetric functions]{On weak peak quasisymmetric functions}

\author[Li]{Yunnan Li}
\address{School of Mathematics and Information Science, Guangzhou University, Waihuan Road West 230, Guangzhou 510006, China}
\email{ynli@gzhu.edu.cn}

\subjclass[2010]{05E05, 16T30, 16W99}

\begin{abstract}
In this paper, we construct the weak version of peak quasisymmetric functions inside the Hopf algebra of weak composition quasisymmetric functions (WCQSym) defined by Guo, Thibon and Yu. Weak peak quasisymmetric functions (WPQSym) are studied in several aspects. First we find a natural basis of WPQSym lifting Stembridge's peak functions. Then we confirm that WPQSym is a Hopf subalgebra of WCQSym by giving explicit multiplication, comultiplication and antipode formulas. By extending Stembridge's descent-to-peak maps, we also show that WPQSym is a Hopf quotient algebra of WCQSym. On the other hand, we prove that WPQSym embeds as a Rota-Baxter subalgebra of WCQSym, thus of the free commutative Rota-Baxter algebra of weight 1 on one generator. Moreover,
WPQSym can also be a Rota-Baxter quotient algebra of WCQSym.
\end{abstract}

\keywords{weak composition, weak peak quasisymmetric function, Hopf algebra, Rota-Baxter algebra}

\maketitle


\section{Introduction}

Weak compositions involving zeros have many applications in combinatorics. However, to our knowledge the study on properties of weak compositions can rarely be found in literature. Recently, the term of ``weak composition quasisymmetric functions'' is proposed, and the algebra of weak composition quasisymmetric functions (WCQSym) is thoroughly studied in \cite{GTY}. It generalizes the ordinary quasisymmetric functions (QSym) defined by Gessel in \cite{Ge} with many extraordinary properties, especially Hopf algebra structures; see \cite{GKL,LMW,MR}. By comparison with QSym, WCQSym is also a connected, graded Hopf algebra, but beyond the category of combinatorial Hopf algebras considered in \cite{ABS,BLL,BL}, as its graded components are no longer finite dimensional.

One of the interesting applications to consider weak composition quasisymmetric functions is due to the study about Rota-Baxter algebras; see also \cite{GTY,YGZ}.
The discovery of Rota-Baxter algebras originates from Baxter's probability research to understand Spitzer's identity in fluctuation theory. So far the theory of Rota-Baxter algebras have been developed in many areas involving mathematics and theoretical physics; see \cite{Guo,KRY} for a detailed introduction to this subject.

In \cite{GK} the authors construct free Rota-Baxter algebras by using the mixable shuffle algebra.
In \cite{EG} the authors discuss Hopf algebra structures on free Rota-Baxter algebras, by relatively considering the mixable shuffle Hopf algebras, thus generalize a previous work \cite{AGKO} on this topic.
In \cite{YGZ} the authors have realized the free commutative \textit{nonunitary} Rota-Baxter algebras on one generator in terms of left weak composition quasisymmetric functions (LWCQSym). In order to further investigate the relationship between Rota-Baxter algebras and generalized symmetric functions envisioned by Rota after his work \cite{Ro}, Guo, Thibon and Yu in \cite{GTY} introduces weak composition quasisymmetric functions as formal power series with suitable semigroup exponents. In this paper, they focus on the free commutative unitary Rota-Baxter algebra $\shuffle(x)$ of weight 1 generated by one element $x$. First the Hopf algebra structure on WCQSym is considered, then the mixable shuffle algebra $\shuffle^+(x)$ is realized as the Hopf algebra WCQSym. Meanwhile, $\shuffle^+(x)$ can be straightly extended to define $\shuffle(x)$ as tensor product $\bZ[x]\ot\shuffle^+(x)$ of Hopf algebras.

On the other hand, there is an interesting class of quasisymmetric functions, peak quasisymmetric functions (PQSym). It is introduced independently by Billey and Haiman in \cite{BH} via Schubert calculus, and also by Stembridge in \cite{Ste} via the theory of enriched $P$-partitions. The study of these functions has close relation with many other topics in mathematics, such as representation theory \cite{BHT}, Hopf algebras \cite{BMSW,Sch}, Coxeter groups \cite{BB,BC}, discrete geometry \cite{BHW} and combinatorial tableaux \cite{JL,Og}, etc.

Mainly inspired by the work in \cite{GTY,Ste}, we apply the machinery of enriched $P$-partitions to the case of weak compositions, and define our weak version of peak quasisymmetric functions. By contrast with the classical peak functions, the weak peak quasisymmetric functions (WPQSym) have not only many parallel properties but also considerable new features, especially the connection with Rota-Baxter algebras. We lift Stembridge's peak functions onto WCQSym to define the weak peak fundamental quasisymmetric functions (WPFQF). The expansion formulas of WPFQFs in terms of two usual bases of WCQSym are given. By elaborate analysis, we extract a natural basis of WPQSym from WPFQFs, whose linear independence is tough to confirm.  Also we derive the multiplication rule of WPFQFs. Furthermore, we clarify the Hopf subalgebra structure of WPQSym in WCQSym. In particular, the cancellation-free antipode formula is proved by the method of sign-reversing involutions proposed in \cite{BS}. Besides, WPQSym has a prominent property that it is a Rota-Baxter subalgebra of WCQSym.

Note that zeros in weak compositions strongly effect WCQSym, and the proper definition of peak sets of weak compositions play a crucial role for our construction of WPQSym. Consequently, we fully study the Hopf subalgebra WPQSym$^0$ of WPQSym, with base functions indexed by weak compositions only having zeros. During this study, we find some unusual combinatorial identities as byproducts. Furthermore, we define a Hopf surjection from WCQSym to WPQSym, as an extension of Stembridge's descent-to-peak map, and thus WPQSym also becomes a Rota-Baxter quotient algebra of WCQSym.

The organization of the paper is as follows. In $\S 1$ we introduce some combinatorial notations, definitions, then recall weak composition quasisymmetric functions and peak quasisymmetric functions. In $\S 2$ we construct weak peak quasisymmetric functions, and give two expansion formulas for the WPFQFs. In $\S 3$ we mainly show that WPQSym is a graded subring of WCQSym, with the explicit multiplication rule of WPFQFs. A natural basis of WPQSym is obtained from WPFQFs. In $\S 4$ we discuss the Hopf algebra structure on WPQSym. The formulas of comultiplication and antipode for WPFQFs are given. Besides, we provide a Hopf algebra projection from WPQSym to PQSym, and extend Stembridge's descent-to-peak map to show that WPQSym is also a Hopf quotient algebra of WCQSym. In $\S 5$ we further show that WPQSym is not only a Rota-Baxter subalgebra of WCQSym but also a quotient algebra of WCQSym compatible with another Rota-Baxter algebra structure. In the end, we recap all the structures in our framework of weak peak quasisymmetric functions with a commutative diagram.

\section{Background}
\subsection{Notations and definitions}
Throughout this paper, we mainly consider two base rings $\bZ$ and $\bQ$.
For two rings $A$ and $k$,  we always denote $A_k:=A\ot_\bZ k$, when the base ring changes from $\bZ$ to $k$.

Denote by $\bN$ (resp. $\bZ'$) the set of nonnegative (resp. nonzero) integers, $\bP$ the set of positive integers. Given any $m,n\in\bP,\,m\leq n$, let $[m,n]:=\{m,m+1,\dots,n\}$ and $[n]:=[1,n]$ for short. Given $n\in\bN$, denote $\mW\mC(n)$ the set of \textit{weak compositions} of $n$, consisting of ordered tuples of nonnegative integers summed up to $n$. Let $\mW\mC:=\bigcup\limits_{n\geq0}^.\mW\mC(n)$. For convenience we assume $\emptyset\in\mW\mC$, called the \textit{empty} weak composition. In particular, we call $\al$ a \textit{composition} if all its entries are positive. Given $n\in\bP$, let $\mC(n)$ be the set of compositions of $n$ and $\mC:=\bigcup\limits_{n\geq1}^.\mC(n)$ with $\emptyset\in\mC$. Write $\al\vDash n$ when $\al\in \mC(n)$.
Assume that $\al=(0^{i_1},s_1,\dots,0^{i_k},s_k,0^{i_{k+1}})$ with all $i_p\in\bN$ and $s_q\in\bP$,
let $\ell(\al):=k+i_1+\cdots+i_{k+1}$ as the \textit{length} of $\al$, $\ell_0(\al):=i_1+\cdots+i_{k+1}$, called the \textit{0-length} of $\al$, $|\al|:=n$ as the \textit{weight} of $\al$, also $\|\al\|:=|\al|+\ell_0(\al)$, called the \textit{total weight} of $\al$, and define the \textit{descent set} of $\al$ as
\[D(\al):=\{a_1,\dots,a_k\,|\,a_j=i_1+s_1+\cdots+i_j+s_j,\,j=1,\dots,k\}.\]
Also, the \textit{refining order} $\leq$ on $\mW\mC(n)$ is defined as in \cite{GTY}.
If $\al=(0^{i_1},\al_1,\dots,0^{i_k},\al_k,0^{i_{k+1}})$, $\be=(0^{j_1},\be_1,\dots,0^{j_k},\be_k,0^{j_{k+1}})\in\mW\mC(n)$, where $i_1,\dots,i_{k+1},j_1,\dots,j_{k+1}\in\bN$ such that either $i_{k+1}=j_{k+1}=0$ or $i_{k+1},j_{k+1}\geq1$, and for any $q\in[k]$,
$\al_q,\be_q$ are compositions of the same weight, set
\[\al\leq\be\mb{ if }i_p\leq j_p\mb{ and }D(\be_q)\subseteq D(\al_q)\mb{ for }p=1,\dots,k+1\mb{ and }q=1,\dots,k.\]
For any weak composition $\al$, let $\ol{\al}$ be the composition obtaining from $\al$ by erasing all the zeros. For any $\al=(a_1,\dots,a_r),\,\be=(b_1,\dots,b_s)$ with all $a_i,b_j\in\bN$, define the \textit{concatenation} $\al\be:=(a_1,\dots,a_r,b_1,\dots,b_s)$, and the \textit{near concatenation} $\al\vee\be:=(a_1,\dots,a_r+b_1,\dots,b_s)$. Define the \textit{reverse} of $\al$ as $\al^r:=(a_r,\dots,a_1)$, the \textit{complement} $\al^c$ of $\al$ as
\[\al^c:=(1^{a_1})\bullet(1^{a_2})\bullet\cdots\bullet(1^{a_r}),\]
where we set $(1^{a_i})=(0)$ if $a_i=0$ and $(1^{a_i})\bullet(1^{a_{i+1}})$ means $(1^{a_i})\vee(1^{a_{i+1}})$ if $a_i,a_{i+1}\in\bP$, otherwise we only concatenate them.
Furthermore, the \textit{conjugate} or \textit{transpose} of $\al$ is $\al^t:=(\al^r)^c=(\al^c)^r$. For example, if $\al=(2,0^2,1^2,2)$, then $\al^c=(1^2,0^2,3,1)$ and $\al^t=(1,3,0^2,1^2)$. Sometimes for simplicity, we will express weak compositions as one-line words on $\bN$ without confusion.


\subsection{Weak composition quasisymmetric functions}
For $\al=(n_1,\dots,n_\ell)\in\mW\mC$ with all $n_p\in\bN$, define the \textit{weak composition monomial quasisymmetric function} (\textit{WCMQF})
\[M_\al=\sum_{1\leq i_1<\cdots<i_\ell}x_{i_1}^{n_1}\cdots x_{i_\ell}^{n_\ell},\]
lying in $\bZ[[x_1,x_2,\dots]]_\bN$, the ring of \textit{$\bN$-exponent formal power series}. Let $M_\emptyset=1$. Then $\{M_\al\}_{\al\in\mW\mC}$ forms a $\bZ$-basis of the ring of \textit{weak composition quasisymmetric functions}, defined in \cite{GTY} and denoted by WCQSym. Also denote QSym the subring of \textit{quasisymmetric functions} spanned by $\{M_\al\}_{\al\in\mC}$ as usual.
Let WCQSym$_n$ be the submodule of WCQSym spanned by $\{M_\al\}_{\al\in\mW\mC(n)}$, then WCQSym$=\bigoplus_{n\geq0}$WCQSym$_n$. The multiplication of $M_\al$'s in WCQSym is given by the quasi-shuffle product on the free abelian group $\bZ\mW\mC$, when WCQSym is identified with $\bZ\mW\mC$ as abelian groups. The \textit{quasi-shuffle} product $*$ on $\bZ\mW\mC$ is defined recursively as follows,
\[\al*\emptyset=\emptyset*\al=\al,\,\al*\be=(r_1,\al'*\be)+(s_1,\al*\be')+(r_1+s_1,\al'*\be')\]
for $\al=(r_1,\dots,r_k)=(r_1,\al'),\,\be=(s_1,\dots,s_l)=(s_1,\be')\in\mW\mC$.
It generalizes the \textit{shuffle} product $\shuffle$ on $\bZ\mW\mC$ recursively defined by
\[\al\shuffle\emptyset=\emptyset\shuffle\al=\al,\,\al\shuffle\be=(r_1,\al'\shuffle\be)+(s_1,\al\shuffle\be').\]
For convenience, we define a $\bZ$-bilinear pair $\lan\cdot,\cdot\ran$ on $\bZ\mW\mC$ satisfying $\lan \al,\be\ran=\de_{\al\be}$. Then $M_\al\cdot M_\be=\sum\limits_{\ga\in\mW\mC}\lan\ga,\al*\be\ran M_\ga$, and WCQSym is an $\bN$-graded ring with respect to the weight.

On the other hand,
for any $\al=(0^{j_1},s_1,\dots,0^{j_k},s_k,0^{j_{k+1}})\in\mW\mC$ with all $j_p\in\bN,\,s_q\in\bP$, define the \textit{weak composition fundamental quasisymmetric function} (\textit{WCFQF})
\[F_\al=\sum_{1\leq i_1\leq\cdots\leq i_{a_k+j_{k+1}}\atop j\in D(\al)\Rightarrow i_{j}<i_{j+1}}
x_{i_1}^0\dots x_{i_{j_1}}^0x_{i_{j_1+1}}\cdots x_{i_{a_1}}\cdots
x_{i_{a_{k-1}+1}}^0\dots x_{i_{a_{k-1}+j_k}}^0x_{i_{a_{k-1}+j_k+1}}\cdots x_{i_{a_k}}
x_{i_{a_k+1}}^0\dots x_{i_{a_k+j_{k+1}}}^0\]
where $D(\al):=\{a_1,\dots,a_k\}$.  Let also $F_\emptyset=1$.
In \cite[Prop. 3.1]{GTY}, the authors prove the expansion formula
\beq\label{FM}
F_\al=\sum_{\be\leq\al}c_{\al\be}M_\be,\, c_{\al\be}={i_1\choose j_1}\cdots{i_k \choose j_k}{i_{k+1}-1\choose j_{k+1}-1}
\eeq
for $\al=(0^{i_1},\al_1,\dots,0^{i_k},\al_k,0^{i_{k+1}}),\,\be=(0^{j_1},
\be_1,\dots,0^{j_k},\be_k,0^{j_{k+1}})\in\mW\mC(n)$. Here we use the convention that ${-1 \choose -1}=1$ and ${i \choose j}=0$ if $i\in\bN,\,j\in\bZ$ such that $j<0$ or $i<j$, also for the rest of the paper.
Thus $\{F_\al\}_{\al\in\mW\mC(n)}$ is another $\bZ$-basis of WCQSym$_n$, and by \cite[Prop. 3.2]{GTY},
\beq\label{MF}
M_\al=\sum_{\be\leq\al}(-1)^{\ell(\be)-\ell(\al)}c_{\al\be}F_\be.
\eeq


\subsection{Peak quasisymmetric functions}
For completeness, we briefly recall the ring of \textit{peak quasisymmetric functions} defined in \cite{Ste} and denoted by PQSym. It is a graded subring of QSym. We say $P$ is a \textit{peak subset} of $[n]$, if $P\subseteq[2,n-1]$ and $i\in P\Rightarrow i-1\notin P$. Given $\al\in\mC(n)$, let
\[P(\al):=\{i\in D(\al)\cap[2,n-1]\,|\,i-1\notin D(\al)\}\]
be its associated peak subset of $[n]$. Let $\mP_n$ be the set consisting of all peak subsets of $[n]$.

Recall that Stembridge's \textit{peak functions} in PQSym can be defined by
\beq\label{ka}K_P=2^{|P|+1}\sum_{\al\vDash n\atop P\subseteq D(\al)\triangle(D(\al)+1)}F_\al,\,P\in\mP_n,\eeq
where $D\triangle(D+1)=D\backslash(D+1)\cup (D+1)\backslash D$ for any $D\subseteq[n-1]$ and $D+1:=\{x+1:x\in D\}$. Then $\{K_P\}_{P\in\mP_n}$ forms a $\bZ$-basis of PQSym$_n$ and there also exists a surjective ring homomorphism
\[\te:\mb{QSym}\rw\mb{PQSym},\quad F_\al\mapsto K_{P(\al)},\]
called the \textit{descent-to-peak map}. Note that
\beq\label{ka}
K_P=\sum_{\al\vDash n\atop P\subseteq D(\al)\cup(D(\al)+1)}2^{\ell(\al)}M_\al,\,P\in\mP_n.
\eeq
By convenience, we also write $K_\al:=K_{P(\al)}$ for any $\al\in\mC(n)$.

%

\section{Enriched $P$-partitions and weak peak quasisymmetric functions}

In \cite{GTY}, the authors generalize the construction of fundamental quasisymmetric functions by use of $P$-partitions to define the WCFQFs. In \cite{Ste}, Stembridge realizes peak quasisymmetric functions via the theory of enriched $P$-partitions. Referring to these papers, we apply the concept of enriched $P$-partitions to construct other special kinds of weak composition quasisymmetric functions here.

\subsection{Enriched $P$-partitions}
First we recall Stembridge's theory of enriched $P$-partitions in \cite{Ste}.
Now totally order $\bZ'$ so that
\[-1\prec 1\prec -2\prec 2\prec -3\prec 3\prec\cdots.\]
A partial ordering $(P,<_P)$ of a finite subset $X$ of $\bP$ is called a \textit{labeled poset} on $X$, and $P=X$ also has the natural order $<$ inheriting from $\bP$. An \textit{enriched $P$-partition} is
a map $f:P\rw\bZ'$ such that for all $x<_P y$ in $P$, we have
\begin{align*}
\mb{(i)}&\quad f(x)\preceq f(y),\\
\mb{(ii)}&\quad f(x)=f(y)>0\mb{ implies }x<y,\\
\mb{(iii)}&\quad f(x)=f(y)<0\mb{ implies }x>y.
\end{align*}
Let $\mE(P)$ denote the set of enriched $P$-partitions. In particular, if $P$ is a chain $\{w_1<_P\cdots<_Pw_n\}$, it can be identified with a one-line word $w=w_1\cdots w_n$ of positive integers.
Define its descent set $D(P):=\{i\in[n-1]\,|\,w_i>w_{i+1}\}$. Then
\[\begin{split}
\mE(P)=\{f:P\rw\bZ'\,|\,&f(w_1)\preceq\cdots \preceq f(w_n),\\
&f(w_i)=f(w_{i+1})>0\Rightarrow i\notin D(P),\\
&f(w_i)=f(w_{i+1})<0\Rightarrow i\in D(P)\}.
\end{split}\]
Given a subset $S$ of $P$, we define the \textit{weight} of $f\in\mE(P)$ to be the monomial
\[w(f,S):=\prod_{v\in P}x_{|f(v)|}^{\de_v},\mb{ where }\de_v=
\begin{cases}
0,& v\notin S,\\
1,& v\in S,
\end{cases}\]
and define the \textit{weight enumerator} for $P$ as the generating function
\[\La(P,S):=\sum_{f\in\mE(P)}w(f,S)\in\bZ[[x_1,x_2,\dots]]_\bN.\]
Let $\mL(P)$ denote the set of linear extensions of $P$ compatible with the partial order $<_P$. By \cite[Lemma 2.1]{Ste} we have  $\mE(P)=\bigcup\limits_{u\in\mL(P)}^{.}\mE(u)$, thus
\beq\label{lin}\La(P,S)=\sum_{u\in\mL(P)}\La(u,S).\eeq

Given two disjoint posets $P,Q$, the \textit{direct sum} of $P$ and $Q$ is the poset $P+Q$ on $P\cup Q$ with the partial order only retaining those of $P$ and $Q$ respectively, and the \textit{ordinal sum} of $P$ and $Q$ is the poset $P\oplus Q$ on $P\cup Q$ further demanding $p<q$ for all $p\in P,q\in Q$. When $P,Q$ are two disjoint labeled posets with $S\subseteq P,T\subseteq Q$, then $P+Q,P\oplus Q$ are labeled posets with a subset $S\cup T$.
\begin{lem}
Given two disjoint labeled posets $P,Q$ with $S\subseteq P,T\subseteq Q$,
\beq\label{mul}\La(P,S)\La(Q,T)
=\sum_{u\in\mL(P)\shuffle\mL(Q)}\La(u,S\cup T)\eeq
where $\mL(P)\shuffle\mL(Q):=\{u=x\shuffle y\,|\,x\in\mL(P),y\in\mL(Q)\}$ with $\shuffle$ as the shuffle operation of two chains.
\end{lem}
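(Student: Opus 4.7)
The plan is to route the product through the enriched $P$-partitions of the direct sum $P+Q$ and then apply the linear extension formula \eqref{lin}. Concretely, I would first observe that an enriched $(P+Q)$-partition is nothing but a pair $(f,g)$ with $f\in\mE(P)$ and $g\in\mE(Q)$, since the partial order of $P+Q$ imposes no relation between elements of $P$ and elements of $Q$, so the three defining conditions decouple completely. Moreover, the monomial weight is multiplicative on disjoint supports: for $h=f\sqcup g\in\mE(P+Q)$,
\[
w(h,S\cup T)=\prod_{v\in P}x_{|f(v)|}^{\de_v}\cdot\prod_{v\in Q}x_{|g(v)|}^{\de_v}=w(f,S)\,w(g,T).
\]
Summing over all pairs therefore yields the identity $\La(P,S)\,\La(Q,T)=\La(P+Q,S\cup T)$.

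Next I would apply \eqref{lin} to the labeled poset $P+Q$ equipped with the distinguished subset $S\cup T$, giving
\[
\La(P+Q,S\cup T)=\sum_{u\in\mL(P+Q)}\La(u,S\cup T).
\]
The final step is the purely order-theoretic identity $\mL(P+Q)=\mL(P)\shuffle\mL(Q)$: a linear extension of $P+Q$ is a total order on $P\cup Q$ compatible with $<_P$ and $<_Q$ separately, and since there are no cross-relations this is exactly a shuffle of a linear extension of $P$ with one of $Q$. Combining the three steps gives the claimed formula.

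I do not expect any serious obstacle here. The only point that requires a little care is matching the enriched-partition conditions (i)--(iii) on $P+Q$ with those on $P$ and $Q$ separately; since these conditions only constrain comparable pairs $x<_{P+Q}y$, and all such pairs lie entirely within $P$ or entirely within $Q$, the decoupling is immediate. The rest is bookkeeping: one multiplicativity of weights, one appeal to \eqref{lin}, and one description of linear extensions of a direct sum as shuffles.
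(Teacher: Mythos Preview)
Your proposal is correct and follows essentially the same route as the paper's own proof: factor the product as $\La(P+Q,S\cup T)$ via the bijection $\mE(P+Q)\cong\mE(P)\times\mE(Q)$ and multiplicativity of weights, then apply \eqref{lin} together with $\mL(P+Q)=\mL(P)\shuffle\mL(Q)$. Your write-up even spells out the decoupling of conditions (i)--(iii) a bit more carefully than the paper does.
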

\begin{proof}
First by definition, we have
\[\La(P,S)\La(Q,T)=\sum_{f\in\mE(P)\atop g\in\mE(Q)}
w(f,S)w(g,T)=\sum_{h\in\mE(P+Q)}w(h,S\cup T)=\La(P+Q,S\cup T).\]
Note that $\mL(P+Q)=\mL(P)\shuffle\mL(Q)$, hence by \eqref{lin},
\[\La(P,S)\La(Q,T)
=\sum_{u\in\mL(P+Q)}\La(u,S\cup T)
=\sum_{u\in\mL(P)\shuffle\mL(Q)}\La(u,S\cup T)\qedhere\]
\end{proof}

Now given a weak composition $\al=(0^{i_1},s_1,\dots,0^{i_k},s_k,0^{i_{k+1}})$ with all $i_p\in\bN,\,s_q\in\bP$,
one can define a labeled poset $P_\al$ on $[\|\al\|]$ as follows. Let a chain $P_\al:=C_1\oplus P_1\oplus\cdots\oplus C_k\oplus P_k\oplus C_{k+1}$, where $C_p$ and $P_p$ are chains with $i_p$ and $s_p$
elements respectively. Make $P_\al$ into a labeled poset by numbering first the elements
of $C_1,C_2,\dots,C_{k+1}$ and then the elements of $P_k, P_{k-1},\dots, P_1$, such that the labeled order is compatible with the order of $C_p$ and $P_p$. For example, if $\al=(0,1,3,0^2,2,0)$, then
$C_2=\emptyset$ and $P_\al=\{1\}\oplus\{10\}\oplus\emptyset\oplus\{7,8,9\}\oplus\{2,3\}\oplus\{5,6\}\oplus\{4\}$.
In the rest of the paper, if without further illustration, we always choose the subset $S=\cup_pP_p$ for the weight of any $f\in\mE(P_\al)$, and abbreviate $\La(P_\al,S)$ as $\La(P_\al)$.

Moreover, we introduce the following notation for convenience.
For any labeled poset $P$ on $[n]$ with $S\subseteq P$, when $P$ has a one-line word presentation as a chain, we always highlight $P\setminus S$ by putting lines over the labels in $P\setminus S$. For instance, if $\al=(0,1,2,0^2,2,0)$, we write $P_\al$ as $\ol{1}978\ol{23}56\ol{4}$, thus $\La(P_\al)$ as $\La(\ol{1}978\ol{23}56\ol{4})$.

\begin{rem}
In \cite{GTY}, the authors describe the WCFQFs $F_\al$ via $P_\al$-partitions. Under the same assumption as enriched $P$-partitions, a \textit{$P$-partition} is
a map $f:P\rw\bP$ such that for all $x<_P y$ in $P$, we have
\begin{align*}
\mb{(i)}&\quad f(x)\leq f(y),\\
\mb{(ii)}&\quad f(x)<f(y)\mb{ if }x>y.
\end{align*}
Let $\mA(P)$ denote the set of $P$-partitions. One can analogously define the generating function
\[\Ga(P,S):=\sum_{f\in\mA(P)}w(f,S)\in\bZ[[x_1,x_2,\dots]]_\bN.\]
If we abbreviate $\Ga(P_\al,S)$ as $\Ga(P_\al)$ under the default choice that $S=\cup_pP_p$, then $F_\al=\Ga(P_\al)$.
\end{rem}

\subsection{Weak peak quasisymmetric functions}
It is clear that $\La(P_\al)$ is also a homogeneous weak composition quasisymmmetric function. We called these functions $\La(P_\al)$ \textit{weak peak fundamental quasisymmetric functions} (\textit{WPFQF}).
To describe the expansions of $\La(P_\al)$ in terms of WCMQFs and also WCFQFs, we need the following indispensable notion. Given a weak composition $\al$, let
\[P(\al):=\{i\in D(\al)\cap[2,\|\al\|-1]\,|\,i-1\notin D(\al)\}\]
be its associated \textit{peak set}. Any weak composition with an empty peak set has the form
\[(\overbrace{1,\dots,1}^i,\overbrace{0,\dots,0}^j)\mb{ or }(\overbrace{1,\dots,1}^i,\overbrace{0,\dots,0}^j,k),i,j\in\bN,k\in\bP.\]


Given two weak compositions $\al,\be$, we say $\al$ can \textit{mutate to} $\be$, denoted by $\be\unlhd\al$, if one can split positive parts of $\al$ into pieces if necessary to obtain a decomposition of weak compositions
$(\al_1,\dots,\al_\ell)$, where all $\al_q\in\mW\mC$ have empty peak sets such that
splitting pieces from one positive part of $\al$ lie in different $\al_q$ and
$(|\al_1|,\dots,|\al_\ell|)=\be$ with $\ell=\ell(\be)$. Then we say this decomposition of $\al$ satisfies \textit{condition} ($*$) for $\be$. Generally such decomposition is not unique. Denote $n_{\al\be}$ the number of all decompositions of $\al$
satisfying condition ($*$) for $\be$. In particular, $n_{\al\al}=1$. Let $\emptyset$ be the only mutation of itself with $n_{\emptyset\al}=\de_{\al,\emptyset}$ for convenience. If $\al,\be$ are compositions, $n_{\al\be}\leq1$.


\begin{exam}\label{ex}
Let $\al=(0^2,4,0^2,2,0^2,1,0,1)$ and $\be=(2,1,3,0,1,1)$, then $\be\unlhd\al$ as $\al$ has three decompositions,
\begin{align*}
&(\al_1(0^2,2),\al_2(1),\al_3(1,0^2,2),\al_4(0^2),\al_5(1),\al_6(0,1)),\\
&(\al_1(0^2,2),\al_2(1),\al_3(1,0^2,2),\al_4(0^2),\al_5(1,0),\al_6(1)),\\
&(\al_1(0^2,2),\al_2(1),\al_3(1,0^2,2),\al_4(0),\al_5(0,1),\al_6(0,1)),
\end{align*}
satisfying condition ($*$) for $\be$, and $n_{\al\be}=3$.
\end{exam}

Now we are in the position to give the first characterization of WPFQFs.
\begin{prop}\label{p1}
For any weak composition $\al$, we have
\beq\label{PM}
\La(P_\al)=\sum_{\be\unlhd\al}n_{\al\be}2^{\ell(\be)}M_\be.
\eeq
\end{prop}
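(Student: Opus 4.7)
The plan is to unfold $\La(P_\al)=\sum_{f\in\mE(P_\al)}w(f,S)$ and regroup the enriched $P$-partitions by their ``level structure''. Since $f$ is $\prec$-weakly increasing on the chain $P_\al$, its absolute value $|f|$ is weakly increasing in the usual order, and the level sets $B_q:=|f|^{-1}(i_q)$ (with $i_1<\cdots<i_\ell$ the distinct values of $|f|$) form a decomposition of $P_\al$ into $\ell$ consecutive blocks. Within each block, the relation $-i_q\prec i_q$ forces a split into a $-i_q$-prefix followed by a $+i_q$-suffix, encoded by a cut point $m_q\in\{0,\dots,|B_q|\}$. Thus each $f$ is parametrized by the triple $((B_q),(i_q),(m_q))$, with $w(f,S)=\prod_q x_{i_q}^{b_q}$ where $b_q:=|B_q\cap S|$ counts the weighted positions in $B_q$; summing over $i_1<\cdots<i_\ell$ produces $M_{(b_1,\dots,b_\ell)}$.

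Next I would count valid cut points for a fixed block decomposition. Conditions (ii)--(iii) for enriched $P$-partitions translate into: labels strictly decrease on the negative prefix of $B_q$ and strictly increase on the positive suffix. Writing $D_{B_q}\subseteq\{1,\dots,|B_q|-1\}$ for the set of local chain-descent positions (places where the label drops from local position $j$ to $j+1$ inside $B_q$), this is equivalent to $\{1,\dots,m_q-1\}\subseteq D_{B_q}\subseteq\{1,\dots,m_q\}$. Such an $m_q$ exists iff $D_{B_q}$ is an initial interval $\{1,\dots,r\}$, in which case exactly two choices ($m_q\in\{r,r+1\}$) work. So each admissible block decomposition contributes a factor of $2^\ell$ to the sign count.

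The third step is to identify block decompositions of $P_\al$ with mutation-decompositions of $\al$. Reading off the entries of $\al$ whose positions lie in each $B_q$ (and splitting a positive part of $\al$ whenever a block boundary falls inside it) produces a sequence of nonempty weak compositions $(\al_1,\dots,\al_\ell)$, which by construction is a mutation-decomposition of $\al$ with $(|\al_1|,\dots,|\al_\ell|)=\be$; these are counted by $n_{\al\be}$. The crucial identity $D_{B_q}=D(\al_q)\cap[1,\|\al_q\|-1]$, together with the elementary observation that $D\cap[1,N-1]$ is an initial interval iff the corresponding weak composition has empty peak set, identifies ``$(B_q)$ admits valid splits'' with ``each $\al_q$ has empty peak set'' -- exactly the mutation constraint. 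Collecting the contributions gives $\La(P_\al)=\sum_\be n_{\al\be}\cdot 2^{\ell(\be)}\cdot M_\be$.

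The main obstacle is verifying the descent identity $D_{B_q}=D(\al_q)\cap[1,\|\al_q\|-1]$. Chain descents occur precisely at endpoints of $\al$'s full positive $P$-units (with a following position in the chain), whereas $D(\al_q)$ records endpoints of $\al_q$'s positive entries, which may be \emph{pieces} of positive parts of $\al$ split at block boundaries. The delicate check is that any split-piece endpoint of $\al_q$ that does not correspond to a genuine $\al$-$P$-unit endpoint sits at local position $\|\al_q\|$, hence outside the range $[1,\|\al_q\|-1]$; the restricted descent set therefore matches $D_{B_q}$ exactly. Once this bookkeeping is in place, the equivalence with the empty-peak condition is immediate.
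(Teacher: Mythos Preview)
Your proof is correct and follows the same underlying strategy as the paper: both group enriched $P_\al$-partitions by the level sets of $|f|$, observe that each block contributes a factor of~$2$ from the sign/cut choice, and identify the admissible block structures with decompositions of $\al$ satisfying condition~$(*)$. The difference is in how the block constraint is verified. The paper carries out an explicit case analysis (cases (0)--(3)), classifying intervals by which portions of the $C_p$'s and $P_p$'s they occupy and then observing that each such interval realizes a weak composition with empty peak set. You instead give a uniform characterization: the local descent set $D_{B_q}$ must be an initial segment of $[1,|B_q|-1]$ (equivalent to the existence of exactly two cut points), and you identify this directly with $P(\al_q)=\emptyset$ via the elementary equivalence $P(\ga)=\emptyset \iff D(\ga)\cap[1,\|\ga\|-1]$ is an initial interval. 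This bypasses the case split and makes the appearance of the mutation condition more transparent. The delicate bookkeeping you flag---that a split-piece endpoint of $\al_q$ not matching a genuine chain descent necessarily sits at local position $\|\al_q\|$ and is therefore excluded from the comparison---is exactly the point that needs checking, and your handling of it is correct.
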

\begin{proof}
Fix $\al=(0^{i_1},s_1,\dots,0^{i_k},s_k,0^{i_{k+1}})$ with all $i_l\in\bN$ and $s_p\in\bP$. For any weak composition $\be=(0^{j_1},t_1,\dots,0^{j_r},t_r,0^{j_{r+1}})$  with all $j_l\in\bN$ and $t_q\in\bP$, let $l_q:=j_1+\cdots+j_q,\,q=1,\dots,r+1$.
First note that the coefficient of $M_\be$ in $\La(P_\al)$ is also that of the monomial
\beq\label{mon}
x_1^0\cdots x_{l_1}^0x_{l_1+1}^{t_1}x_{l_1+2}^0\cdots x_{l_2+1}^0x_{l_2+2}^{t_2}\cdots x_{l_{r-1}+r}^0\cdots x_{l_r+r-1}^0 x_{l_r+r}^{t_r}x_{l_r+r+1}^0\cdots x_{l_{r+1}+r}^0 \eeq
in $\La(P_\al)$. By the definition of enriched $P_\al$-partitions, any factor $x_l^0$ in such monomial must come from

(0) the weight on an interval in one $C_p$. Namely, $\prod\limits_{i=1}^n x_{|f(w_i)|}^0=x_l^0,\,n\in\bP$,  where $\{w_1,\dots,w_n\}$ forms an interval in $C_p$, and $(f(w_1),\dots,f(w_n))=(\pm l,\overbrace{l,\dots,l}^{n-1})$.

Meanwhile, any factor $x_{l_q+q}^{t_q}$ must belong to one of the following three cases:

(1) the weight on an interval in one $C_p\oplus P_p$, $\prod\limits_{i=1}^m x_{|f(u_i)|}^0
\prod\limits_{j=1}^{t_q} x_{|f(v_j)|}=x_{l_q+q}^{t_q},\,m\in\bN$, where $\{u_1,\dots,u_m\}\oplus\{v_1,\dots,v_{t_q}\}$ forms an interval in $C_p\oplus P_p$, and \[(f(u_1),\dots,f(u_m),f(v_1),\dots,f(v_{t_q}))=(\pm(l_q+q),\overbrace{l_q+q,\dots,l_q+q}^{m+t_q-1}).\]

(2) the weight on an interval in one $P_{p-t_q}\oplus\cdots\oplus P_{p-1}\oplus C_p$ with $C_{p-t_q+1},\dots,C_{p-1}=\emptyset$ and $P_{p-t_q+1},\dots,P_{p-1}$ are singleton sets, $\prod\limits_{j=1}^{t_q} x_{|f(v_j)|}\prod\limits_{i=1}^m x_{|f(u_i)|}^0=x_{l_q+q}^{t_q},\,m\in\bN$, where $\{v_1\}\oplus\cdots\oplus\{v_{t_q}\}\oplus\{u_1,\dots,u_m\}$ forms an interval in $P_{p-t_q}\oplus\cdots\oplus P_{p-1}\oplus C_p$, and
\[\begin{split}
(f(v_1)&,\dots, f(v_{t_q}),f(u_1),\dots,f(u_m))\\
&=(\overbrace{-(l_q+q),\dots,-(l_q+q)}^{t_q},
\pm(l_q+q),\overbrace{l_q+q,\dots,l_q+q}^{m-1}).
\end{split}\]

(3) the weight on an interval in one $P_{p-n}\oplus\cdots\oplus P_{p-1}\oplus C_p\oplus P_p$ with $C_{p-n+1},\dots,C_{p-1}=\emptyset$ and $P_{p-n+1},\dots,P_{p-1}$ are singleton sets, $\prod\limits_{j=1}^n x_{|f(w_j)|}\prod\limits_{i=1}^m x_{|f(u_i)|}^0\prod\limits_{j=1}^{t_q-n} x_{|f(v_j)|}=x_{l_q+q}^{t_q},\,m\in\bN,n\in[t_q-1]$,  where $\{w_1\}\oplus\cdots\oplus\{w_n\}\oplus\{u_1,\dots,u_m\}\oplus\{v_1,\dots,v_{t_q-n}\}$ forms an interval in $P_{p-n}\oplus\cdots\oplus P_{p-1}\oplus C_p\oplus P_p$, and
\[\begin{split}
(f(w_1)&,\dots, f(w_n),f(u_1),\dots,f(u_m),f(v_1),\dots,f(v_{t_q-n}))\\
&=(\overbrace{-(l_q+q),\dots,-(l_q+q)}^n,
\pm(l_q+q),\overbrace{l_q+q,\dots,l_q+q}^{m+t_q-n-1}).
\end{split}\]

Note that intervals in $P_\al$ mentioned in cases (0)--(3) clearly represent weak compositions with empty peak sets. By the analysis above, if the coefficient of $M_\be$ in $\La(P_\al)$ is nonzero, $\al$ should have decompositions of weak compositions satisfying condition ($*$) for $\be$. That is $\be\unlhd\al$.

To obtain the expansion of $\La(P_\al)$ in terms of WCMQFs, we first note that monomial \eqref{mon} in $\La(P_\al)$ can be obtained by sequentially piecing together $\ell(\be)$ weights from cases (0)--(3) above when $\be\unlhd\al$. For each of these weights, one should select a sign in $\pm$ to fix the value of $f$. Also such approach is not necessarily unique. By definition the number of ways merging weights to derive monomial \eqref{mon} is just $n_{\al\be}$. Hence we obtain the expansion formula \eqref{PM}.
\end{proof}

Next we describe the number $n_{\al\be}$ more explicitly by use of the notation in Prop. \ref{p1}. Indeed from another perspective, the number $n_{\al\be}$ counts how many kinds of distributions of weights on $C_p,\,p=1,\dots,k+1$, into weights in cases (0)--(3) contributing to monomial \eqref{mon}. There mainly appear four cases.

(i) If the weight on $C_p$ splits into parts in (0) and (1) (resp. (2) and (0)) successively for the weight $x_{l_{q-1}+q}^0\cdots x_{l_q+q-1}^0 x_{l_q+q}^{t_q}$ (resp. $x_{l_{q-1}+q-1}^{t_{q-1}}x_{l_{q-1}+q}^0\cdots x_{l_q+q-1}^0$), then it has $i_p \choose j_q$ choices.

(ii) If the weight on $C_p$ splits into parts in (2), (0) and (1) successively for the weight $x_{l_{q-1}+q-1}^{t_{q-1}}x_{l_{q-1}+q}^0\cdots x_{l_q+q-1}^0 x_{l_q+q}^{t_q}$, then it has $i_p+1\choose j_q+1$ choices.

(iii) If the weight on $C_p$ distributes to that in (0) for the weight $x_{l_{q-1}+q}^0\cdots x_{l_q+q-1}^0$, then it has $i_p-1\choose j_q-1$ choices.

(iv) If the weight on $C_p$ distributes to that in (3) for the weight $x_{l_q+q}^{t_q}$, the choice is unique.

However, there exists an exceptional case. If $P_{p-n+1},\dots,P_{p}$ are singleton sets and the distribution of weight on $C_{p-n+1}\oplus P_{p-n+1}\oplus\cdots\oplus C_p\oplus P_p\oplus C_{p+1}$ is for the weight
\[x_{l_{q-n}+q-n+1}^0\cdots x_{l_{q-n+1}+q-n}^0 x_{l_{q-n+1}+q-n+1}\cdots x_{l_{q-1}+q}^0\cdots x_{l_q+q-1}^0 x_{l_q+q}x_{l_q+q+1}^0\cdots x_{l_{q+1}+q}^0,\]
it has
\beq\label{11}
\sum_{\ep_{q-n+1},\dots,\ep_q\in\{0,1\}}{i_{p-n+1}-1 \choose j_{q-n+1}-\ep_{q-n+1}}
{i_{p-n+2}-1+\ep_{q-n+1} \choose j_{q-n+2}-\ep_{q-n+2}+\ep_{q-n+1}}
\cdots{i_p-1+\ep_{q-1} \choose j_q-\ep_q+\ep_{q-1}}
{i_{p+1}-1+\ep_q\choose j_{q+1}-1+\ep_q}
\eeq
choices. In fact, for any $i=1,\dots,n$, when $\ep_{q-i+1}=0$ it means that if $C_{p-i+1}\neq\emptyset$, the last element in $C_{p-i+1}$ contributes to $x_{l_{q-i+1}+q-i+1}^0$, which combines into the $x_{l_{q-i+1}+q-i+1}$ from $P_{p-i+1}$, so the first element in $C_{p-i+2}$ can not take such weight by the constraint of enriched $P_\al$-partitions. On the other hand, $\ep_{q-i+1}=1$ means that the last element in $C_{p-i+1}$ contributes to $x_{l_{q-i+1}+q-i}^0$ instead, and the weight from the first element in $C_{p-i+2}$ can be $x_{l_{q-i+1}+q-i+1}^0$. In summary, as $C_{p-i+1}$ should provide $x_{l_{q-i}+q-i+1}^0\cdots x_{l_{q-i+1}+q-i}^0$ and $P_{p-i+1}$ for $x_{l_{q-i+1}+q-i+1}$, we get the desired number of choices.

Note that formula \eqref{11} can be extended to involve the case (iv), namely some $P_{p-r-1}\oplus C_{p-r}\oplus P_{p-r}$, $r=0,\dots,n-2$, merge to provide one $x_l^2$. In this situation, we set $j_{q-r}=-1$ in \eqref{11} by convenience, together with $\ep_{q-r-1}=1$ and $\ep_{q-r}=0$, to imply such merging. It gives $i_{q-r}-1+\ep_{q-r-1}\choose j_{q-r}-\ep_{q-r}+\ep_{q-r-1}$=1, thus the accurate number $n_{\al\be}$. For example, $\al=(0,1,0^2,1,0,1,0,2)\unlhd \be=(0,2,1,2)$. Write $\be=(0,1,0^{-1},1,0^0,1,0^0,2)$, hence $n_{\al\be}={1-1\choose1-1}{2-1+1\choose-1-0+1}
{1-1+0\choose0-0+0}^2=1$.


\begin{exam}
For $\al=(0^{i_1},1,0^{i_2},2,0^{i_3},1)$, there exist the following weak compositions $\unlhd\al$,
\[\begin{split}
&\al_{j_1j_2j_3}=(0^{j_1},1,0^{j_2},2,0^{j_3},1),\,
\be_{j_1j_2j_3}=(0^{j_1},1,0^{j_2},1^2,0^{j_3},1),\\
&\ga_{j_1j_2}=(0^{j_1},1,0^{j_2},1,2),\,\eta_{j_1j_3}=(0^{j_1},2,1,0^{j_3},1),\\
&\xi_{j_1j_3}=(0^{j_1},3,0^{j_3},1),\,\zeta_{j_1}=(0^{j_1},2^2)
\end{split}\]
with $j_p\in\{0,1,\dots,i_p\}$ for $p=1,2,3$. Then
\[\begin{split}
\La(P_\al)&=\sum_{j_1,j_2,j_3}\lb{i_1-1\choose j_1-1}{i_2+1\choose j_2+1}+{i_1-1\choose j_1}{i_2\choose j_2}\rb\\
&\lb{i_3\choose j_3}2^{\ell(\al_{j_1j_2j_3})}M_{\al_{j_1j_2j_3}}+{i_3+1\choose j_3+1}2^{\ell(\be_{j_1j_2j_3})}M_{\be_{j_1j_2j_3}}+
2^{\ell(\ga_{j_1j_2})}M_{\ga_{j_1j_2}}\rb\\
&+{i_1-1\choose j_1-1}\lb{i_3+1\choose j_3+1}2^{\ell(\eta_{j_1j_3})}M_{\eta_{j_1j_3}}
+{i_3\choose j_3}2^{\ell(\xi_{j_1j_3})}M_{\xi_{j_1j_3}}
+2^{\ell(\zeta_{j_1})}M_{\zeta_{j_1}}\rb.
\end{split}\]

\end{exam}

\begin{lem}\label{eq}
Given two weak compositions $\al,\be$, $\La(P_\al)=\La(P_\be)$ if and only if
\[\al=(0^{i_1},\mu_1,\dots,0^{i_k},\mu_k,0^{i_{k+1}}),\,\be=(0^{i_1},\nu_1,\dots,0^{i_k},\nu_k,0^{i_{k+1}}),\]
where $i_1,\dots,i_{k+1}\in\bN$, $\mu_q,\nu_q$ are two compositions of the same weight for $q=1,\dots,k$ and $P(\al)=P(\be)$.
\end{lem}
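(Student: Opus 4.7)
The strategy is to exploit the $M$-expansion from Proposition \ref{p1},
\[\La(P_\al)=\sum_{\ga\unlhd\al}n_{\al\ga}2^{\ell(\ga)}M_\ga,\]
together with the fact that $\{M_\ga\}_{\ga\in\mW\mC}$ is a $\bZ$-basis of WCQSym, so that the lemma reduces to the claim: $n_{\al\ga}=n_{\be\ga}$ for every weak composition $\ga$ if and only if $\al,\be$ have the stated form.

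For the $(\Leftarrow)$ direction, I would fix a target $\ga$ and bijectively match the decompositions of $\al$ satisfying condition~($*$) for $\ga$ with those of $\be$. Any such decomposition factors through two independent choices: (1) distributing the zeros inside each block $C_p$ among consecutive sub-weak-compositions, and (2) cutting each positive chunk $\mu_q$ (resp.\ $\nu_q$) into a sequence of empty-peak pieces of prescribed weights. Choice~(1) is identical on both sides because the $i_p$'s agree. For choice~(2), the classification of empty-peak weak compositions as $(1^i,0^j)$ and $(1^i,0^j,k)$ recalled before Proposition~\ref{p1} forces every piece arising from cutting $\mu_q$ to be of the form $(1^i)$ or $(1^i,k)$, and the number of such cuttings depends only on $|\mu_q|$ and on the peak set $P(\mu_q)$. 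The hypothesis $P(\al)=P(\be)$ translates, because peaks of $\al$ inside the $q$-th positive block coincide (after shifting) with peaks of $\mu_q$, into $P(\mu_q)=P(\nu_q)$ for every $q$. Multiplying the resulting chunk-wise equalities then yields $n_{\al\ga}=n_{\be\ga}$.

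For the $(\Rightarrow)$ direction, assume $\La(P_\al)=\La(P_\be)$, so $n_{\al\ga}=n_{\be\ga}$ for every $\ga$. Taking $\ga=\al$ gives $n_{\be\al}\geq n_{\al\al}=1$, hence $\al\unlhd\be$, and by symmetry $\be\unlhd\al$; combining these mutations forces identical zero patterns (the common $i_p$'s) and identical chunk weights ($|\mu_q|=|\nu_q|$). To pin down the common peak set, I would probe with finer mutations $\ga$ refining a single chunk at a time, and read off $P(\mu_q)$ (respectively $P(\nu_q)$) from the corresponding coefficient, exactly reversing the $(\Leftarrow)$ analysis.

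The main technical hurdle is step~(2) of the $(\Leftarrow)$ direction, namely showing that the count of valid cuttings of $\mu_q$ into empty-peak pieces of prescribed weights is an invariant of the pair $(|\mu_q|,P(\mu_q))$. I expect this to follow by transporting Stembridge's sign-reversing involution, used in the classical rigidity theorem $K_\mu=K_{P(\mu)}$, to the labeled sub-chain $P_{\mu_q}\subset P_\al$, whose labeling pattern is controlled entirely by $\mu_q$; the key point is that the involution preserves the multiset of absolute values of $f$ on $P_{\mu_q}$, so the interface conditions with the surrounding $C_p$-blocks are automatically maintained.
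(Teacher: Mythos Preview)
Your overall approach---reduce to comparing the coefficients $n_{\al\ga}$ via Proposition~\ref{p1} and the linear independence of $\{M_\ga\}$---coincides with the paper's. The paper's proof is extremely terse on the $(\Leftarrow)$ direction, essentially asserting that the stated condition ``certainly ensures'' $n_{\al\ga}=n_{\be\ga}$ for all $\ga$; your plan attempts to spell this out, but two of the details are off.

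First, the factorization into \emph{independent} choices (1) and (2) does not hold as stated: a single empty-peak block $\al_j=(1^a,0^b,k)$ can straddle the tail of one positive chunk $\mu_q$, an entire zero block $0^{i_{q+1}}$, and the head of the next chunk $\mu_{q+1}$, so the zero-distribution and the chunk-cutting are entangled rather than independent. The cleaner observation---implicit in the paper's one-line argument---is that a decomposition satisfying condition~($*$) is insensitive to the internal composition of each $\mu_q$ once $|\mu_q|$ and the peak positions are fixed: the constraint that split pieces from a single positive part of $\al$ lie in \emph{different} blocks means that within any block the contribution from $\mu_q$ is determined purely by the total weight taken and by its position (head, middle, tail) in the range of $\mu_q$, not by how $\mu_q$ is internally partitioned.

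Second, invoking ``Stembridge's sign-reversing involution'' is misplaced: there is no cancellation to perform here. For compositions the relation $\ga\unlhd\mu$ is equivalent to $P(\mu)\subseteq D(\ga)\cup(D(\ga)+1)$, as the paper notes at the start of \S3, and this visibly depends only on $P(\mu)$. That elementary fact already handles your step~(2) once the interaction with the zero blocks is described correctly; no involution from the theory of enriched $P$-partitions is needed.

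Your $(\Rightarrow)$ sketch---deduce $\al\unlhd\be$ and $\be\unlhd\al$ from $n_{\be\al},n_{\al\be}\geq1$, read off the common zero pattern and chunk weights, then recover $P(\al)=P(\be)$ from equality of all mutation counts---matches the paper's argument closely.
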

\begin{proof}
The given condition above is equivalent to say that $\al$ and $\be$ only differ by some splitting of positive parts preserving that $P(\al)=P(\be)$. It certainly ensures that $\ga\unlhd\al$ if and only if $\ga\unlhd\be$ and $n_{\al\ga}=n_{\be\ga}$, thus $\La(P_\al)=\La(P_\be)$ by formula \eqref{PM}.
Conversely, since $M_\ga$'s are linearly independent, $\La(P_\al)=\La(P_\be)$ forces that
$\ga\unlhd\al$ if and only if $\ga\unlhd\be$. In particular,
$\al\unlhd\be$ and $\be\unlhd\al$. Hence, every interval of zeros in $\be$ appears as a contraction of exactly one $0^{i_p}$ in $\al$, and vice versa. It implies that $\al,\be$ have the same intervals $0^{i_p}$ of zeros as stated above, and $\al,\be$ differ by some splitting of positive parts, thus $|\mu_q|=|\nu_q|$ for $q=1,\dots,k$. Moreover, $\La(P_\al)=\La(P_\be)$ immediately implies that there exists a bijection between
decompositions $(\al_1,\dots,\al_\ell)$ of $\al$ into weak compositions with empty peak sets
and those decompositions $(\be_1,\dots,\be_\ell)$ of $\be$ such that
\[(|\al_1|,\dots,|\al_\ell|)=(|\be_1|,\dots,|\be_\ell|)\unlhd\al,\be.\]
It guarantees that $P(\al)=P(\be)$.
\end{proof}
For example, if $\al=(0^2,3,0^2,2.,0^2,1,1)$ and $\be=(0^2,1,2,0^2,2,0^2,2)$, $\La(P_\al)=\La(P_\be)$. Here it is necessary to require that each pair of $\al_q,\be_q$ are of the same weight. Indeed, there exist weak compositions with the same intervals of zeros and peak set but different functions, such as $(0,2,1,0^2,1,1)$ and $(0,2,0^2,2,1)$ both with peak set $\{3,7\}$.

Given two weak compositions $\al,\be$ such that $\be\unlhd\al$, we write $\be\Vdash\al$, if no positive part of $\al$ splits into pieces with the last one becoming a part 1 of $\be$ followed by a positive part. For instance, if $\al=(0^2,5,0^2,2),\,\be=(0,2^2,1,0,2)$ and $\ga=(0,2^2,1,2)$, we have $\be\Vdash\al$ and $\ga\unlhd\al$ but $\ga\nVdash\al$, as one has to split the $5$ in $\al$ into $2^2,1$ with the last $1$ for the $1$ in $\be$ left to a $0$, but for the $1$ in $\ga$ left to $2$.

Next we consider the expansion of $\La(P_\al)$ in terms of WCFQFs. Unlike the classical case \eqref{ka}, such expansion is no longer positive in general. For instance, $\La(P_{01})=4M_{01}+2M_1=4F_{01}-2F_1$ as $F_1=M_1$ and $F_{01}=M_1+M_{01}$.
\begin{prop}\label{p2}
For any weak composition $\al$, we have
\beq\label{PF}
\La(P_\al)=\sum_{\be\Vdash\al}t_{\al\be}F_\be
\eeq
where $t_{\al\be}=\sum\limits_{\be\leq\ga\unlhd\al}n_{\al\ga}c_{\ga\be}(-1)^{\ell(\be)-\ell(\ga)}2^{\ell(\ga)}$.
\end{prop}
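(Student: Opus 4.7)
The strategy is a straightforward substitution followed by a cancellation argument. First I would invoke Proposition \ref{p1} to write
\[
\Lambda(P_\al)=\sum_{\ga\unlhd\al}n_{\al\ga}2^{\ell(\ga)}M_\ga,
\]
and then expand each $M_\ga$ using the inversion formula \eqref{MF}:
\[
M_\ga=\sum_{\be\leq\ga}(-1)^{\ell(\be)-\ell(\ga)}c_{\ga\be}F_\be.
\]
Interchanging the order of summation immediately yields
\[
\Lambda(P_\al)=\sum_\be\Bigl(\sum_{\be\leq\ga\unlhd\al}n_{\al\ga}c_{\ga\be}(-1)^{\ell(\be)-\ell(\ga)}2^{\ell(\ga)}\Bigr)F_\be,
\]
so the coefficient matches $t_{\al\be}$ in the statement.

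The nontrivial work lies in showing that the sum is actually supported on those $\be$ with $\be\Vdash\al$, i.e.\ that $t_{\al\be}=0$ whenever $\be\unlhd\al$ but $\be\nVdash\al$. My plan for this cancellation is a sign-reversing involution on the indexing set $\{\ga:\be\leq\ga\unlhd\al\}$ together with the auxiliary combinatorial data counted by $n_{\al\ga}$ and $c_{\ga\be}$. Concretely, when $\be\nVdash\al$, fix the leftmost positive part $s$ of $\al$ whose every decomposition compatible with $\be$ has a final piece equal to $1$ followed by a positive part of $\be$. Given a pair $(\ga,\mathcal{D})$ consisting of $\ga$ with $\be\leq\ga\unlhd\al$ and a decomposition $\mathcal{D}$ of $\al$ witnessing $\ga\unlhd\al$, I would toggle whether the final $1$ of the split of $s$ sits in its own block of $\mathcal{D}$ or is absorbed into the block immediately to its right. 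This changes $\ell(\ga)$ by $\pm 1$ (flipping the sign $(-1)^{\ell(\be)-\ell(\ga)}$) and alters the factor $2^{\ell(\ga)}$ by a factor of $2$, while the compensating change in $n_{\al\ga}c_{\ga\be}$ should carry the matching binomial identity.

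The key step, and the most delicate, is verifying that this local move preserves the constraints $\be\leq\ga\unlhd\al$ (in particular the empty-peak-set condition on the blocks of the witnessing decomposition) and that the numerical contributions cancel exactly. Here I expect to use a Pascal-type identity of the form $\binom{i}{j}+\binom{i}{j-1}=\binom{i+1}{j}$ together with the explicit form of $c_{\ga\be}$ from \eqref{FM}, matched against the case analysis (i)--(iv) (and the exceptional formula \eqref{11}) already carried out in the proof of Proposition \ref{p1}. Once the involution is shown to be well-defined, fixed-point-free on the non-$\Vdash$ part of the index set, and sign-reversing, the restriction of the summation to $\be\Vdash\al$ follows, completing the proof.
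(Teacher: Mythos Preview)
Your substitution step is exactly what the paper does: combine Proposition~\ref{p1} with \eqref{MF} and reindex to obtain the coefficient $t_{\al\be}$. So the first half is fine and identical to the paper's argument.

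The divergence is in how the vanishing $t_{\al\be}=0$ for $\be\unlhd\al$, $\be\nVdash\al$ is established. The paper does \emph{not} build an involution. Instead it observes that the quantities $n_{\al\ga}$, $c_{\ga\be}$, $2^{\ell(\ga)}$, $(-1)^{\ell(\be)-\ell(\ga)}$ all factor over the successive ``local'' pieces of $\al$, so $t_{\al\be}$ itself is (up to harmless global factors) a product of local contributions. Hence it suffices to show the local factor vanishes in the single model configuration $\al=(m,0^r,n)$, $\be=(m-1,1,n)$ with $m\ge 2$. There all $\ga$ with $\be\le\ga\unlhd\al$ are listed explicitly as $(m,0^j,n)$, $(m-1,1,0^k,n)$, $(m-1,n+1)$, and a three-line binomial-theorem computation gives $t_{\al\be}=0$. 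This is both shorter and more transparent than a global involution.

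Your involution plan, as written, has a real obstruction. The toggle you describe changes $\ell(\ga)$ by $\pm1$, so the weight changes by a factor of $-2$, not $-1$; you then hope that $n_{\al\ga}c_{\ga\be}$ absorbs the stray factor of $2$. In the paper's model case this simply does not happen: for $\ga=(m,0^j,n)$ one has $n_{\al\ga}=\binom{r}{j}$, while for $\ga=(m-1,1,0^k,n)$ one has $n_{\al\ga}=\binom{r+1}{k+1}$, and $c_{\ga\be}=1$ in both families. No pairing of these by your ``separate/absorb the trailing $1$'' move yields a ratio of $2$; worse, merging the split piece $1$ back with the $m-1$ to its left is forbidden by the rule that pieces of a single positive part of $\al$ must lie in distinct blocks, so one direction of your toggle is not even available. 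If you want to salvage an involutive proof you would need to enlarge the index set (e.g.\ also unpack the $2^{\ell(\ga)}$ as sign choices and the $c_{\ga\be}$ as explicit subset choices) before any sign-reversing map can be defined. That is a substantially different and longer argument than what you sketched; the paper's local-computation route avoids all of this.
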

\begin{proof}
First by definition, it is clear that given two $\al,\be\in\mW\mC$,
$\be\unlhd\al$ if there exists $\ga\in\mW\mC$ such that $\be\leq\ga\unlhd\al$.
Then by formulas \eqref{MF} and \eqref{PM},
\[\begin{split}
\La(P_\al)&=\sum_{\ga\unlhd\al}n_{\al\ga}2^{\ell(\ga)}M_\ga
=\sum_{\ga\unlhd\al}n_{\al\ga}2^{\ell(\ga)}\lb
\sum_{\be\leq\ga}(-1)^{\ell(\be)-\ell(\ga)}c_{\ga\be}F_\be\rb\\
&=\sum_{\be\unlhd\al}\lb\sum_{\be\leq\ga\unlhd\al}n_{\al\ga}c_{\ga\be}(-1)^{\ell(\be)-\ell(\ga)}
2^{\ell(\ga)}\rb F_\be\triangleq\sum_{\be\unlhd\al}t_{\al\be}F_\be.
\end{split}\]

Now we only need to show that the coefficient $t_{\al\be}$ vanishes if $\be\unlhd\al$ but $\be\nVdash\al$, i.e. there exists a splitting of a positive part in $\al$ with the last piece for a part 1 in $\be$ followed by a positive one. By the definition of $t_{\al\be}$, it is clearly enough to deal with the special case
when $\al=(m,0^r,n)$ and $\be=(m-1,1,n)$ with $m\geq2,n\geq1,r\geq0$, as which it is nearly the same to check the other case when $\al=(1,0^p,m,0^q,n)$ and $\be=(m,1,n)$ with $m\geq2,n\geq1,p,q\geq0$.

For $\al=(m,0^r,n)$ and $\be=(m-1,1,n)$, all $\ga$'s satisfying $\be\leq\ga\unlhd\al$ are as follows,
\[(m,0^j,n),(m-1,1,0^k,n),(m-1,n+1),\,0\leq j,k\leq r.\]
Hence we have
\[\begin{split}
t_{\al\be}&=\sum_{j=0}^r{r\choose j}(-1)^{j-1}2^{j+2}+
\sum_{k=0}^r{r+1\choose k+1}(-1)^k2^{k+3}+(-1)^{3-2}2^2\\
&=\sum_{j=0}^r{r\choose j}(-1)^{j-1}2^{j+2}+
\sum_{k=0}^r\lb{r\choose k}+{r\choose k+1}\rb(-1)^k2^{k+3}-4\\
&=\sum_{j=0}^r{r\choose j}(-1)^j2^{j+2}+
\sum_{k=1}^r{r\choose k}(-1)^{k-1}2^{k+2}-4=2^2-4=0.
\end{split}\]
In summary, we finally prove that $\La(P_\al)=\sum\limits_{\be\Vdash\al}t_{\al\be}F_\be$.
\end{proof}
Though the coefficients $t_{\al\be}$ are not always positive, we still wonder a cancellation-free description for them, which is mysterious to us.


\begin{exam}
For $\al=(2,0^r,1)$, we have
\begin{align*}
&(2,0^i,1)\leq(2,0^j,1)\unlhd(2,0^r,1),\\
&(1^3),(1,2)\leq(1,2)\unlhd(2,0^r,1),\\
&(1^2,0^i,1)\leq(2,0^j,1),(1^2,0^k,1)\unlhd(2,0^r,1),\,0\leq i\leq j,k\leq r,
\end{align*}
thus by formula \eqref{PF},
\[\begin{split}
\La(P_{20^r1})&=\sum_{i=0}^r\lb\sum_{j=i}^r{r\choose j}{j\choose i}(-1)^{j-i}2^{j+2}\rb F_{20^i1}
+4F_{12}\\
&+\sum_{i=0}^r\lb\sum_{j=i}^r{r\choose j}{j\choose i}(-1)^{j-i-1}2^{j+2}+
\sum_{k=i}^r{r+1\choose k+1}{k\choose i}(-1)^{k-i}2^{k+3}-4\de_{i0}\rb F_{1^20^i1}.
\end{split}\]
In particular, $\La(P_{20^31})=32F_{20^31}-48F_{20^21}+24F_{201}-4F_{21}+32F_{1^20^31}-16F_{1^20^21}+8F_{1^201}+4F_{12}$.
\end{exam}

\section{The algebra of weak peak quasisymmetric functions}
Let WPQSym$_n$ be the submodule of WCQSym$_n$ spanned by $\{\La(P_\al)\}_{\al\in\mW\mC(n)}$.
Denote WPQSym$=\bigoplus_{n\geq0}$WPQSym$_n$.
When $\al$ is a composition, $\be\unlhd\al$ (resp. $\ga\Vdash\al$) if and only if $\be$ (resp. $\ga$) is a composition satisfying $P(\al)\subseteq D(\be)\cup(D(\be)+1)$ (resp. $P(\al)\subseteq D(\ga)\triangle(D(\ga)+1)$), where $\triangle$ denotes the symmetric difference. As a result, $\La(P_\al)$ recovers Stembridge's \textit{peak functions} $K_{P(\al)}$ defined in \cite{Ste}. From now on, we let $K_\al:=\La(P_\al)$ for any $\al\in\mW\mC$ with $K_\emptyset=1$, generalizing such classical peak functions.

There is another difference from the classical case, namely, the multiplication of two WCFQFs is not always a positive expansion in terms of WCFQFs. In fact, for two weak compositions $\al,\be$,
\beq\label{muf}F_\al\cdot F_\be=\Ga(P_\al)\cdot\Ga(P_\be)=\sum_{u\in(P_\al\sim\be)\shuffle(P_\be\smile\al)}\Ga(u),\eeq
where we denote $P_\al\sim\be$ as a copy of $P_\al$ with all labels in $P_q$'s shifting $\ell_0(\be)$, and $P_\be\smile\al$ as a copy of $P_\be$ with all labels in $C_p$'s (resp. $P_q$'s) shifting $\ell_0(\al)$ (resp. $\|\al\|$). For any  $u\in(P_\al\sim\be)\shuffle(P_\be\smile\al)$, $\Ga(u)$ can be written as a $\bZ$-linear combination of the WPFQFs but not always positive.

\begin{rem}
For any composition $\al$ of $n$, the function $\Ga(P_\al,\emptyset)$ has the following expression,
\beq\label{gm}
\Ga(P_\al,\emptyset)=\sum_{j=\ell}^n
{n-\ell\choose j-\ell}M_{0^j},
\eeq
where $\ell=\ell(\al)$. Moreover, $\Ga(P_\al,\emptyset)$ can be $\bZ$-linearly expanded by $F_{0^j}$'s as follows.
\beq\label{gf}
\Ga(P_\al,\emptyset)=\sum_{j=0}^{\ell-1}(-1)^j{\ell-1 \choose j}F_{0^{n-j}}.
\eeq
\end{rem}
As a matter of fact, we can use \eqref{muf} and \eqref{gf} to obtain the expansion of $F_\al\cdot F_\be$ in terms of WCFQFs. For $\al=(1,0),\,\be=(0^2,1)$, we have $P_\al\sim\be=4\ol{1},\,P_\be\smile\al=\ol{23}5$, and
\begin{align*}
&F_{10}\cdot F_{0^21}=\sum_{u\in4\ol{1}\shuffle\ol{23}5}\Ga(u)=
\Ga(4\ol{123}5)+\Ga(4\ol{213}5)+\Ga(4\ol{231}5)+\Ga(4\ol{23}5\ol{1})\\
&\quad+\Ga(\ol{2}4\ol{13}5)+\Ga(\ol{2}4\ol{31}5)+\Ga(\ol{2}4\ol{3}5\ol{1})+\Ga(\ol{23}4\ol{1}5)
+\Ga(\ol{23}45\ol{1})+\Ga(\ol{23}54\ol{1})\\
&\quad=3F_{10^31}-2F_{10^21}
+F_{10^210}+2F_{010^21}-F_{0101}+F_{01010}
+F_{0^2101}+F_{0^220}+F_{0^21^20}.
\end{align*}

In particular, WCQSym has a subalgebra spanned by $F_{0^r},\,\forall r\in\bN$, with $F_{0^0}:=F_\emptyset=1$, and we denote it by WCQSym$^0$.
\begin{lem}\label{mufz}
Given $m,n\in\bP$, we have
\[F_{0^m}\cdot F_{0^n}=\sum_{j=0}^m(-1)^j{m\choose j}{m+n-j\choose m}F_{0^{m+n-j}}\]
\end{lem}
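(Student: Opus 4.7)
The plan is to compute the product via the shuffle-type multiplication identity for the ordinary $P$-partition generating functions $\Ga$, which is the exact analog of Lemma~1.1 for $\La$ and is proved by the same disjoint-union argument. Since $P_{0^r}$ is simply the chain with labels $1<2<\cdots<r$ and empty weighted subset, we have $F_{0^r}=\Ga(P_{0^r},\emptyset)$. After the relabeling $\sim,\smile$ described in the paper, the shuffle formula gives
\[F_{0^m}\cdot F_{0^n}=\sum_{u\in P_{0^m}\shuffle(P_{0^n}\smile 0^m)}\Ga(u,\emptyset),\]
where each $u$ is a labeled chain of length $m+n$ interleaving a ``low'' subchain carrying labels $[m]$ with a ``high'' subchain carrying labels $[m+1,m+n]$.

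I would then analyze this sum in two parts. First, since every low label is less than every high label, a descent $u_i>u_{i+1}$ occurs exactly where $u_i$ is high and $u_{i+1}$ is low. Decomposing such a shuffle into its $d+1$ maximal ascending runs, each of the form $\mb{low}^{a_i}\mb{high}^{b_i}$, with the boundary constraints $a_0\geq0$, $a_i\geq1$ for $i\geq1$, $b_i\geq1$ for $i<d$, $b_d\geq0$, a stars-and-bars count shows that the number of shuffles with exactly $d$ descents is $\binom{m}{d}\binom{n}{d}$. Second, I would argue that $\Ga(u,\emptyset)$ depends only on $N=|u|$ and $d=|D(u)|$: once repeated factors $x_i^0$ collapse under the $\bN$-exponent convention, a direct enumeration of $u$-partitions by image size yields $\Ga(u,\emptyset)=\sum_s\binom{N-1-d}{s-1-d}M_{0^s}$ by choosing the $s-1-d$ extra strict-jump positions among the $N-1-d$ non-descent slots. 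This matches $\Ga(P_\al,\emptyset)$ for any composition $\al$ of $N$ of length $d+1$, so by \eqref{gf} we get $\Ga(u,\emptyset)=\sum_{j=0}^d(-1)^j\binom{d}{j}F_{0^{N-j}}$.

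Combining these ingredients with $N=m+n$ and swapping the order of summation gives
\[F_{0^m}\cdot F_{0^n}=\sum_j(-1)^jF_{0^{m+n-j}}\sum_{d\geq j}\binom{m}{d}\binom{n}{d}\binom{d}{j}.\]
The inner sum simplifies via $\binom{m}{d}\binom{d}{j}=\binom{m}{j}\binom{m-j}{d-j}$ followed by Vandermonde's convolution $\sum_k\binom{m-j}{k}\binom{n}{n-j-k}=\binom{m+n-j}{n-j}$, reducing it to $\binom{m}{j}\binom{m+n-j}{m}$ and delivering the claimed formula.

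The main obstacle will be the dependence-only-on-$(N,d)$ claim for $\Ga(u,\emptyset)$. Its derivation is elementary once one accepts the collapse $(x_i^0)^r=x_i^0$ in the formal $\bN$-exponent ring, but it is the step where the plan deviates most from the classical composition-indexed setting, since the shuffled labels produce descent patterns that do not themselves arise from a $P_\al$ construction; one must invoke the fact that only the distinct values of $f$ contribute to the weight, and so the specific placement of descents in $u$ becomes irrelevant.
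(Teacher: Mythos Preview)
Your proposal is correct and is essentially the same argument as the paper's: both pass to the shuffle expansion $F_{0^m}\cdot F_{0^n}=\sum_{u\in[m]\shuffle[m+1,m+n]}\Ga(u,\emptyset)$, count shuffles by number of descents to get $\binom{m}{d}\binom{n}{d}$, invoke \eqref{gf}, and then simplify via the trinomial revision plus Vandermonde. Your one concern is overstated: every chain $u$ with descent set $D(u)$ is, for the purposes of $\Ga(\cdot,\emptyset)$, isomorphic to $P_\al$ for the composition $\al$ with that descent set, so the ``depends only on $(N,d)$'' claim is immediate from \eqref{gm} without the separate collapse argument you sketch.
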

\begin{proof}
By symmetry, we can suppose that $m\leq n$. By \eqref{muf}, we first get that
\[F_{0^m}\cdot F_{0^n}=\Ga([m],\emptyset)\cdot\Ga([n],\emptyset)=\sum_{u\in[m]\shuffle[m+1,m+n]}\Ga(u,\emptyset).\]
Note that any $u\in[m]\shuffle[m+1,m+n]$ is isomorphic to one $P_\al$, as
the descents of $u$ determine a composition $\al$ of $m+n$. Then by \eqref{gf}, we need to count how many $u$'s correspond to compositions of length $\ell$ for any given $\ell\in[m+1]$. In fact, a descent inside $u$ appears exactly when a number in $[m]$ follows behind one in $[m+1,m+n]$. Hence, the number
of $u$'s determining compositions of length $\ell$ is
\[\sum_{i=0}^{n+1-\ell}{m \choose \ell-1}{n-1-i \choose \ell-2}={m \choose \ell-1}{n \choose \ell-1},\]
where the index $i$ counts how many numbers in $[m+1,m+n]$ are at the tail of $u$. Hence,
\begin{align*}
F_{0^m}&\cdot F_{0^n}=\sum_{\ell=1}^{m+1}
{m \choose \ell-1}{n \choose \ell-1}
\sum_{j=0}^{\ell-1}(-1)^j{\ell-1 \choose j}F_{0^{m+n-j}}\\
&=\sum_{j=0}^m(-1)^j\lb\sum_{\ell=j+1}^{m+1}
{m \choose \ell-1}{n \choose \ell-1}
{\ell-1 \choose j}\rb F_{0^{m+n-j}}\\
&=\sum_{j=0}^m(-1)^j{m \choose j}\lb\sum_{\ell=j+1}^{m+1}
{n \choose \ell-1}{m-j \choose m-\ell+1}\rb F_{0^{m+n-j}}
=\sum_{j=0}^m(-1)^j{m \choose j}{m+n-j \choose m}F_{0^{m+n-j}}.\qedhere
\end{align*}
\end{proof}

\noindent For instance,
\begin{align*}
&F_{0^n}=\sum_{i=1}^n{n-1 \choose i-1}M_{0^i},\\
&F_0\cdot F_{0^n}
=(n+1)F_{0^{n+1}}-nF_{0^n}=\sum_{i=1}^{n+1}{n\choose i-1}iM_{0^i},\\
&F_{0^2}\cdot F_{0^n}={n+2\choose 2}F_{0^{n+2}}-2{n+1\choose 2}F_{0^{n+1}}+{n\choose 2}F_{0^n}\\
\end{align*}
for any $n\geq1$.

Consequently, one can expect that the multiplication of two WPFQFs is not necessarily a positive linear combination of WPFQFs. To study the multiplication rule of WPFQFs, we should first show that they are closed under the usual product. It forces us to provide the following technical lemma.
\begin{lem}
For any composition $\al$ of $n$, the function $\La(P_\al,\emptyset)$ has the following expression,
\beq\label{pp}
\La(P_\al,\emptyset)=\sum_{j=p+1}^n
m_{\al,j}2^jM_{0^j},
\eeq
where $m_{\al,j}=|\{\be\unlhd\al\,|\,\ell(\be)=j\}|=\sum\limits_{i=0}^p{n-2p-1 \choose i+j-2p-1}{p \choose i}2^i$ with $p=|P(\al)|$.
Moreover, $\La(P_\al,\emptyset)$ can be $\bZ$-linearly expanded by $K_{0^j}$'s as follows.
\beq\label{pk}
\La(P_\al,\emptyset)=\sum_{j=0}^{n-1}a_j K_{0^{n-j}}=\sum_{k=0}^p(-1)^k{p\choose k}K_{0^{n-2k}},
\eeq
where the coefficients
\[a_k=\sum_{j=0}^k(-1)^{k-j}{n-1-j\choose n-1-k}m_{\al,n-j}=
\begin{cases}
(-1)^{k/2}{p\choose k/2},&\mb{if }k\mb{ is even},\\
0,&\mb{otherwise},
\end{cases}\]
for $k=0,1\dots,n-1$.
\end{lem}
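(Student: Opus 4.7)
The plan is to derive the $M_{0^j}$-expansion by specializing Prop~\ref{p1} to $S=\emptyset$, to pass to the $K_{0^m}$-expansion by inverting the triangular change-of-basis between $\{2^jM_{0^j}\}$ and $\{K_{0^m}\}$ inside $\mb{WCQSym}_0$, and finally to collapse the resulting nested sum with a generating function.

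Setting $S=\emptyset$ in $\La(P_\al,S)$ replaces every positive exponent by $0$, so each $M_\be$ appearing in Prop~\ref{p1} collapses to $M_{0^{\ell(\be)}}$. Since $n_{\al\be}\leq 1$ when both are compositions (stated just before Example~\ref{ex})---hence $=1$ whenever $\be\unlhd\al$---I obtain $\La(P_\al,\emptyset)=\sum_j m_{\al,j}\,2^jM_{0^j}$ with $m_{\al,j}=\#\{\be\vDash n:\be\unlhd\al,\ \ell(\be)=j\}$. By the descent-set reformulation noted at the start of Section~3, $\be\unlhd\al$ iff $P(\al)\subseteq D(\be)\cup(D(\be)+1)$; I count $D\subseteq[n-1]$ with $|D|=j-1$ satisfying this containment by partitioning according to the type of $D\cap\{q-1,q\}$ at each peak $q$. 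Since the $p$ peak neighborhoods are pairwise disjoint, letting $i$ denote the number of peaks with $\{q-1,q\}\subseteq D$ yields $\binom{p}{i}2^{p-i}$ choices on the peak region (contributing $p+i$ to $|D|$) and $\binom{n-1-2p}{j-1-p-i}$ on the complement; re-indexing $i\mapsto p-i$ matches the claimed formula for $m_{\al,j}$ and forces $j\geq p+1$.

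Applying Prop~\ref{p1} to $\al=0^m$ (whose decompositions into empty-peak pieces are just compositions of a length-$m$ chain) gives $K_{0^m}=\sum_{j=1}^m\binom{m-1}{j-1}2^jM_{0^j}$. Pascal inversion yields $2^jM_{0^j}=\sum_{m=1}^j(-1)^{j-m}\binom{j-1}{m-1}K_{0^m}$; substituting this into the $M$-expansion, interchanging the $j$ and $m$ sums, and setting $k=n-m$ together with $j\mapsto n-j$ recovers precisely the stated formula for $a_k$.

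The last and main step is the closed-form evaluation. Substituting the explicit $m_{\al,n-j}$, using $\binom{n-1-2p}{n-j-1-2p+i}=\binom{n-1-2p}{j-i}$, swapping the $i$ and $j$ sums, changing variables $r=j-i$, and applying the classical identity $\sum_r(-1)^r\binom{M}{r}\binom{R-r}{L}=\binom{R-M}{L-M}$ collapses the inner sum to $\binom{2p-i}{2p-k}$, leaving $a_k=(-1)^k\sum_{i=0}^p(-2)^i\binom{p}{i}\binom{2p-i}{k-i}$. The generating function then telescopes as
\[
\sum_k a_k x^k=(1-x)^{2p}\sum_{i=0}^p\binom{p}{i}\Bigl(\tfrac{2x}{1-x}\Bigr)^{\!i}=(1-x)^{2p}\Bigl(\tfrac{1+x}{1-x}\Bigr)^{\!p}=(1-x^2)^p,
\]
giving $a_{2\ell}=(-1)^\ell\binom{p}{\ell}$, $a_{2\ell+1}=0$, and the compact form $\sum_{\ell=0}^p(-1)^\ell\binom{p}{\ell}K_{0^{n-2\ell}}$. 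The main obstacle is this nested binomial manipulation; the generating-function trick is what makes the collapse to $(1-x^2)^p$ work cleanly.
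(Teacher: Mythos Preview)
Your proof is correct. The derivation of the $M_{0^j}$-expansion, the combinatorial count of $m_{\al,j}$, and the triangular inversion to reach the expression for $a_k$ all match the paper's argument essentially verbatim.

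The genuine difference is in evaluating the closed form of $a_k$. The paper rephrases the desired equality as the combinatorial identity
\[
\sum_{i=0}^{\lfloor k/2\rfloor}(-1)^i\binom{n-1-2i}{n-1-k}\binom{p}{i}
=\sum_{i=0}^p\binom{n-2p-1}{2r-i}\binom{p}{i}2^i
\]
(shown for $k=2r$, the odd case being analogous) and proves it by a bijective counting of marked-card selections together with inclusion--exclusion. Your route instead substitutes the explicit $m_{\al,n-j}$, collapses the inner sum via the alternating Vandermonde identity $\sum_r(-1)^r\binom{M}{r}\binom{R-r}{L}=\binom{R-M}{L-M}$, and then recognizes the resulting single sum as the coefficient expansion of $(1-x^2)^p$ via the generating-function factorization $(1-x)^{2p}\bigl(\tfrac{1+x}{1-x}\bigr)^p$. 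Your approach is shorter and more mechanical; the paper's approach gives a combinatorial explanation for why both sides of the identity agree, but at the cost of setting up the card model and treating even and odd $k$ separately.
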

\begin{proof}
First note that for any $r\in\bP$,
\[K_{0^r}=\sum_{j=1}^r2^j{r-1\choose j-1}M_{0^j}=
\sum_{j=1}^r(-1)^{r-j}2^j{r-1\choose j-1}F_{0^j}.\]
Also, one can easily check the following inversion formulas,
\beq\label{ik}
M_{0^r}=2^{-r}\sum_{i=1}^r(-1)^{r-i}{r-1 \choose i-1}K_{0^i}\mb{ and }F_{0^r}=2^{-r}\sum_{i=1}^r{r-1 \choose i-1}K_{0^i}.
\eeq

For $\al\vDash n$, taking $S=[n]$, we have $\La(P_\al,S)=K_\al$ by definition, and $\La(P_\al,\emptyset)$ is the opposite extreme case.
According to \eqref{ka} and \eqref{PM} we have
\[\La(P_\al,\emptyset)=\sum_{\be\unlhd\al}2^{\ell(\be)}M_{0^{\ell(\be)}}=\sum_{j=p+1}^n
m_{\al,j}2^jM_{0^j},\]
where $m_{\al,j}=|\{\be\unlhd\al\,|\,\ell(\be)=j\}|$ for $j=1,\dots,n$. In particular, $m_{\al,n}=1$ and $m_{\al,j}=0$ if $j\leq p$.

For any $\be\unlhd\al$, it follows exactly one rule that the descent set $D(\be)$ must contain either $i-1$ or $i$ when $i\in P(\al)$. Hence, if $\ell(\be)=j$, we have
\[m_{\al,j}=\sum_{i_1,\dots,i_p\in\{1,2\}}
{n-2p-1 \choose j-i_1-\cdots-i_p-1}2^{2p-i_1-\cdots-i_p}
=\sum_{i=0}^p{n-2p-1 \choose i+j-2p-1}{p \choose i}2^i.\]

Now by \eqref{ik}, we have
\begin{align*}
&\La(P_\al,\emptyset)=\sum_{j=p+1}^n
m_{\al,j}2^jM_{0^j}=\sum_{j=p+1}^nm_{\al,j}\sum_{s=1}^j(-1)^{j-s}{j-1 \choose s-1}K_{0^s}\\
&\quad=\sum_{s=1}^n\lb\sum_{j=s}^n(-1)^{j-s}{j-1 \choose s-1}m_{\al,j}\rb K_{0^s}
=\sum_{s=0}^{n-1}\lb\sum_{j=0}^s(-1)^{s-j}{n-1-j \choose n-1-s}m_{\al,n-j}\rb K_{0^{n-s}}.
\end{align*}

Consequently, we set
\[a_k:=\sum_{j=0}^k(-1)^{k-j}{n-1-j\choose n-1-k}m_{\al,n-j}\]
for $k=0,1\dots,n-1$. Then there exists a lower unitriangular matrix $C=\lb n-j\choose n-i\rb_{i,j=1,\dots,n}$, containing Pascal's triangle, such that
\beq\label{mm}C\cdot\begin{pmatrix}
a_0\\
\vdots\\
a_{n-1}
\end{pmatrix}=
\begin{pmatrix}
m_{\al,n}\\
\vdots\\
m_{\al,1}
\end{pmatrix},\quad
\begin{pmatrix}
a_0\\
\vdots\\
a_{n-1}
\end{pmatrix}=
C^{-1}\cdot\begin{pmatrix}
m_{\al,n}\\
\vdots\\
m_{\al,1}
\end{pmatrix},\eeq
since $C^{-1}=\lb(-1)^{i-j}{n-j\choose n-i}\rb_{i,j=1,\dots,n}$. Actually, for any $i,j=1,\dots,n$,
\[\sum_{k=1}^n{n-k\choose n-i}\cdot(-1)^{k-j}{n-j\choose n-k}=
{n-j\choose n-i}\sum_{j\leq k\leq i}(-1)^{k-j}{i-j\choose k-j}={n-j\choose n-i}(1-1)^{i-j}=\de_{ij}.\]

Next we prove that
\[a_k=\begin{cases}
(-1)^{k/2}{p\choose k/2},&\mb{if }k\mb{ is even},\\
0,&\mb{otherwise},
\end{cases}\]
which is equivalent to the following combinatorial identities by \eqref{mm},
\[\sum_{i=0}^{\lfloor k/2\rfloor}(-1)^i{n-1-2i\choose n-1-k}{p\choose i}=m_{\al,n-k}=\sum_{i=0}^p{n-2p-1\choose n-2p-1+i-k}{p\choose i}2^i\]
for $k=0,1\dots,n-1$, where $\lfloor k/2\rfloor$ is the largest integer $\leq k/2$.
Here we only show the cases for even $k$, and the cases for odd $k$ are similar. Let $k=2r$, it becomes
\beq\label{com}\sum_{i=0}^p(-1)^i{n-1-2i\choose n-1-2r}{p\choose i}=\sum_{i=0}^p{n-2p-1\choose 2r-i}{p\choose i}2^i.\eeq

The explanation for such identity \eqref{com} is given by the following combinatorial proof. For fixed $0\leq2p,2r\leq n-1$, one first marks $2p$ red cards and then $n-1-2p$ blue cards with numbers from $1$ to $n-1$. Now pick up $2r$ cards among them with restriction that for each $l\in[p]$, at most one of two red cards marked $2l-1,2l$ can be chosen. It is clear that the RHS of \eqref{com} gives the number of choices of cards under such constraint.

Alternately, these choices can be made by rejecting the rest $n-1-2r$ cards with at least one of two red cards marked $2l-1,2l$ to be dropped for each $l\in[p]$. Given any $i=0,1,\dots,p$, one can first select $i$ pairs of red cards marked $2l-1,2l$ for $l\in[p]$, then exclude $n-1-2r$ cards from the rest $n-1-2i$ ones. Denote $c_i$ the number of ways for such operation. By the inclusion-exclusion principle, $\sum_{i=0}^p(-1)^ic_i$, exactly the LHS of \eqref{com}, counts the same choices as previous. Then we verify identity \eqref{com}, thus formula \eqref{pk}.
\end{proof}

\begin{lem}
Given $m,n\in\bP$, we have
\beq\label{mukz}
K_{0^m}\cdot K_{0^n}=\sum_{k=0}^m(-1)^k {m+n-k \choose m}{m \choose k}\dfrac{m+n-2k}{m+n-k}
K_{0^{m+n-2k}}.
\eeq
\end{lem}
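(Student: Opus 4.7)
The plan is to reduce the product to the shuffle-of-chains identity \eqref{mul} and then invoke the expansion \eqref{pk} for $\La(P_\be,\emp)$ established in the previous lemma. Since both $P_{0^m}$ and the relabeled copy of $P_{0^n}$ are chains, their sets of linear extensions are singletons, and \eqref{mul} with empty distinguished subsets gives
\[
K_{0^m}\cdot K_{0^n}=\sum_{u\in[m]\shuffle[m+1,m+n]}\La(u,\emp).
\]
The crucial point is that when $S=\emp$, the weight $w(f,\emp)=\prod_v x_{|f(v)|}^0$ ignores the identity of the vertex, and the enriched $P$-partition conditions on $f$ depend only on the descent pattern of the labeled chain. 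Hence, letting $\be_u\vDash m+n$ be the unique composition with $D(\be_u)=D(u)$, we have $\La(u,\emp)=\La(P_{\be_u},\emp)$, which by \eqref{pk} equals $\sum_{j\geq 0}(-1)^j\binom{p(\be_u)}{j}K_{0^{m+n-2j}}$.

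After swapping the order of summation, the coefficient of $K_{0^{m+n-2k}}$ becomes $(-1)^k T_k(m,n)$, where $T_k(m,n):=\sum_u\binom{p(\be_u)}{k}$. I will evaluate $T_k(m,n)$ by double counting pairs $(u,S)$ with $S\subseteq P(\be_u)$ and $|S|=k$. Fixing a $k$-subset $S\subseteq[2,m+n-1]$ first, I will use that descents of a shuffle of the two increasing chains $[m]$ and $[m+1,m+n]$ occur exactly at positions where a letter from $[m+1,m+n]$ is immediately followed by one from $[m]$; this forces, at each $i\in S$, the letter at position $i$ to be \emph{big} and that at position $i+1$ to be \emph{small}. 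Peak sets have no two consecutive elements, so the $2k$ prescribed positions are pairwise disjoint; the remaining $m+n-2k$ positions are filled freely with the leftover $m-k$ small and $n-k$ big letters, giving $\binom{m+n-2k}{m-k}$ shuffles per admissible $S$. Combined with the classical count $\binom{m+n-k-1}{k}$ of $k$-subsets of $[2,m+n-1]$ with no two consecutive elements, this yields
\[
T_k(m,n)=\binom{m+n-k-1}{k}\binom{m+n-2k}{m-k}=\binom{m+n-k}{m}\binom{m}{k}\frac{m+n-2k}{m+n-k},
\]
the last equality being a one-line factorial verification. This proves \eqref{mukz}.

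The only delicate point is the identification $\La(u,\emp)=\La(P_{\be_u},\emp)$: with $S=\emp$ the labels disappear from the weight and the argument reduces to matching descent sets, but this requires care since the shuffle-induced labeling of $u$ is in general not the standard labeling of $P_{\be_u}$. Once this identification is in hand, the rest of the argument is elementary bijective combinatorics.
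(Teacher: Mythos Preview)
Your proof is correct and shares its opening with the paper's: both start from $K_{0^m}\cdot K_{0^n}=\sum_{u\in[m]\shuffle[m+1,m+n]}\La(u,\emp)$ and then apply \eqref{pk}. The identification $\La(u,\emp)=\La(P_{\be_u},\emp)$ that you flag as delicate is also used (tacitly) by the paper, and is justified exactly as you indicate: when $S=\emp$ the weight enumerator depends only on $D(u)$.

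Where the two arguments diverge is in evaluating the resulting sum. The paper groups the shuffles $u$ by the exact peak count $p=|P(\be_u)|$, obtains the closed form $\binom{m-1}{p}\binom{n}{p}+\binom{m}{p}\binom{n-1}{p}$ via a case split on whether the letter $1$ sits at the head of $u$, and then simplifies the double sum $\sum_k(-1)^k\sum_{p\ge k}(\cdots)\binom{p}{k}$ by successive binomial identities. You instead interchange the sums and compute $T_k=\sum_u\binom{|P(\be_u)|}{k}$ directly by counting pairs $(u,S)$: fixing a peak subset $S$ first forces position $i$ to be ``big'' and position $i+1$ to be ``small'' for each $i\in S$ (and the non-descent condition at $i-1$ is automatic in a shuffle of two increasing words, since any descent at $i$ already forces position $i$ to be big), leaving the remaining $m+n-2k$ positions free. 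This bypasses the paper's case analysis and the subsequent chain of Vandermonde-type simplifications, and yields the target coefficient in one factorial identity. The paper's route has the minor by-product of an explicit count of shuffles with a prescribed number of peaks; yours is shorter and more transparent for the purpose at hand.
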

\begin{proof}
We can follow the idea in the proof of Lemma \ref{mufz}. Let $m\leq n$, and we know that
\[K_{0^m}\cdot K_{0^n}=\La([m],\emptyset)\cdot\La([n],\emptyset)=\sum_{u\in[m]\shuffle[m+1,m+n]}\La(u,\emptyset).\]
Now any $u\in[m]\shuffle[m+1,m+n]$ is isomorphic to one $P_\al$, and
the cardinality $|P(\al)|$ determines $\La(u,\emptyset)$ by \eqref{pk}. We need to count how many $u$'s correspond to compositions with $p$ peaks for any given $p=0,1,\dots,m$. In fact, a peak of $u$ appears just when a number in $[2,m]$ follows behind one in $[m+1,m+n]$, or the number $1$ follows behind at least two numbers in $[m+1,m+n]$. Therefore, the number
of $u$'s determining compositions with $p$ peaks is
\[\sum_{i=0}^{n-p}{m-1 \choose p}{n-1-i \choose p-1}
+\sum_{j=0}^{n-p-1}{m \choose p}{n-2-j \choose p-1}
={m-1 \choose p}{n \choose p}
+{m \choose p}{n-1 \choose p},\]
where the index $i$ counts how many numbers in $[m+1,m+n]$ are at the tail of $u$, when the number $1$ is at the head of $u$. Otherwise, we use the index $j$. Again by \eqref{pk}, we have
\begin{align*}
K_{0^m}\cdot K_{0^n}
&=\sum_{p=0}^m\lb{m-1 \choose p}{n \choose p}
+{m \choose p}{n-1 \choose p}\rb
\sum_{k=0}^p(-1)^k{p\choose k} K_{0^{m+n-2k}}\\
&=\sum_{k=0}^m(-1)^k \lb\sum_{p=k}^m{m-1 \choose p}{n \choose p}{p\choose k}+{m \choose p}{n-1 \choose p}{p\choose k}\rb K_{0^{m+n-2k}}\\
&=\sum_{k=0}^m(-1)^k \lb\sum_{p=k}^m{m-1 \choose k}{m-1-k\choose m-1-p}{n \choose p}+{m \choose k}{m-k \choose m-p}{n-1 \choose p}\rb K_{0^{m+n-2k}}\\
&=\sum_{k=0}^m(-1)^k \lb{m-1 \choose k}{m+n-1-k \choose m-1}+{m \choose k}{m+n-1-k \choose m}\rb
K_{0^{m+n-2k}}\\
&=\sum_{k=0}^m(-1)^k {m+n-k \choose m}{m \choose k}\dfrac{m+n-2k}{m+n-k}
K_{0^{m+n-2k}}.\qedhere
\end{align*}
\end{proof}

By formulas \eqref{mul}, \eqref{pk}, the multiplication of $K_{0^r}$'s can be $\bZ$-linearly expanded in terms of themselves. Similarly, WPQSym has a subalgebra spanned by $K_{0^r},\,\forall r\in\bP$, with $K_{0^0}:=K_\emptyset=1$, and we denote it by WPQSym$^0$. Moreover,  WPQSym$^0_\bQ=$WCQSym$^0_\bQ$ by \eqref{ik}, with two bases $\{F_{0^r}\}_{r\in\bN}$ and $\{K_{0^r}\}_{r\in\bN}$.
\begin{exam} We list some cases of low degree below.
\begin{align*}
&K_0=\La(P_1,\emptyset)=2M_0, K_{0^2}=\La(P_2,\emptyset)=4M_{0^2}+2M_0,\\
&K_{0^3}=\La(P_3,\emptyset)=8M_{0^3}+8M_{0^2}+2M_0,\\ &K_{0^4}=\La(P_4,\emptyset)=16M_{0^4}+24M_{0^3}+12M_{0^2}+2M_0,\\
&K_0\cdot K_0=\La(P_2,\emptyset)+\La(P_{11},\emptyset)=2K_{0^2},\\
&K_{0^2}\cdot K_0=
\La(P_3,\emptyset)+\La(P_{21},\emptyset)+\La(P_{12},\emptyset)=3K_{0^3}-K_0,\\
&K_{0^3}\cdot K_0=
\La(P_4,\emptyset)+\La(P_{31},\emptyset)+\La(P_{22},\emptyset)+\La(P_{13},\emptyset)=4K_{0^4}-2K_{0^2},\\
&K_0\cdot K_{0^n}
=(n+1)K_{0^{n+1}}-(n-1)K_{0^{n-1}},\\
&K_{0^2}\cdot K_{0^n}=
{n+2 \choose 2}K_{0^{n+2}}-n^2K_{0^n}+(n-2)(n-1)K_{0^{n-2}},
\end{align*}
for any $n\geq1$.
\end{exam}

Let $\mP\mC(n):=\{\al\in\mW\mC(n)\,|\,P(\al)=D(\al)\backslash\{\|\al\|\}\}$ and $\mP\mC=\bigcup\limits_{n\geq0}^.\mP\mC(n)$. For the ring structure of WPQSym, we give the following main result by an effort.
\begin{theorem}\label{str}
WPQSym is a graded subring of WCQSym. Moreover for $n\geq0$, WPQSym$_n$ is freely generated as a $\bZ$-module by those $K_\al$'s with $\al\in\mP\mC(n)$.
\end{theorem}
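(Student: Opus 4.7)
The plan is to establish (i) closure of WPQSym under multiplication—which, combined with homogeneity of each $K_\al$, gives the graded subring structure—and (ii) that $\{K_\al\}_{\al\in\mP\mC(n)}$ is a $\bZ$-basis of each WPQSym$_n$.

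For closure, I would apply the shuffle multiplication formula \eqref{mul} to the disjoint labeled posets $P_\al\sim\be$ and $P_\be\smile\al$ introduced in \eqref{muf}, together with their natural subsets, obtaining
\[K_\al\cdot K_\be=\sum_{u\in(P_\al\sim\be)\shuffle(P_\be\smile\al)}\La(u,S\cup T).\]
Each shuffle $u$ is a labeled chain equipped with the subset $S\cup T$, and I would argue that its barring pattern together with its descent sequence corresponds—up to the equivalence of Lemma \ref{eq}—to the canonical labeling of $P_\ga$ for a suitable weak composition $\ga$, since inside each maximal run of barred or non-barred entries the labels inherited from $P_\al$ and $P_\be$ remain monotone, exactly as in the construction of $P_\ga$. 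Hence every shuffle term is a WPFQF, WPQSym is closed under multiplication, and the weight grading descends automatically.

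For spanning, Lemma \ref{eq} shows that $K_\al$ depends only on the zero intervals $(i_p)$, the block weights $(|\mu_q|)$, and the peak set $P(\al)$. Within each such equivalence class there is a unique representative in $\mP\mC$: in each positive block, the prescribed peak positions force the corresponding composition $\nu_q$ to have parts equal to the gaps between consecutive peaks (and, where relevant, between the last peak and the right end of the block), while the defining condition that $(i_j,s_j)\neq(0,1)$ for every admissible $j$ pins this composition down. Therefore $\{K_\al\}_{\al\in\mP\mC(n)}$ spans WPQSym$_n$.

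Linear independence is the main obstacle. Suppose $\sum_{\al\in\mP\mC(n)}c_\al K_\al=0$. Expansion via \eqref{PM} together with the $\bZ$-independence of the $M_\be$ yields
\[\sum_{\al\in\mP\mC(n),\,\be\unlhd\al}c_\al\,n_{\al\be}=0\qquad\text{for every }\be\in\mW\mC(n).\]
Because WPQSym$_n$ is not finitely generated and $\unlhd$ fails to be antisymmetric on $\mP\mC(n)$ (for example $(0,1)\unlhd(0^m,1)$ for every $m\geq1$), no one-step triangularity with respect to $\unlhd$ is available. My strategy is to exploit the multiplicative structure of $n_{\al\be}$ described just after Proposition \ref{p1}: once the \emph{positive skeleton} of $\al$—its block weights and peak set—is held fixed, the remaining freedom lies in how $\al$'s zero intervals distribute their entries among the chunks of $\be$, and the count factors as a product of binomial coefficients coming from cases (i)--(iv), augmented by the merging formula \eqref{11}. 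This reduces the linear system to a tensor product of unipotent Pascal-type matrices, each of which is invertible over $\bZ$, allowing one to conclude $c_\al=0$ skeleton by skeleton. The most delicate point is the exceptional situation \eqref{11}, in which consecutive singleton positive parts cause several zero-interval contributions to merge; confirming that the associated coefficient matrix remains invertible after this merging is where I expect the bulk of the work to concentrate.
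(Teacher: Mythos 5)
Your closure argument contains a genuine error. You claim that for each shuffle $u\in(P_\al\sim\be)\shuffle(P_\be\smile\al)$ the labels ``inside each maximal run of barred or non-barred entries \dots remain monotone,'' so that $\La(u)$ is itself a single WPFQF. This is false: the barred (overlined) labels of $P_\al\sim\be$ and of $P_\be\smile\al$ interleave under the shuffle, so a maximal barred interval of $u$ can acquire internal descents. The paper's own example makes this concrete: for $\al=(1,0^2,1,0)$, $\be=(0,1,0^2)$ one gets $u=8\ol{142}97\ol{563}$, whose barred block $\ol{142}$ is not monotone; its descents determine the composition $\ga=(2,1)$, and one must substitute $\La(P_\ga,\emptyset)=K_{0^3}-K_0$ from formula \eqref{pk}, yielding $\La(u)=K_{10^31^20^3}-K_{10^31^20}-K_{101^20^3}+K_{101^20}$ --- a signed $\bZ$-combination, not a single $K_\ga$. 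Indeed the product is not even positive in the $K$-basis (e.g.\ $K_{0^2}\cdot K_0=3K_{0^3}-K_0$), which your claim would contradict. The missing ingredient is precisely the lemma expanding $\La(P_\ga,\emptyset)$ in the $K_{0^j}$'s, which the paper proves beforehand and then substitutes blockwise into $\La(u)$; without it, closure is not established.

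Your linear independence argument is only a programme, not a proof, and it is aimed at the hardest route. The relation $\unlhd$ together with the multiplicities $n_{\al\be}$ does not literally reduce to a tensor product of unipotent Pascal matrices, exactly because of the merging phenomenon \eqref{11} that you yourself flag as the crux; the coupling between adjacent zero-intervals through consecutive singleton positive parts destroys the factorization. The paper takes a different and more economical path: it totally orders $\mW\mC(n)$ (by decreasing $0$-length, then position of zeros, then reverse lexicographic order on descent sets) and observes that for every $\al\in\mP\mC(n)$ \emph{without} a consecutive pattern $0,1,r-1$ ($r\ge 2$) the expansion \eqref{PM} of $K_\al$ has strict leading term $2^{\ell(\al)}M_\al$, which settles independence for those; the remaining problematic family $\{K_{(0^i,\al)}\}$ is then handled by an explicit coefficient computation leading to a contradiction (a putative dependence would force $a_0+a_1=-m$ and $a_s+b_{s-1}=(-1)^sm$ simultaneously, leaving the term $M_{(0^{m-1},n-1,1)}$ unmatched). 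Your spanning argument via Lemma \ref{eq} is correct and matches the paper, but as it stands both the closure and the independence halves of your proposal have gaps that the paper's actual proof fills with specific machinery you have not supplied.
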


\begin{proof}
We fix the weight $n$ and totally order $\mW\mC(n)$ as follows, first by decreasing order of the 0-length $\ell_0$, i.e. the total weight $\|\cdot\|$, then by left-to-right counting of zeros, and finally by reverse lexicographic order on the descent sets. That is, if we abuse to denote such order as $<$, then
\[\cdots<(k,0,n-k)<(k,0,n-k-1,1)<\cdots<(0,1^n)<n<(n-1,1)<\cdots<(1^n)\]
Let $\mK_n:=\{K_\al\}_{\al\in\mP\mC(n)}$.
By Lemma \ref{eq}, for any $\be\in\mW\mC(n)$, there exists a unique $\al\in\mP\mC(n)$ such that $K_\al=K_\be$. Hence, $\mK_n$ also spans WPQSym$_n$. Note that if $\ga\unlhd\al$, we have $\ell_0(\ga)\leq\ell_0(\al)$. For any $\al\in\mP\mC(n)$ without three consecutive parts $0,1,r-1$ with $r\geq2$, the expansion of $K_\al$ in terms of the WCMQFs as \eqref{PM} has the smallest term $2^{\ell(\al)}M_\al$ with respect to the above total order on $\mW\mC(n)$. Thus such $K_\al$'s form an linear independent subset of $\mK_n$.

For the linear independence of $\mK_n$, it is enough to check the special case $\{K_{(0^i,\al)}:\al\vDash n,\,(0^i,\al)\in\mP\mC(n),i\in\bN\}$ for given $n\geq2$.
Indeed,
\[K_{(0^i,n)}=\sum_{j=0}^i\sum_{\al\vDash n}{i\choose j}2^{j+\ell(\al)}M_{(0^j,\al)}=K_{(0^i,1,n-1)}+
\sum_{j=0}^{i-1}\sum_{p=2}^n\sum_{\be\vDash n-p}{i\choose j}2^{j+1+\ell(\be)}M_{(0^j,p,\,\be)}.\]
Note that all $K_{(0^i,\al)},\,(0^i,\al)\in\mP\mC(n)$, except $K_{(0^i,n)}$ and $K_{(0^i,1,n-1)}$, lack the term $M_{(0^i,n)}$. Suppose that there exists a nontrivial vanishing $\bZ$-linear combination of these $K_{(0^i,\al)}$'s, then we can find the largest $m\in\bP$ such that
\[K_{(0^m,n)}-K_{(0^m,1,n-1)}\in\bigoplus_{0\leq r\leq m-1}\bigoplus_{\be\vDash n\atop(0^r,\,\be)\in\mP\mC(n)}\bQ K_{(0^r,\,\be)}.\]
Among these WPFQFs, only
\[K_{(0^m,n)},\,K_{(0^m,1,n-1)},\,K_{(0^{m-1},n)},\,K_{(0^{m-1},s,n-s)},\,K_{(0^{m-1},1,t,n-1-t)},\,s\in[1,n-1],t\in[2,n-2],\]
can be expanded with some of $M_{(0^{m-1},n)},\,M_{(0^{m-1},s,n-s)}$ as terms.

On the other hand, for any $a_0,a_1\dots,a_{n-1},b_2,\dots,b_{n-2}\in\bQ$ with $b_1=0$,
\begin{align*}
a_0K&_{(0^{m-1},n)}+\sum_{s=1}^{n-1}a_s K_{(0^{m-1},s,n-s)}+\sum_{t=2}^{n-2}b_t K_{(0^{m-1},1,t,n-1-t)}\\
&=(a_0+a_1)\lb2^mM_{(0^{m-1},n)}+2^{m+1}\sum_{s=2}^{n-1}M_{(0^{m-1},s,n-s)}\rb
+(a_0+a_1+a_2)2^{m+1}M_{(0^{m-1},1,n-1)}\\
&+2^{m+1}\lb\sum_{s=2}^{n-2}(a_s+a_{s+1}+b_{s-1}+b_s)M_{(0^{m-1},s,n-s)}+(a_{n-1}+b_{n-2})M_{(0^{m-1},n-1,1)}\rb\\
&+\mb{ terms of }M_{(0^j,\,\be)}\mb{ with }0\leq j\leq m-1,\ell(\be)>2.
\end{align*}
It necessarily takes $a_0+a_1=-m$, such that
\[K_{(0^m,n)}-K_{(0^m,1,n-1)}+a_0K_{(0^{m-1},n)}+\sum_{s=1}^{n-1}a_s K_{(0^{m-1},s,n-s)}+\sum_{t=2}^{n-2}b_t K_{(0^{m-1},1,t,n-1-t)}\]
has none of terms $M_{(0^{m-1},n)}$ and $M_{(0^{m-1},s,n-s)},\,s\in[1,n-1]$. However, this is impossible, as we also need $a_s+b_{s-1}=(-1)^sm$ for $s\geq2$, and then the term $M_{(0^{m-1},n-1,1)}$ remains.
Hence, we get the contradiction. It implies that $\mK_n$ is a linear independent set spanning WPQSym, and thus a $\bZ$-basis of WPQSym.


Next we give the multiplication rule of $K_\al$'s. As a special case of \eqref{mul}, for two weak compositions $\al,\be$,
\beq\label{muk}
K_\al\cdot K_\be=\La(P_\al)\cdot\La(P_\be)=\sum_{u\in(P_\al\sim\be)\shuffle(P_\be\smile\al)}\La(u),
\eeq
using the notation in \eqref{muf}. We need to prove that for any chain $u\in(P_\al\sim\be)\shuffle(P_\be\smile\al)$, $\La(u)$ can be written as a $\bZ$-linear combination of the WPFQFs.

In fact, the function $\La(u)$ can be expressed as follows. By contrast with those $(P_\ga,S)$'s, $u$ may have decreasing consecutive numbers under an overline. Consequently, the descents of any interval in $u$ under an overline determine a composition $\ga$. Note that the numbers with overlines in $u$ are smaller than other without lines. Thus for any such interval of $u$ with composition $\ga$, we can substitute the formula \eqref{pk} of $\La(P_\ga,\emptyset)$ into the relative part in $\La(u)$, to derive the expansion of $\La(u)$ in terms of the WPFQFs. For example, if $\al=(1,0^21,0),\,\be=(0,1,0^2)$, write $P_\al=5\ol{12}4\ol{3},\,P_\be=\ol{1}4\ol{23}$, we see that $P_\al\sim\be=8\ol{12}7\ol{3}$ and $P_\be\smile\al=\ol{4}9\ol{56}$. In particular, $u=8\ol{142}97\ol{563}\in(P_\al\sim\be)\shuffle(P_\be\smile\al)$. In addition,  $\ol{142},\,\ol{563}$ both correspond to $\ga=21$ with $P_\ga=132$. We have
\[\La(u)=K_{10^31^20^3}-K_{10^31^20}-K_{101^20^3}+K_{101^20},\]
as $\La(132,\emptyset)=K_{0^3}-K_0$. As a result, WPQSym is a graded subring of WCQSym.
\end{proof}

\begin{exam}
For $K_2=2M_2+4M_{11},\,K_{10}=4M_{10}+2M_2,\,K_{01}=4M_{01}+2M_1$, as
\begin{align*}
&K_{201}=8M_{201}+16M_{1101}+4M_{21}+4M_{12}+16M_{111},\\
&K_{102}=8M_{102}+16M_{1011}+2M_3+4M_{21}+8M_{12}+16M_{111},\\
&K_{1011}=8M_{102}+16M_{1011}+4M_{21}+4M_{12}+16M_{111},\\
&K_{03}=4M_{03}+8M_{021}+8M_{012}+16M_{0111}+2M_3+4M_{21}+4M_{12}+8M_{111},\\
&K_{021}=8M_{021}+8M_{012}+16M_{0111}+4M_{21}+4M_{12}+8M_{111},\\
&K_{012}=4M_{03}+8M_{021}+8M_{012}+16M_{0111}+4M_{12}+8M_{111},
\end{align*}
we have
\begin{align*}
K_2\cdot K_{01}&=8(M_2\cdot M_{01})+4(M_2\cdot M_1)+16(M_{11}\cdot M_{01})+8(M_{11}\cdot M_1)\\
&=8(M_{201}+M_{03}+M_{021}+M_{012}+M_{21})+4(M_3+M_{21}+M_{12})\\
&+16(M_{1101}+M_{102}+2M_{1011}+M_{021}+M_{012}+3M_{0111}+M_{12}+3M_{111})\\
&+8(M_{21}+M_{12}+3M_{111})\\
&=\La(23\ol{1}4)+\La(2\ol{1}34)+\La(2\ol{1}43)+\La(\ol{1}234)+\La(\ol{1}342)+\La(\ol{1}324)\\
&=K_{201}+K_{102}+K_{1011}+K_{03}+K_{021}+K_{012}.\\
K_{00}\cdot K_{01}&=16(M_{00}\cdot M_{01})+8(M_{00}\cdot M_1)+8(M_0\cdot M_{01})+4(M_0\cdot M_1)\\
&=16(M_{0100}+2M_{0010}+3M_{0001}+2M_{010}+4M_{001}+M_{01})\\
&+8(M_{100}+M_{010}+M_{001}+M_{10})+M_{01}+8(M_{010}+2M_{001}+2M_{01})\\
&+4(M_{10}+M_{01}+M_1)\\
&=\La(\ol{3}4\ol{12})+\La(\ol{13}4\ol{2})+\La(\ol{31}4\ol{2})+\La(\ol{123}4)
+\La(\ol{312}4)+\La(\ol{132}4)\\
&=K_{0100}+2K_{0010}+3K_{0001}-K_{01}.
\end{align*}
\end{exam}

\section{The Hopf algebra of weak peak quasisymmetric functions}
The existence of Hopf algebra structure on WPQSym will be studied in this section.

\subsection{WCQSym and WPQSym as Hopf algebras}
According to \cite[Prop. 2.5]{GTY}, the authors have defined the Hopf algebra structure on WCQSym as follows.
\beq\label{CM}
\De(M_\al)=\sum_{\be\ga=\al}M_\be\ot M_\ga=\sum_{i=0}^{\ell}M_{(n_1,\dots,n_i)}\ot M_{(n_{i+1},\dots,n_\ell)},\,\ve(M_{\al})=\de_{\al,\emptyset}
\eeq
and
\[S(M_\al)=(-1)^\ell\sum_{J\vDash\ell}M_{J\circ\al^r}\]
for any $\al=(n_1,\dots,n_\ell)\in\mW\mC$ with all $n_i\in\bN$, where
\[J\circ\al^r=(n_1+\cdots+n_{j_1},n_{j_1+1}+\cdots+n_{j_1+j_2},\dots,n_{j_1+\cdots+j_{l-1}+1}+\cdots+n_\ell)\]
for any $J=(j_1,\dots,j_l)\vDash\ell$. In particular, $S(M_{0^n})=(-1)^n\sum\limits_{i=1}^n{n-1\choose i-1}M_{0^i}=(-1)^nF_{0^n}$, thus $S(F_{0^n})=\sum\limits_{i=1}^n{n-1\choose i-1}(-1)^iF_{0^i}=(-1)^nM_{0^n}$.
On the other hand, the comultiplication and antipode formulas of $F_\al$'s in QSym can be found in \cite{Eh,MR}. Here we generalize it to the weak case.
\begin{prop}\label{CF}
For any $\al\in\mW\mC$,
\[\De(F_\al)=\sum_{\be|\ga=\al}F_\be\ot F_\ga,\, \ve(F_\al)=\de_{\al,\emptyset},\,S(F_\al)=\sum_{\ol{\be}=\ol{\al^t}}(-1)^{\|\be\|}d_{\al^t,\,\be}F_\be,\]
where we write $\be|\ga=\al$ if $\be\ga=\al$, or if the last part of $\be$ and the first one of $\ga$ are positive and $\be\vee\ga=\al$. The coefficient $d_{\al\be}={i_1-1\choose j_1-1}\cdots{i_k-1 \choose j_k-1}{i_{k+1}-1\choose j_{k+1}-1}$ for any $\al=(0^{i_1},s_1,\dots,0^{i_k},s_k,0^{i_{k+1}})$ and $\be=(0^{j_1},s_1,\dots,0^{j_k},s_k,0^{j_{k+1}})$ with all $i_p,j_p\in\bN$ and $s_q\in\bP$.
\end{prop}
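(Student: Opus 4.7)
My plan is to handle the three claims in turn, with escalating difficulty. For the counit, \eqref{FM} gives $F_\al=\sum_{\be\leq\al}c_{\al\be}M_\be$ and $\ve(M_\be)=\de_{\be,\emptyset}$, so only $\be=\emptyset$ could contribute; in the refining order, $\emptyset$ is comparable only to weak compositions of weight $0$, and among these $\emptyset\leq(0^i)$ gives $c_{(0^i),\emptyset}={i-1\choose -1}=0$ unless $i=0$. Hence $\ve(F_\al)=\de_{\al,\emptyset}$.

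For the coproduct, I would argue via the $P$-partition interpretation $F_\al=\Ga(P_\al)$. The Hopf structure on $\mb{WCQSym}\subset\bZ[[x_1,x_2,\ldots]]_\bN$ is realized by alphabet splitting: $\De$ corresponds to evaluating the generating function on a merged ordered alphabet $X\cup Y$ in which every letter of $X$ precedes every letter of $Y$. Any $P_\al$-partition $f:P_\al\to X\cup Y$ factors uniquely as a prefix $P_\al|_{[1,k]}\to X$ followed by a suffix $P_\al|_{[k+1,\ell]}\to Y$ for a unique $k\in\{0,1,\ldots,\ell\}$, and the monotonicity constraint at the cut is automatic since $Y$-values strictly exceed $X$-values, regardless of whether or not $k$ lies at a descent of $P_\al$. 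A case analysis on the location of $k$---inside a zero block $C_p$, at a block boundary, or strictly inside some positive block $P_p$---shows that the induced compositions $(\be,\ga)$ for the prefix and suffix satisfy $\be\ga=\al$ in the first two cases and $\be\vee\ga=\al$ with both the last part of $\be$ and the first part of $\ga$ positive in the third. Summing over $k$ reproduces exactly the index set $\{(\be,\ga):\be|\ga=\al\}$, yielding the coproduct formula.

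The antipode is the most technical part. The plan is to start from the $M$-basis expansion and apply the known formula $S(M_\mu)=(-1)^{\ell(\mu)}\sum_{J\vDash\ell(\mu)}M_{J\circ\mu^r}$ to obtain
\[
S(F_\al)=\sum_{\mu\leq\al}c_{\al\mu}(-1)^{\ell(\mu)}\sum_{J\vDash\ell(\mu)}M_{J\circ\mu^r},
\]
then regroup by the output $\nu=J\circ\mu^r$ and convert back to the $F$-basis via \eqref{MF}. For fixed $\nu$, the valid pairs $(\mu,J)$ are parametrized by those $\mu\leq\al$ whose reverse $\mu^r$ coarsens to $\nu$ under a uniquely determined $J$. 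The target identity says the resulting coefficient of $F_\be$ vanishes unless $\ol\be=\ol{\al^t}$, in which case it equals $(-1)^{\|\be\|}d_{\al^t,\be}$.

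The hard part will be establishing this combinatorial identity. I would split the argument into two layers. The \emph{compositional core}, obtained by stripping all zero blocks, should reduce to the classical Malvenuto--Reutenauer identity $S(F_{\ol\al})=(-1)^{|\al|}F_{\ol{\al^t}}$ in QSym, provable by a sign-reversing involution on pairs $(\mu,J)$. The \emph{zero-block layer} then accounts for refinements inside each zero block $0^{i_p}$ of $\al^t$: each such block contributes a local sum $\sum_{j_p}\pm{i_p-1\choose j_p-1}$ whose alternating signs and binomial structure---after careful use of the convention ${-1\choose -1}=1$---should reassemble into the product $d_{\al^t,\be}=\prod_p{i_p-1\choose j_p-1}$. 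As a fallback option, one could verify the convolution identity $\sum_{\be|\ga=\al}S(F_\be)F_\ga=\de_{\al,\emptyset}$ inductively on $\|\al\|$ using the just-established coproduct formula, avoiding the inversion of \eqref{MF} at the cost of needing to control the product $F_\be\cdot F_\ga$ precisely enough to detect the cancellations.
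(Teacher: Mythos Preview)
Your counit argument matches the paper's setup and is fine. Your coproduct argument is a genuinely different route: the paper expands $F_\al$ in the $M$-basis via \eqref{FM}, applies the known $\De(M_\eta)$, and then regroups, which forces it to verify the binomial identity ${i\choose j}=\sum_{p}{k-1+p\choose k-1}{i-k-p\choose j-k}$ (equation \eqref{c1}, a specialization of \eqref{c3}) to split $c_{\al,\xi\zeta}$ into $c_{\be\xi}c_{\ga\zeta}$. Your alphabet-splitting argument on $\Ga(P_\al)$ is more conceptual and avoids this identity altogether; it works because the coproduct on $M_\al$'s in \eqref{CM} is exactly the alphabet-doubling coproduct on $\bZ[[x_1,x_2,\dots]]_\bN$, so both descriptions agree on all of WCQSym.

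For the antipode, your plan diverges substantially from the paper and leaves the hard step unspecified. The paper does \emph{not} go through $S(M_\mu)$; instead it feeds the coproduct formula for $F_\al$ (which you just proved) directly into Takeuchi's formula \eqref{anti}, obtaining $S(F_\al)=\sum_{k\geq1}(-1)^k\sum_{\al_1|\cdots|\al_k=\al}F_{\al_1}\cdots F_{\al_k}$, and then interprets each product via \eqref{muf} as a sum over shuffles of words. A sign-reversing involution $\iota$ in the style of Benedetti--Sagan (merge/split on a rim-hook tableau) cancels everything except the single word $w_T^r$, yielding $S(F_\al)=(-1)^{\|\al\|}\Ga(P_\al^*)$. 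The final expansion into $F_\be$'s then comes from applying \eqref{gf} to each reversed zero block, which is exactly what produces the factors ${i_p-1\choose j_p-1}$ in $d_{\al^t,\be}$. Your proposed route through $S(M_\mu)$ and then back via \eqref{MF} would require tracking a double alternating sum over refinements and coarsenings simultaneously, and your ``compositional core plus zero-block layer'' sketch does not explain how those signs and binomials actually collapse; the fallback convolution check is even less promising, since controlling $F_\be\cdot F_\ga$ in WCQSym is itself nontrivial (cf.\ the discussion around \eqref{muf}--\eqref{gf}). The missing idea is to stay in the $F$-basis and use the sign-reversing involution on shuffles, which turns the cancellation into a one-line fixed-point computation.
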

\begin{proof}
By formulas \eqref{FM} and \eqref{CM}, we have
\[\De(F_\al)=\sum_{\eta\leq\al}c_{\al\eta}\De(M_\eta)=\sum_{\eta\leq\al}c_{\al\eta}\sum_{\xi\zeta=\eta}M_\xi\ot M_\zeta=\sum_{\xi\zeta\leq\al}c_{\al,\,\xi\zeta}M_\xi\ot M_\zeta.\]
Fix $\xi,\zeta$ such that $\xi\zeta\leq\al$. If the last part of $\xi$ is positive, then by definition there exist unique $\be,\ga\in\mW\mC$ such that $\be|\ga=\al,\,\xi\leq\be,\,\zeta\leq\ga$ and $c_{\al,\,\xi\zeta}=c_{\be\xi}c_{\ga\zeta}$.

Otherwise, if the last part of $\xi$ is zero, there exist pairs  $\be,\ga\in\mW\mC$ such that $\be|\ga=\be\ga=\al,\,\xi\leq\be,\,\zeta\leq\ga$ instead.
Suppose that the last interval of zeros in $\xi$ is $0^k$. It joins into a interval $0^j$ in $\xi\zeta$, as a contraction of $0^i$ in $\al$. In particular, we have $0<k\leq j\leq i$. In order to obtain the desired identity \[c_{\al,\,\xi\zeta}=\sum_{\be\ga=\al\atop\xi\leq\be,\,\zeta\leq\ga}c_{\be\xi}c_{\ga\zeta},\] we only need to prove that
\beq\label{c1}
{i\choose j}=\sum_{s=k}^{i-j+k}{s-1\choose k-1}{i-s\choose j-k}
=\sum_{p=0}^{i-j}{k-1+p\choose k-1}{i-k-p\choose j-k},
\eeq
when such $0^i$ in $\al$ is not at the tail. Otherwise, one can replace $i,j$ by $i-1,j-1$ respectively to modify it. Using
\[(1-x)^{-r}=\lb1+x+x^2+\cdots\rb^r=\sum_{n\geq0}{r+n-1\choose r-1}x^n,\,r\in\bP\]
and comparing the coefficients of $x^n$ in both sides of
\[(1-x)^{-r_1-r_2}=(1-x)^{-r_1}(1-x)^{-r_2},\]
we have
\beq\label{c3}
{r_1+r_2+n-1\choose r_1+r_2-1}=\sum_{p=0}^n{r_1+p-1\choose r_1-1}{r_2+n-p-1\choose r_2-1},\,r_1,r_2\in\bP,n\in\bN,
\eeq
useful for the rest paper.
Now pick $r_1=k,\,r_2=j-k+1,\,n=i-j$ to get \eqref{c1}. In conclusion,
\[\De(F_\al)=\sum_{\xi\zeta\leq\al}c_{\al,\,\xi\zeta}M_\xi\ot M_\zeta
=\sum_{\be|\ga=\al}\sum_{\xi\leq\be}c_{\be\xi}M_\xi\ot \sum_{\zeta\leq\ga}c_{\ga\zeta}M_\zeta=\sum_{\be|\ga=\al}F_\be\ot F_\ga.\]

For the formula of $S$ on the WCFQFs in WCQSym, we adopt the method of sign-reversing involutions introduced in \cite{BS} to obtain the desired cancellation-free one.

For any connected, graded Hopf algebra $H=\oplus_{n\geq0}H_n$, let $\pi=\mb{id}-u\circ\ve$,\,$u:k\rw H$ is the unit map. Then $\pi$ is locally nilpotent with respect to convolution, thus id$=\pi+u\circ\ve$ is invertible, and it gives the following convolution formula for antipode $S$ due to Takeuchi,
\beq\label{anti}
S=\sum_{k\geq0}(-1)^k\pi^{\star k}=u\circ\ve+\sum_{k\geq1}(-1)^k m^{k-1}\circ\pi^k\circ\De^{k-1},
\eeq
where $\star$ is the convolution, and $\pi^{\star0}=u\circ\ve$; see also \cite[Section 5]{AS}.

By \eqref{CM}, WCQSym can be a connected, graded coalgebra with $\bN$-grading defined by deg\,$M_\al:=\ell(\al)$, though its multiplication is not compatible with such grading.
In particular, the map $\pi$ is still locally nilpotent with respect to convolution, thus formula \eqref{anti} also works for WCQSym.

Referring to the argument in \cite[Section 5]{BS} for the fundamental basis $F_\al$ of QSym, we begin to handle the case of WCFQFs in WCQSym.
Given nonempty $\al\in\mW\mC$, we use the following rim-hook table to present the labeled poset $P_\al$ with the default $S=\cup_pP_p$. Assume that $D(\al)=\{a_1,\dots,a_k\}$. First write a diagram with $a_i$ boxes in the $i$th row from the bottom, and the last box of the $i$th row is in the same column as the first box of the $(i+1)$th row. Then we fill entries of this diagram from the bottom row to the top one, and from left to right along each row by the one-line word representing $P_\al$. The resulting table is called the \textit{rim-hook tableau} of $\al$, denoted $T$, with the previous filling sequence as the \textit{row word} $w_T$.
In particular, the descents of $\al$ in $D(\al)$ appear exactly at the corners of its diagram.

For example, if $\al=(2,0^3,1,0)$, $P_\al$ has the one-line-word presentation $w_T=67\ol{123}5\ol{4}$, and the associated rim-hook tableau
\[T=\raisebox{-1.2em}{\xy 0;/r.16pc/:
(0,0)*{};(10,0)*{}**\dir{-};
(0,5)*{};(25,5)*{}**\dir{-};
(5,10)*{};(25,10)*{}**\dir{-};
(20,15)*{};(25,15)*{}**\dir{-};
(0,0)*{};(0,5)*{}**\dir{-};
(5,0)*{};(5,10)*{}**\dir{-};
(10,0)*{};(10,10)*{}**\dir{-};
(15,5)*{};(15,10)*{}**\dir{-};
(20,5)*{};(20,15)*{}**\dir{-};
(25,5)*{};(25,15)*{}**\dir{-};
(2.5,2.5)*{\stt{6}};(7.5,2.5)*{\stt{7}};(7.5,7.5)*{\stt{\ol{1}}};(12.5,7.5)*{\stt{\ol{2}}};
(17.5,7.5)*{\stt{\ol{3}}};(22.5,7.5)*{\stt{5}};
(22.5,12.5)*{\stt{\ol{4}}};
\endxy}\,.\]
If we can write $\al=\al_1|\cdots|\al_k,\,\al_1,\dots,\al_k\neq\emptyset$, it means that $T$ can divide into a disjoint union of subtableaux $T_i$ of shape $\al_i$ with $w_i:=w_{T_i},\,i=1,\dots,k$, such that $w_T$ is the concatenation $w_1\cdots w_k$. For a word $v$, the \textit{restriction} of $v$ to a set $B$ of letters is a subword $v|_B$ of $v$ with letters in $B$.

Now we define a sign-reversing involution $\iota$ on the set
\[\Om_\al:=\{(x,v)\,|\,v\in x=w_1\shuffle\cdots\shuffle w_k\mb{ with }w_1\cdots w_k=w_T,\,T_1,\dots,T_k\neq\emptyset\}\]
as follows. Let sgn\,$(x,v)=(-1)^k$.
Consider a word $v$ in the shuffle product $x=w_1\shuffle\cdots\shuffle w_k$, and find the smallest index $j$ if possible, such that

(1) all the lengths of $w_1,\dots,w_j$ are 1;

(2) if $B=w_jw_{j+1}$, then $v|_B$ is the row word of a rim-hook subtableau of $T$.

If this is the case, we do the merge operation, and let $\iota(x,v)=(x',v)$, where
\[x'=w_1\shuffle\cdots w_{j-1}\shuffle w_jw_{j+1}\shuffle\cdots w_k.\]
In particular, sgn$\,(x',v)=-$sgn$\,(x,v)$. From the example above, if $x=6\shuffle7\shuffle\ol{123}\shuffle5\ol{4}$ and $v=576\ol{1243}\in x$, then we have $j=2$, $B=\{\ol{1},\ol{2},\ol{3},7\}$ and $v|_B=7\ol{123}$ is the row word of the middle subtableau of $T$.
Thus $\iota(x,v)=(x',v)$ with $x'=6\shuffle7\ol{123}\shuffle5\ol{4}$.

Otherwise, if the number $k<\|\al\|$ and any index satisfying conditions (1), (2) above can not be found, then there exists the smallest index $j$ such that

(1) the length of $w_j\geq2$;

(2) if $j>1$, then the sole element of $w_{j-1}$ is to the right of the leftmost one of $w_j$ in $v$.

In this case, we do splitting and let $\iota(x,v)=(x',v)$, where
\[x'=w_1\shuffle\cdots w_j\shuffle d\shuffle w'_j\shuffle w_{j+1}\shuffle\cdots w_k\]
and $w_j=dw'_j$. Clearly sgn$\,(x',v)=-$sgn$\,(x,v)$. As in the previous example, if $x=6\shuffle7\ol{123}\shuffle5\ol{4}$ and $v=576\ol{1243}\in x$, then we have $j=2$, and the sole element $6$ of $w_1$ is to the right of the leftmost $7$ of $w_2$ in $v$. Thus $\iota(x,v)=(x',v)$ with $x'=6\shuffle7\shuffle\ol{123}\shuffle5\ol{4}$.

Finally, if $(x,v)$ does not belong to both two cases above, then we must have $k=\|\al\|$, i.e. all the $w_i$ are of length 1, and every pairs of adjacent letters in $w_T$ are in the reverse order in $v$, thus $v$ is the reverse of $w_T$. For this last term, we let it be the unique fixed point of $\iota$, namely $\iota(x,v)=(x,v)$. It is clear that $\iota$ is a sign-reversing involution on $\Om_\al$.

Let $\pi=id-u\circ\ve$ satisfying $\pi(F_\al)=(1-\de_{\al,\emptyset})F_\al$. By the comultiplication rule for $F_\al$ and \eqref{anti}, we have
\[S(F_\al)=\sum_{k\geq1}(-1)^k\sum_{\al_1,\dots,\al_k\neq\emptyset\atop
\al_1|\cdots|\al_k=\al}F_{\al_1}\cdots F_{\al_k}.\]
Further by \eqref{muf} and the construction of $\iota$, we know that
\[S(F_\al)=\sum_{k\geq1}(-1)^k\sum_{T_1,\dots,T_k\neq\emptyset\atop
w_1\cdots w_k=w_T}\Ga(w_1)\cdots \Ga(w_k)=\sum_{(x,v)\in\Om_\al}\mb{sgn}\,(x,v)\Ga(v)=(-1)^{\|\al\|}\Ga(P_\al^*),\]
where $P_\al^*$ is the dual of $P_\al$, namely with the opposite partial order but the same labelling. Its one-line-word presentation can be obtained by reversing the original one. In order to expand $\Ga(P_\al^*)$ in terms of $F_\be$'s, we need to eliminate all opposite order of consecutive labels under overlines in $P_\al^*$. Indeed for each interval $\ol{u_1\dots u_r}$ in $P_\al^*$, $u_1>\cdots>u_r$, we can
substitute the formula from \eqref{gf},
\[\Ga(P_{1^r},\emptyset)=\sum_{j=0}^{r-1}(-1)^j{r-1\choose j}F_{0^{r-j}},\]
into its relative part in $\Ga(P_\al^*)$, and get our desired formula. Again for the example $\al=(2,0^3,1,0)$, $P_\al=67\ol{123}5\ol{4}$, then $P_\al^*=\ol{4}5\ol{321}76$, and we have
\[S(F_\al)=S(F_{20^310})=(-1)^7\Ga(\ol{4}5\ol{321}76)=-F_{010^31^2}+2F_{010^21^2}-F_{0101^2},\]
since $\Ga(P_{1^3},\emptyset)=F_{0^3}-2F_{0^2}+F_0$.
\end{proof}

As a special case, we obtain that
\[\De(F_{0^n})=\sum_{i=0}^n F_{0^i}\ot F_{0^{n-i}}\]
with $F_{0^0}=F_\emptyset=1$ as usual, and $S(F_{0^n})=(-1)^nM_{0^n}$. Besides, we note that $S^2=$id on WCQSym. Now we are in the position to discuss the Hopf algebra structure on WPQSym.

\begin{theorem}\label{hk}
WPQSym is a Hopf subalgebra of WCQSym, as
\[\De(K_\al)=\sum_{\be|\ga=\al}K_\be\ot K_\ga,\,\ve(K_{\al})=\de_{\al,\emptyset}\]
and
\[S(K_\al)=(-1)^{\|\al\|}K_{\al^t}\]
for any $\al\in\mW\mC$.
\end{theorem}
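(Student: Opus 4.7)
The counit formula $\varepsilon(K_\alpha)=\delta_{\alpha,\emptyset}$ follows immediately from the expansion \eqref{PM} together with $\varepsilon(M_\beta)=\delta_{\beta,\emptyset}$, since $\beta\unlhd\alpha$ with $\beta=\emptyset$ forces $\alpha=\emptyset$. So the work is in the two remaining formulas.

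For the comultiplication, my plan is to substitute $K_\alpha=\sum_{\eta\Vdash\alpha}t_{\alpha\eta}F_\eta$ from \eqref{PF} into Proposition \ref{CF} and regroup. This reduces the claim to the combinatorial identity
\[
\sum_{\eta\,:\,\xi|\zeta=\eta,\ \eta\Vdash\alpha}t_{\alpha\eta}
\;=\;\sum_{\beta|\gamma=\alpha,\ \xi\Vdash\beta,\ \zeta\Vdash\gamma}t_{\beta\xi}\,t_{\gamma\zeta},
\]
which unfolds via \eqref{PM}, \eqref{MF}, \eqref{CM} and the Vandermonde-type identity \eqref{c3} used in Proposition \ref{CF}. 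A conceptually cleaner route is through enriched $P$-partitions directly: any enriched $P_\alpha$-partition splits uniquely, for each $i\in\{0,1,\dots,\|\alpha\|\}$, as the pair obtained by restricting its values to a low range $\{x_1,\dots,x_N\}$ and a high range $\{x_{N+1},x_{N+2},\dots\}$; the lower part is an enriched $P_\beta$-partition and the upper part an enriched $P_\gamma$-partition, where $\beta|\gamma=\alpha$ is read off from whether the cut falls between two $\oplus$-factors of $P_\alpha$ (giving $\beta\gamma=\alpha$) or inside a single positive block $P_p$ (giving $\beta\vee\gamma=\alpha$). Summing over cut positions yields $\Delta(K_\alpha)=\sum_{\beta|\gamma=\alpha}K_\beta\otimes K_\gamma$, which together with Theorem \ref{str} shows WPQSym is a Hopf subalgebra.

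For the antipode, I would mimic the sign-reversing involution argument of Proposition \ref{CF} verbatim, now applied to WPFQFs. Takeuchi's formula \eqref{anti} combined with the comultiplication rule just established and the shuffle-on-chains multiplication \eqref{muk} gives
\[
S(K_\alpha)=\sum_{k\geq 1}(-1)^k\sum_{\substack{\alpha_1|\cdots|\alpha_k=\alpha\\ \alpha_i\neq\emptyset}}K_{\alpha_1}\cdots K_{\alpha_k}
=\sum_{(x,v)\in\Omega_\alpha}\mathrm{sgn}(x,v)\,\Lambda(v),
\]
indexed by exactly the same set $\Omega_\alpha$ as before. The Benedetti--Sagan involution $\iota$, which only depends on the row word of $P_\alpha$, cancels every non-fixed pair and leaves the single fixed term in which $v$ is the reverse of $w_{P_\alpha}$. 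Hence $S(K_\alpha)=(-1)^{\|\alpha\|}\Lambda(P_\alpha^*)$, where $P_\alpha^*$ is the dual labeled chain of $P_\alpha$. The theorem then follows from the identification
\[
\Lambda(P_\alpha^*)=K_{\alpha^t}.
\]

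The main obstacle is precisely this last identification. Under $\alpha^t=(\alpha^r)^c$, the positive parts of $\alpha$ are cut into runs of $1$'s and the zero blocks redistribute between the two ends, so $P_\alpha^*$ is not literally equal to $P_{\alpha^t}$ as labeled posets; the two only agree after invoking the equivalence of Lemma \ref{eq}. Verifying that $P_\alpha^*$ and $P_{\alpha^t}$ have identical zero blocks and identical peak sets, and that the induced subset $S=\bigcup_p P_p$ transfers correctly, requires a careful case analysis at the two extremal $C_p$'s and at each interior boundary $C_p\oplus P_p\oplus C_{p+1}$. A small sanity check $\alpha=(0,1)$, for which $P_\alpha^*=2\underline{1}=P_{(1,0)}$ and $S(K_{01})=2M_1+4M_{10}=K_{10}=K_{(0,1)^t}$, indicates exactly the kind of rearrangement that the general argument must package uniformly.
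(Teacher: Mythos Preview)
Your antipode argument via the Benedetti--Sagan involution is exactly what the paper does, and you correctly isolate the identification $\Lambda(P_\alpha^*)=K_{\alpha^t}$ as the remaining work. But your plan to close it with Lemma~\ref{eq} plus a boundary-by-boundary case analysis is slightly off target. Lemma~\ref{eq} compares two chains of the form $P_\gamma$, whereas $P_\alpha^*$ is \emph{not} of that form: each overlined block $\overline{c_p}$ has been reversed to a strictly decreasing run, which never arises as a $C_p$-block in any $P_\gamma$. The paper's clean shortcut is formula~\eqref{pk}: $\Lambda(P_{1^r},\emptyset)=\Lambda(P_r,\emptyset)=K_{0^r}$, i.e.\ on an overlined block the generating function depends only on the peak count, and both the increasing and the decreasing run have empty peak set. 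Hence each decreasing $\overline{c_p^{\,r}}$ may be replaced in situ by the increasing $\overline{c_p}$ without changing $\Lambda$. After this single substitution the resulting chain has exactly the descent set and overline pattern of $P_{\alpha^t}$ (the reversed non-overlined blocks $p_q^r$ supply the $1^{s_q}$'s, and the ascent at a junction $p_q^r\to p_{q-1}^r$ when $C_q=\emptyset$ produces the $\vee$ in $(1^{s_q})\vee(1^{s_{q-1}})$), so $\Lambda(P_\alpha^*)=K_{\alpha^t}$ on the nose---no appeal to Lemma~\ref{eq} is needed.

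For the comultiplication you take a genuinely different route from the paper. The paper expands $K_\alpha$ in the $M$-basis via \eqref{PM}, applies $\Delta$ termwise, and then verifies
\[
n_{\alpha,\,\xi\zeta}=\sum_{\substack{\beta|\gamma=\alpha\\ \xi\unlhd\beta,\ \zeta\unlhd\gamma}}n_{\beta\xi}\,n_{\gamma\zeta}
\]
by a case split on the last part of $\xi$ (zero, $\geq 2$, or $1$ in three subcases), each case reducing to an instance of the Vandermonde identity~\eqref{c3}. Your enriched $P$-partition argument---doubling the alphabet and reading off the cut position in the chain $P_\alpha$---is more conceptual and avoids those identities entirely; just be sure to state explicitly that cuts \emph{inside} a zero block $C_p$ (not only between $\oplus$-factors) also yield concatenations $\beta\gamma=\alpha$, so that all $\|\alpha\|+1$ cut positions are accounted for. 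Your first route through the $F$-basis and the $t_{\alpha\eta}$'s would also work but is heavier than either alternative, since the $t_{\alpha\eta}$ are alternating sums with no closed form.
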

\begin{proof}
By formulas \eqref{PM} and \eqref{CM}, we have
\[\De(K_\al)=\sum_{\eta\unlhd\al}n_{\al\eta}2^{\ell(\eta)}\De(M_\eta)=\sum_{\eta\unlhd\al}n_{\al\eta}2^{\ell(\eta)}
\sum_{\xi\zeta=\eta}M_\xi\ot M_\zeta=\sum_{\xi\zeta\unlhd\al}n_{\al,\,\xi\zeta}2^{\ell(\xi)}M_\xi\ot 2^{\ell(\zeta)}M_\zeta.\]
Next we follow the idea of proof of Prop. \ref{CF}, but have more cases to check. Fix $\xi,\zeta$ such that $\xi\zeta\unlhd\al$. If the last part of $\xi$ is larger than 1, then by definition there exist unique $\be,\ga\in\mW\mC$ such that $\be|\ga=\al,\,\xi\unlhd\be,\,\zeta\unlhd\ga$ and $n_{\al,\,\xi\zeta}=n_{\be\xi}n_{\ga\zeta}$.

Otherwise, if the last part of $\xi$ is zero, there exist pairs  $\be,\ga\in\mW\mC$ such that $\be|\ga=\be\ga=\al,\,\xi\unlhd\be,\,\zeta\unlhd\ga$ instead.
Suppose that the last interval of zeros in $\xi$ is $0^k$. It joins into a interval $0^j$ in $\xi\zeta$, as a contraction of $0^i$ in $\al$. In particular, we have $0<k\leq j\leq i$. The desired identity \beq\label{c4}
n_{\al,\,\xi\zeta}=\sum_{\be\ga=\al\atop\xi\unlhd\be,\,\zeta\unlhd\ga}n_{\be\xi}n_{\ga\zeta},
\eeq
is due to
\beq\label{c2}
{i-\ep+\nu\choose j-\ep+\nu}=\sum_{s=k}^{i-j+k}{s+\nu-1\choose k+\nu-1}{i-\ep-s\choose j-\ep-k}
=\sum_{p=0}^{i-j}{k+\nu-1+p\choose k+\nu-1}{i-\ep-k-p\choose j-\ep-k},
\eeq
where $\ep=\begin{cases}
1,&\mb{if such }0^i\mb{ in }\al\mb{ is at the tail},\\
0,&\mb{otherwise},
\end{cases},\,
\nu=\begin{cases}
1,&\mb{if such }0^i\mb{ in }\al\mb{ follows behind a part }1,\\
0,&\mb{otherwise},
\end{cases}$
and the index $s$ implies $\be$ has $0^s$ in the end.
It can be proved by setting $r_1=k+\nu,\,r_2=j-\ep-k+1,\,n=i-j$ in \eqref{c3} .

If the last part of $\xi$ is 1, it splits into three subcases.

(1) This part 1 in $\xi$ is obtained by splitting a positive part of $\al$.

(2) This part 1 in $\xi$ is obtained by merging an interval $0^r,1$ in $\al$ for $r>0$.

For subcases (1), (2), there exist unique $\be,\ga\in\mW\mC$ such that $\be|\ga=\al,\,\xi\unlhd\be,\,\zeta\unlhd\ga$ and $n_{\al,\,\xi\zeta}=n_{\be\xi}n_{\ga\zeta}$.

(3) This part 1 in $\xi$ is obtained by merging an interval $1,0^s$ in $\al$ for $s\geq0$. Suppose that such part 1 of $\al$ is followed behind by $0^i,\,i\in\bN$, and the beginning of $\zeta$ is $0^j,\,j\in\bN$. Clearly $j\leq i$.
Then there exist pairs $\be,\ga\in\mW\mC$ such that $\be|\ga=\be\ga=\al,\,\xi\unlhd\be,\,\zeta\unlhd\ga$, and we obtain \eqref{c4} by the identity
\[{i-\ep+1\choose j-\ep+1}=\sum_{s=0}^{i-j}{i-s-\ep \choose j-\ep}=\sum_{p=j}^i{p-\ep \choose j-\ep},\]
where $\ep$ is as above and the index $s$ indicates $\be$ has $0^s$ in the end. In summary,
\[\De(K_\al)=\sum_{\xi\zeta\unlhd\al}n_{\al,\,\xi\zeta}2^{\ell(\xi)}M_\xi\ot 2^{\ell(\zeta)}M_\zeta
=\sum_{\be|\ga=\al}\sum_{\xi\unlhd\be}n_{\be\xi}2^{\ell(\xi)}M_\xi\ot \sum_{\zeta\leq\ga}n_{\ga\zeta}2^{\ell(\zeta)}M_\zeta
=\sum_{\be|\ga=\al}K_\be\ot K_\ga.\]

For the formula of $S$ on the WPFQFs, we only need to slightly modify the proof in Prop. \ref{CF}, since $K_\al$ has the same comultiplication rule as $F_\al$. We use the previous sign-reversing involution $\iota$ on $\Om_\al$ to obtain that
\[\begin{split}S(K_\al)&=\sum_{k\geq1}(-1)^k\sum_{\al_1,\dots,\al_k\neq\emptyset\atop
\al_1|\cdots|\al_k=\al}K_{\al_1}\cdots K_{\al_k}=\sum_{k\geq1}(-1)^k\sum_{T_1,\dots,T_k\neq\emptyset\atop
w_1\cdots w_k=w_T}\La(w_1)\cdots \La(w_k)\\
&=\sum_{(x,v)\in\Om_\al}\mb{sgn}\,(x,v)\La(v)=(-1)^{\|\al\|}\La(P_\al^*).
\end{split}\]
where $P_\al^*$ is the dual of $P_\al$. In order to expand $\La(P_\al^*)$ in terms of $K_\be$'s, we also need to eliminate all the opposite order of consecutive labels under overlines in $P_\al^*$. What makes different is that  $\La(P_{1^r},\emptyset)=\La(P_r,\emptyset)=K_{0^r}$ by formula
\eqref{pk}. Thus for each interval $\ol{u_1\dots u_r}$ in $P_\al^*$, $u_1>\cdots>u_r$, we only need to replace it by $\ol{u_r\dots u_1}$ for the computation of $\Ga(P_\al^*)$, and immediately obtain our desired formula.
\end{proof}

For example, if $\al=(2,0^3,1,0)$, $P_\al=67\ol{123}5\ol{4}$, then $P_\al^*=\ol{4}5\ol{321}76$, and we have
\[\begin{split}
\De(K_{20^310})&=K_{20^310}\ot1+K_{20^31}\ot K_0+K_{20^3}\ot K_{10}+K_{20^2}\ot K_{010}\\
&+K_{20}\ot K_{0^210}+K_{2}\ot K_{0^310}+K_1\ot K_{10^310}+1\ot K_{20^310}.\\
S(K_{20^310})&=(-1)^7\La(\ol{4}5\ol{321}76)=-\La(\ol{4}5\ol{123}76)=K_{010^31^2}.
\end{split}\]

\begin{cor}
WPQSym$^0$ is a Hopf subalgebra of WCQSym$^0$, as
\[\De(K_{0^n})=\sum_{i=0}^n K_{0^i}\ot K_{0^{n-i}},\,\ve(K_{0^n})=\de_{n0}\]
and
\[S(K_{0^n})=(-1)^nK_{0^n},\,n\in\bN.\]
\end{cor}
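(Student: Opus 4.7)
The plan is to derive the corollary by directly specializing Theorem \ref{hk} to the case $\al=(0^n)$ and then check that the image of $K_{0^n}$ under $\De$ and $S$ indeed lies in WPQSym$^0\otimes$WPQSym$^0$ and WPQSym$^0$ respectively, confirming that WPQSym$^0$ is a Hopf subalgebra.

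First I would compute the coproduct. For $\al=(0^n)$, since $\al$ has no positive parts, the near-concatenation alternative in the relation ``$\be|\ga=\al$'' never occurs (the definition requires the joined parts to be positive). Hence ``$\be|\ga=(0^n)$'' reduces to genuine concatenations $\be\ga=(0^n)$, which are exactly the $n+1$ pairs $\be=(0^i),\ga=(0^{n-i})$ for $i=0,\dots,n$. Substituting into the formula of Theorem \ref{hk} gives
\[\De(K_{0^n})=\sum_{i=0}^n K_{0^i}\ot K_{0^{n-i}},\]
and $\ve(K_{0^n})=\de_{(0^n),\emptyset}=\de_{n,0}$ is immediate. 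In particular the coproduct visibly lies in WPQSym$^0\ot$WPQSym$^0$.

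Next I would handle the antipode. By Theorem \ref{hk}, $S(K_{0^n})=(-1)^{\|(0^n)\|}K_{(0^n)^t}$. Since $|(0^n)|=0$ and $\ell_0((0^n))=n$, we have $\|(0^n)\|=n$. For the transpose, note that $(0^n)^r=(0^n)$, and by the complement rule each entry $a_i=0$ contributes a factor $(1^0)=(0)$; since both sides of every bullet are zero-type, the convention forces plain concatenation, giving $(0^n)^c=(0^n)$. Therefore $(0^n)^t=((0^n)^r)^c=(0^n)$, and
\[S(K_{0^n})=(-1)^n K_{0^n}\in\mb{WPQSym}^0.\]

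Finally, combining these three computations with the multiplication rule already established in \eqref{mukz}, WPQSym$^0$ is closed under multiplication, comultiplication, counit, unit, and antipode, so it is a Hopf subalgebra of WCQSym$^0$. I do not anticipate any real obstacle: the only point that must be handled with slight care is the combinatorial verification that $(0^n)^t=(0^n)$ under the conventions of $\S1.1$, everything else is a direct specialization of Theorem \ref{hk}.
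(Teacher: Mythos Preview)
Your proposal is correct and is exactly the approach the paper intends: the corollary is stated without proof, as a direct specialization of Theorem \ref{hk} to $\al=(0^n)$, and your verification of the three formulas together with closure under multiplication via \eqref{mukz} makes this specialization explicit. The only detail the paper leaves implicit is the computation $(0^n)^t=(0^n)$, which you handle correctly using the conventions of \S1.1.
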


\subsection{Two Hopf algebra projections from WPQSym}
By \cite[Theorem 3.8]{GTY}, there exists a Hopf algebra projection $\varphi:$WCQSym
$\rw$QSym defined by
\[M_\al\stackrel{\varphi}{\mapsto}\begin{cases}
(-1)^{\ell_0(\al)}M_{\ol{\al}},&\mb{there is no }\be\in\mW\mC\mb{ such that }\al=(0,\be),\\
0,&\mb{otherwise}.
\end{cases}\]

By \eqref{FM} and the definition of $\varphi$, we have
\[\varphi(F_\al)=\sum_{\be\leq\al}c_{\al\be}\varphi(M_\be)=
\sum_{\be\leq\al,\,j_1=0}c_{\al\be}(-1)^{\ell_0(\be)}M_{\ol{\be}}
=\sum_{\ga\leq\ol{\al}}\lb\sum_{\be\leq\al,\,\ol{\be}=\ga,\,j_1=0}c_{\al\be}(-1)^{\ell_0(\be)}\rb M_\ga.\]
Since $c_{\al\be}={i_1\choose j_1}\cdots{i_k \choose j_k}{i_{k+1}-1\choose j_{k+1}-1}$, for any $\ga\leq\ol{\al}$,
\[\sum_{\be\leq\al,\,\ol{\be}=\ga,\,j_1=0}c_{\al\be}(-1)^{\ell_0(\be)}
=\prod_{s=2}^k\lb\sum_{j_s=0}^{i_s}(-1)^{j_s}{i_s\choose j_s}\rb\sum_{j_{k+1}=0}^{i_{k+1}}(-1)^{j_{k+1}}{i_{k+1}-1\choose j_{k+1}-1}
=\prod_{s=2}^k\de_{i_s,0}(\de_{i_{k+1},0}-\de_{i_{k+1},1}).\]
Hence, we obtain that
\[\varphi(F_\al)=\begin{cases}
(-1)^jF_{\al_0},&\mb{if }\al=(0^i,\al_0,0^j)\mb{ for some }i\in\bN,\,j\in\{0,1\}\mb{ and }\al_0\in\mC\setminus\{\emptyset\},\\
\de_{\al,\emptyset},&\mb{otherwise}.
\end{cases}\]

According to Theorem \ref{str}, we can define a linear map $\rho:$WPQSym$\rw$PQSym by
\[\rho(K_\al)=\begin{cases}
(-1)^j2^{2-\de_{i0}-\de_{j0}}K_{\al_0},&\mb{if }\al=(0^i,\al_0,0^j)\mb{ for some }i,j\in\bN\mb{ and }\al_0\in\mC\setminus\{\emptyset\},\\
\de_{\al,\emptyset},&\mb{otherwise}.
\end{cases}\]
\begin{theorem}
The map $\rho$ is a Hopf algebra projection from WPQSym onto PQSym.
\end{theorem}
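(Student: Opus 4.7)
The plan is to verify that $\rho$ is a well-defined $\bZ$-linear bialgebra surjection; compatibility with the antipodes will then follow automatically for these connected graded bialgebras. For well-definedness, Lemma \ref{eq} tells us that two WPFQFs $K_\al,K_\be$ coincide in WPQSym exactly when $\al,\be$ share the same zero-block structure and peak set, and under such equivalences both the prefactor $(-1)^j 2^{2-\de_{i0}-\de_{j0}}$ and the class $K_{\al_0}$ in PQSym appearing in the definition are preserved. The unit, counit, and surjectivity conditions are then immediate: $\rho(K_\emptyset)=1$, $\ve\circ\rho=\ve$, and $\rho(K_{\al_0})=K_{\al_0}$ for any $\al_0\in\mC$, so $\rho$ is in fact a retraction of the natural inclusion PQSym $\hookrightarrow$ WPQSym.

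For the coalgebra compatibility $\De\circ\rho=(\rho\ot\rho)\circ\De$ I would feed the formula $\De K_\al=\sum_{\be|\ga=\al}K_\be\ot K_\ga$ of Theorem \ref{hk} through $\rho\ot\rho$. The crucial restriction is that $\rho(K_\be)\neq 0$ forces $\be$ to be empty or to have exactly one positive block, so most splits die. For single-block $\al=(0^i,\al_0,0^j)$ the surviving splits (the two extremes $\be=\emptyset$ or $\ga=\emptyset$, strict concatenations inside $\al_0$, and $\vee$-splits within a single part of $\al_0$) reassemble precisely to $(-1)^j 2^{2-\de_{i0}-\de_{j0}}\De K_{\al_0}=\De\rho(K_\al)$. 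For $\al=(0^r)$ with $r\geq 1$ both sides vanish trivially. For $\al$ with $k\geq 3$ positive blocks, no split can leave at most one block on each side, so everything dies; for $k=2$ the only surviving splits lie inside the unique inter-block zero gap $0^{i_2}$, and summing them yields the alternating factor $\sum_{p=0}^{i_2}(-1)^p 2^{-\de_{p,0}-\de_{i_2-p,0}}$, which a short direct check shows equals zero whenever $i_2\geq 1$; hence both sides vanish.

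The main obstacle is multiplicativity, $\rho(K_\al\cdot K_\be)=\rho(K_\al)\rho(K_\be)$. Starting from the shuffle formula $K_\al K_\be=\sum_{u\in(P_\al\sim\be)\shuffle(P_\be\smile\al)}\La(u)$ from Theorem \ref{str} and expanding each $\La(u)$ in the $K$-basis via \eqref{pk}, I would argue case by case. When both $\al,\be$ are of single-block form, the shuffles whose underlying weak composition has a single positive block are exactly those interleaving the two positive groups contiguously, and these together with Stembridge's peak multiplication in PQSym applied to $K_{\al_0}K_{\be_0}$ reproduce $\rho(K_\al)\rho(K_\be)$ after collecting the sign and $2$-power prefactors. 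When one of $\al,\be$ is a pure-zero $(0^r)$ or has $\geq 2$ positive blocks, one must establish $\rho(K_\al K_\be)=0$; here the strategy is to group the shuffle outputs by their inter-block zero segments and invoke the same alternating-sum identity as in the coalgebra step to force the required cancellation. The genuine subtlety is precisely this bookkeeping, since shuffles can produce very different block structures whose signs and $2$-powers must be tracked simultaneously. Once multiplicativity is established, combined with the coalgebra compatibility, surjectivity, and the automatic antipode compatibility in the connected graded setting, the proof that $\rho$ is a Hopf algebra projection onto PQSym is complete.
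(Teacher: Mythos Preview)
Your coalgebra argument is essentially the paper's: both of you feed the formula $\Delta K_\al=\sum_{\be|\ga=\al}K_\be\ot K_\ga$ through $\rho\ot\rho$, kill the splits where one side has two or more positive blocks, and for the two-block case reduce to the vanishing of the alternating sum $\sum_{p=0}^{i}(-1)^p 2^{2-\de_{p,0}-\de_{i-p,0}}$. Your shortcut on the antipode (automatic for connected graded bialgebras) is also fine; the paper verifies it by hand, but that is not needed.

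The genuine gap is in your multiplicativity argument for the case where both $\al=(0^i,\al_0,0^j)$ and $\be=(0^k,\be_0,0^l)$ lie in $\mW\mC_0$. You correctly isolate the surviving shuffles as those of the form $xwy$, with $x$ a shuffle of the two front zero-blocks, $w$ a shuffle of the two positive chains, and $y$ a shuffle of the two back zero-blocks, and you correctly identify the $w$-sum as Stembridge's product $K_{\al_0}K_{\be_0}$ in PQSym. But the phrase ``after collecting the sign and $2$-power prefactors'' hides exactly the nontrivial content. Each overlined segment $x$ (resp.\ $y$) must be expanded via formula~\eqref{pk}, and the paper shows this amounts to substituting the explicit product \eqref{mukz} for $K_{0^i}\cdot K_{0^k}$ and $K_{0^j}\cdot K_{0^l}$. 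One is then left to prove the identity
\[
\sum_{s=0}^{i}(-1)^s\binom{i+k-s}{i}\binom{i}{s}\frac{i+k-2s}{i+k-s}\,2^{1-\de_{i+k-2s,0}}=2^{2-\de_{i0}-\de_{k0}},
\]
which the paper establishes by an induction on $i$ together with an auxiliary identity $f_{ik}=\de_{i0}$. This is not the simple alternating sum from the coalgebra step, and nothing in your sketch indicates how you would obtain it. Similarly, your treatment of the pure-zero case $\al=(0^r)$ by ``the same alternating-sum identity'' is not justified: once the shuffles are expanded via \eqref{pk} the coefficients involve the same binomial-weighted sums, not the elementary $\sum(-1)^p 2^{-\de_{p,0}-\de_{q,0}}$. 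So while your overall architecture matches the paper's, the decisive computational step for the algebra map is missing.
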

\begin{proof}
It is clear that $\rho$ is a linear projection such that $\rho|_{\bs{PQSym}}=\mb{id}_{\bs{PQSym}}$. We first prove that it is an algebra homomorphism.
Denote $\mW\mC_0$ the subset of $\mW\mC$ consisting of those $\al$ of the form $(0^i,\al_0,0^j)$ for some $i,j\in\bN$ and $\al_0\in\mC\setminus\{\emptyset\}$. Assume that $\al\in\mW\mC\backslash(\mW\mC_0\cup\{\emptyset\})$, then there exist zeros lying between two positive parts of $\al$. For any $\be\in\mW\mC$, by the multiplication rule of WPFQFs, we have $\ga\in\mW\mC\backslash(\mW\mC_0\cup\{\emptyset\})$ for any term $K_\ga$ appeared in the expansion of $K_\al\cdot K_\be$. Hence, $\rho(K_\al\cdot K_\be)=\rho(K_\al)\cdot\rho(K_\be)=0$ in this case.

Otherwise, given $\al,\be\in\mW\mC_0$, we can set $\al=(0^i,\al_0,0^j),\,\be=(0^k,\be_0,0^l)$ for some $i,j,k,l\in\bN$ and $\al_0,\be_0\in\mC\setminus\{\emptyset\}$,
then by \eqref{muk} and the definition of $\rho$,
\[\rho(K_\al\cdot K_\be)=\sum_{u\in(P_\al\sim\be)\shuffle(P_\be\smile\al)}\rho(\La(u))
=\sum_{x\in P_{0^i}\shuffle(P_{0^k}\smile0^{i+j})\atop y\in P_{0^j}\shuffle(P_{0^l}\smile0^{i+j+k})}\sum_{w\in (P_{\al_0}\sim0^{k+l})\shuffle(P_{\be_0}\smile0^{k+l+\|\al\|})}\rho(\La(xwy)).\]
Now substituting \eqref{mukz} into it, we have
\[\rho(K_\al\cdot K_\be)=g_{\al\be}\sum_{w\in P_{\al_0}\shuffle(P_{\be_0}\smile\al_0)}\La(w)=g_{\al\be}K_{\al_0}\cdot K_{\be_0},\]
where the coefficient
\begin{align*}
&g_{\al\be}=\lb\sum_{s=0}^i(-1)^s {i+k-s \choose i}{i \choose s}\dfrac{i+k-2s}{i+k-s}2^{1-\de_{i+k-2s,0}}\rb\\
&\quad\times\lb\sum_{t=0}^j(-1)^t {j+l-t \choose j}{j \choose t}\dfrac{j+l-2t}{j+l-t}
(-1)^{j+l-2t}2^{1-\de_{j+l-2t,0}}\rb.
\end{align*}
Next we prove that $g_{\al\be}=(-1)^{j+l}2^{4-\de_{i0}-\de_{j0}-\de_{k0}-\de_{l0}}$, thus
$\rho(K_\al\cdot K_\be)=\rho(K_\al)\cdot\rho(K_\be)$ for any $\al,\be\in\mW\mC$. By the similarity of those two factors in $g_{\al\be}$, it is certainly enough to show that
\[\sum_{s=0}^i(-1)^s {i+k-s \choose i}{i \choose s}\dfrac{i+k-2s}{i+k-s}2^{1-\de_{i+k-2s,0}}=2^{2-\de_{i0}-\de_{k0}}.\]
Without loss of generality, suppose that $i\leq k$. We prove it by induction on $i$. When $i=0$ it is clear.
For fixed $i<k$, assume the identity holds. Then we check the case for $i+1\leq k$.
\begin{align*}
\sum_{s=0}^{i+1}&(-1)^s {i+1+k-s \choose i+1}{i+1 \choose s}\dfrac{i+1+k-2s}{i+1+k-s}2^{1-\de_{i+1+k-2s,0}}\\
&=2\sum_{s=0}^{i+1}(-1)^s {i+1+k-s \choose i+1}\lb{i \choose s}+{i \choose s-1}\rb\dfrac{i+1+k-2s}{i+1+k-s}\\
&=2\sum_{s=0}^i(-1)^s {i+k-s \choose i}{i \choose s}\dfrac{(i+1)(i+k-2s)+k}{(i+1)(i+k-s)}\\
&=2\lb2^{1-\de_{i0}}+\dfrac{1}{i+1}\sum_{s=0}^i(-1)^s {i+k-1-s \choose k-1}{k \choose s}\rb
=2^{2-\de_{i0}}+2\de_{i0}=4,
\end{align*}
where we use the induction hypothesis
\[\sum_{s=0}^i(-1)^s {i+k-s \choose i}{i \choose s}\dfrac{i+k-2s}{i+k-s}=2^{1-\de_{i0}},\,\mb{as }i<k,\]
and the identity
\[f_{ik}:=\sum_{s=0}^i(-1)^s {i+k-1-s \choose k-1}{k \choose s}=\de_{i0},\,i\geq0,k>0.\]
Such identity can be confirmed as follows. It is clear that $f_{0k}=1$. For any $i>0$,
\begin{align*}
&f_{ik}=\sum_{s=0}^i(-1)^s\left[{i+k-2-s \choose k-1}{k \choose s}
+{i+k-2-s \choose k-2}\lb{k-1 \choose s}+{k-1\choose s-1}\rb\right]\\
&\quad=\sum_{s=0}^i(-1)^s\left[{i+k-2-s \choose k-1}{k \choose s}
+\lb{i+k-2-s \choose k-2}-{i+k-3-s \choose k-2}\rb{k-1 \choose s}\right]\\
&\quad=f_{i-1,k}+f_{i,k-1}-f_{i-1,k-1}.
\end{align*}
It means that $f_{ik}-f_{i,k-1}=f_{i-1,k}-f_{i-1,k-1}=\cdots=f_{0,k}-f_{0,k-1}=0$, and thus
$f_{ik}=f_{i,k-1}=\cdots=f_{i1}=0$ for any $i,k>0$. Eventually, we finish the induction proof that $\rho$ is an algebra map.

Next we need to show that $\rho$ is a coalgebra map.
First assume that $\al=(0^i,\al_0,0^j)\in\mW\mC_0$, then by Theorem \ref{hk} and the definition of $\rho$,
\begin{align*}
&\De\circ\rho(K_\al)=(-1)^j2^{2-\de_{i0}-\de_{j0}}\De(K_{\al_0})=(-1)^j2^{2-\de_{i0}-\de_{j0}}\sum_{\be_0|\ga_0=\al_0}
K_{\be_0}\ot K_{\ga_0}\\
&\quad=
\rho(K_\al)\ot1+1\ot\rho(K_\al)+\sum_{\be_0|\ga_0=\al_0\atop
\be_0,\ga_0\neq\emptyset}\rho(K_{(0^i,\,\be_0)})\ot\rho(K_{(\ga_0,\,0^j)})\\
&\quad=\sum_{\be|\ga=\al}\rho(K_\be)\ot\rho(K_\ga)=(\rho\ot\rho)\circ\De(K_\al).
\end{align*}
Otherwise, if $\al\in\mW\mC\backslash(\mW\mC_0\cup\{\emptyset\})$, we only have to deal with the case when $\al=(a,0^i,b),\,a,b,i\in\bP$. For general $\al$, it can be done by the same argument.
\begin{align*}
&(\rho\ot\rho)\circ\De(K_\al)=\sum_{\be|\ga=\al}\rho(K_\be)\ot\rho(K_\ga)
=\sum_{s=0}^i\rho(K_{(a,0^s)})\ot\rho(K_{(0^{i-s},b)})\\
&\quad=\lb\sum_{s=0}^i(-1)^s2^{2-\de_{s0}-\de_{i-s,0}}\rb K_a\ot K_b
=\lb2(1+(-1)^i)+4\sum_{s=1}^{i-1}(-1)^s\rb K_a\ot K_b
=0=\De\circ\rho(K_\al).
\end{align*}
Hence, we have proven that $\rho$ is a coalgebra map. Moveover, if $\al=(0^i,\al_0,0^j)\in\mW\mC_0$,
\begin{align*}
&\rho\circ S(K_\al)=(-1)^{\|\al\|}\rho(K_{\al^t})=
(-1)^{i+j+|\al_0|}\rho(K_{(0^j,\al_0^t,0^i)})\\
&\quad=(-1)^{j+|\al_0|}2^{2-\de_{i0}-\de_{j0}}K_{\al_0^t}
=(-1)^j2^{2-\de_{i0}-\de_{j0}}S(K_{\al_0})=S\circ\rho(K_\al).
\end{align*}
Other cases for $\al$ are easy to check, and thus $\rho\circ S=S\circ\rho$. Finally we show that $\rho$ is a Hopf algebra map.
\end{proof}

Fix a scalar $b\in\bQ$ and define a linear transformation $\phi_b$ of WCQSym$^0_\bQ$ as
\[\phi_b(F_\emptyset)=K_\emptyset=1,\,\phi_b(F_{0^n})=\sum_{i=1}^nb_{ni}K_{0^i},\,n\in\bP,\]
where the coefficients $b_{ni}\in\bQ$ are defined recursively by
\beq\label{re} b_{11}=b,\,(n+1)b_{n+1,j}=nb_{nj}+jb(b_{n,j-1}-b_{n,j+1}),\,j=1,\dots,n+1,\eeq
with the convention that $b_{n0}=\de_{n0},\,b_{n,n+1}=b_{n,n+2}=0$ for any $n\in\bP$. In particular, we have
\[b_{nn}=b^n,b_{n+1,n}=\dfrac{n}{2}b^n,b_{n+2,n}=\dfrac{1}{24}((3 n^2+5n)b^n-8nb^{n+2}).\]
Actually fix $k\in\bP$, the numbers $b_{n+k,n},\,n\in\bP$ can be computed simultaneously, as we let $e_{n+k,n}:=\prod_{i=1}^k(n+i)b_{n+k,n}$, then
\[e_{n+k,n}-be_{n+k-1,n-1}=(n+k-1)e_{n+k-1,n}-n(n+1)be_{n+k-1,n+1}.\]
When $b_{n+i,n},\,n\in\bP,i<k$ has been calculated, it gives the formula of $b_{n+k,n},\,n\in\bP$.

\begin{prop}\label{hoz}
For any $b\neq0$, the map $\phi_b$ is a Hopf algebra automorphism of WCQSym$^0_\bQ$.
\end{prop}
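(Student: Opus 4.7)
The plan is to identify WCQSym$^0_\bQ$ explicitly as the polynomial Hopf algebra $\bQ[F_0]$ on the primitive generator $F_0 = M_0$, and then to recognize $\phi_b$ as the unique algebra endomorphism sending $F_0$ to $bK_0 = 2bF_0$; the claim that $\phi_b$ is a Hopf algebra automorphism for $b \ne 0$ will then be automatic.

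First I would establish the structure of WCQSym$^0_\bQ$. From the example following Lemma \ref{mufz}, $F_0 \cdot F_{0^n} = (n+1)F_{0^{n+1}} - nF_{0^n}$, so $(n+1)F_{0^{n+1}} = (F_0 + n)F_{0^n}$, and by induction $F_{0^n} = \binom{F_0+n-1}{n}$. Hence every $F_{0^n}$ is a polynomial in $F_0$, and combined with $\bQ$-linear independence of the $F_{0^n}$ this identifies WCQSym$^0_\bQ$ with the polynomial ring $\bQ[F_0]$. Since $\De F_0 = F_0 \ot 1 + 1 \ot F_0$, it is in fact the polynomial Hopf algebra on the primitive generator $F_0$, and $K_0 = 2F_0$ is also primitive.

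Next I would introduce the natural candidate. Any Hopf endomorphism of $\bQ[F_0]$ is determined by the image of $F_0$, which must itself be primitive and hence a scalar multiple of $F_0$. Let $\psi_b$ be the algebra endomorphism with $\psi_b(F_0) = bK_0 = 2bF_0$; since $bK_0$ is primitive, $\psi_b$ is automatically a bialgebra map, and it is bijective precisely when $b \ne 0$. It then remains to show $\phi_b = \psi_b$. Applying $\psi_b$ to $(n+1)F_{0^{n+1}} = (F_0 + n)F_{0^n}$ gives $(n+1)\psi_b(F_{0^{n+1}}) = (bK_0 + n)\psi_b(F_{0^n})$; writing $\psi_b(F_{0^n}) = \sum_j c_{nj}K_{0^j}$ and using the product formula $K_0 \cdot K_{0^j} = (j+1)K_{0^{j+1}} - (j-1)K_{0^{j-1}}$ (the $m=1$ case of \eqref{mukz}), reindexing and collecting coefficients of $K_{0^j}$ yields $(n+1)c_{n+1,j} = nc_{nj} + bj(c_{n,j-1} - c_{n,j+1})$ with $c_{11}=b$, which is exactly the recurrence \eqref{re} defining $b_{nj}$. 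Thus $c_{nj} = b_{nj}$ and $\phi_b = \psi_b$.

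The main obstacle I anticipate is the bookkeeping around the boundary conventions $b_{n0} = \de_{n0}$ and $b_{n,n+1} = b_{n,n+2} = 0$ appearing in \eqref{re}: one has to check these agree with the values coming from $\psi_b$. On the $\psi_b$ side they are forced by the fact that $\psi_b(F_{0^n})$ is a polynomial of degree $n$ in $F_0$, hence in $K_0$, so $c_{nj}$ vanishes for $j > n$ and has no constant term when $n \ge 1$. Once these boundary issues are handled, the remainder of the argument is essentially a translation between the two $\bQ$-bases $\{F_{0^r}\}$ and $\{K_{0^r}\}$ of the polynomial Hopf algebra.
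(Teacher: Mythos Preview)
Your proof is correct and takes a genuinely different route from the paper's.

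The paper proceeds by direct verification: it first checks that $\phi_b(F_0\cdot F_{0^n})=\phi_b(F_0)\cdot\phi_b(F_{0^n})$ is equivalent to the defining recurrence \eqref{re}, then bootstraps by induction on $m$ (using associativity of the product) to $\phi_b(F_{0^m}\cdot F_{0^n})=\phi_b(F_{0^m})\cdot\phi_b(F_{0^n})$, argues bijectivity from the upper-triangular transition matrix with diagonal $b^n$, and finally runs a second induction on $n$ to establish the coalgebra identity $\De\circ\phi_b=(\phi_b\ot\phi_b)\circ\De$. Your approach instead recognizes WCQSym$^0_\bQ$ as the polynomial Hopf algebra $\bQ[F_0]$ on a single primitive generator (via $F_{0^n}=\binom{F_0+n-1}{n}$), so that the algebra map $\psi_b\colon F_0\mapsto 2bF_0$ is automatically a Hopf automorphism for $b\neq0$; you then identify $\phi_b$ with $\psi_b$ by showing the expansion coefficients of $\psi_b(F_{0^n})$ in the $K_{0^j}$-basis satisfy the same recurrence \eqref{re} with the same boundary data. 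This is more conceptual: it explains \emph{why} the recurrence encodes a Hopf map, and collapses the paper's two separate inductions into a single structural observation. The paper's argument, by contrast, is more self-contained and does not require explicitly identifying the algebra as polynomial. Your handling of the boundary conventions is fine: $c_{n0}=0$ for $n\ge1$ follows since $F_{0^n}$, hence $\psi_b(F_{0^n})$, has $F_0$ as a factor, and $c_{nj}=0$ for $j>n$ since $\psi_b(F_{0^n})$ has degree $n$ in $F_0$ while $K_{0^j}$ has degree exactly $j$.
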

\begin{proof}
First we show that $\phi_b$ is an algebra automorphism of WCQSym$^0_\bQ$. Indeed,
\[\phi_b(F_0\cdot F_n)=(n+1)\phi_b(F_{0^{n+1}})-n\phi_b(F_{0^n})
=\sum_{j=1}^{n+1}((n+1)b_{n+1,j}-nb_{nj})K_{0^j}.\]
On the other hand,
\[\begin{split}
\phi_b(F_0)\cdot \phi_b(F_{0^n})&=b_{11}K_0\cdot\sum_{i=1}^nb_{ni}K_{0^i}
=b_{11}\sum_{i=1}^nb_{ni}((i+1)K_{0^{i+1}}-(i-1)K_{0^{i-1}})\\
&=b_{11}\lb\sum_{j=2}^{n+1}jb_{n,j-1}K_{0^j}-\sum_{j=1}^{n-1}jb_{n,j+1}K_{0^j}\rb
=b_{11}\lb\sum_{j=1}^{n+1}j(b_{n,j-1}-b_{n,j+1})K_{0^j}\rb.
\end{split}\]
Hence, $\phi_b(F_0\cdot F_{0^n})=\phi_b(F_0)\cdot \phi_b(F_{0^n})$ is equivalent to the relation \eqref{re} with $b_{11}=b$. Next we show that
such special case is sufficient to guarantee that $\phi_b$ is an algebra endomorphism.
Actually for any $m,n\in\bP$, we prove by induction on $m$ that
\[\phi_b(F_{0^m}\cdot F_{0^n})=\phi_b(F_{0^m})\cdot \phi_b(F_{0^n}).\]
The case when $m=1$ is clear. Now assume that it is true for any $m\leq r$, then
\[\begin{split}
\phi_b&((F_0\cdot F_{0^r})\cdot F_{0^n})=
\phi_b(((r+1)F_{0^{r+1}}-rF_{0^r})\cdot F_{0^n})\\
&=(r+1)\phi_b(F_{0^{r+1}}\cdot F_{0^n})-r\phi_b(F_{0^r}\cdot F_{0^n})
=(r+1)\phi_b(F_{0^{r+1}}\cdot F_{0^n})-r\phi_b(F_{0^r})\cdot\phi_b(F_{0^n}).
\end{split}\]
Meanwhile,
\[\begin{split}
\phi_b(F_0&\cdot(F_{0^r}\cdot F_{0^n}))=
\phi_b(F_0)\cdot\phi_b(F_{0^r}\cdot F_{0^n})
=\phi_b(F_0)\cdot(\phi_b(F_{0^r})\cdot\phi_b(F_{0^n}))\\
&=(\phi_b(F_0)\cdot\phi_b(F_{0^r}))\cdot\phi_b(F_{0^n})=
((r+1)\phi_b(F_{0^{r+1}})-r\phi_b(F_{0^r}))\cdot\phi_b(F_{0^n})\\
&=(r+1)\phi_b(F_{0^{r+1}})\cdot\phi_b(F_{0^n})-r\phi_b(F_{0^r})\cdot\phi_b(F_{0^n}).
\end{split}\]
By the associativity of multiplication, we obtain that $\phi_b(F_{0^{r+1}}\cdot F_{0^n})=\phi_b(F_{0^{r+1}})\cdot \phi_b(F_{0^n})$. By definition if $b\neq0$, the matrix of $\phi_b$ with respect to the two bases $\{F_{0^n}\}_{n\in\bN}$ and $\{K_{0^n}\}_{n\in\bN}$ is upper triangular with nonzero diagonal entries $b_{nn}=b^n$, thus $\phi_b$ becomes an algebra automorphism of WCQSym$^0_\bQ$.

Next we prove that $\phi_b$ is a coalgebra map. First note that $\ve\circ\phi_b=\ve$ is obvious. Then we prove by induction on $n\in\bP$ that
\[\De\circ\phi_b(F_{0^n})=(\phi_b\ot\phi_b)\circ\De(F_{0^n}).\]
The case when $n=1$ is clear. Assume that it is true for any $n\leq r$, then
\[\De\circ\phi_b(F_0\cdot F_{0^r})=\De\circ\phi_b((r+1)F_{0^{r+1}}-rF_{0^r})
=(r+1)\De\circ\phi_b(F_{0^{r+1}})-r(\phi_b\ot\phi_b)\circ\De(F_{0^r}).\]
On the other hand, by induction hypothesis we have
\begin{align*}
&\De\circ\phi_b(F_0\cdot F_{0^r})
=\De(\phi_b(F_0)\cdot\phi_b(F_{0^r}))=\De\circ\phi_b(F_0)\cdot\De\circ\phi_b(F_{0^r})\\
&\quad=(\phi_b\ot\phi_b)(F_0\ot1+1\ot F_0)\cdot(\phi_b\ot\phi_b)\lb\sum_{i=0}^rF_{0^i}\ot F_{0^{r-i}}\rb\\
&\quad=(\phi_b\ot\phi_b)\lb(F_0\ot1+1\ot F_0)\cdot\sum_{i=0}^rF_{0^i}\ot F_{0^{r-i}}\rb\\
&\quad=(\phi_b\ot\phi_b)\lb\sum_{i=0}^r((i+1)F_{0^{i+1}}-iF_{0^i})\ot F_{0^{r-i}}+F_{0^i}\ot((r+1-i)F_{0^{r+1-i}}-(r-i)F_{0^{r-i}})\rb\\
&\quad=(r+1)(\phi_b\ot\phi_b)\lb\sum_{i=0}^{r+1}F_{0^i}\ot F_{0^{r+1-i}}\rb-r(\phi_b\ot\phi_b)\lb \sum_{i=0}^rF_{0^i}\ot F_{0^{r-i}}\rb\\
&\quad=(r+1)(\phi_b\ot\phi_b)\circ\De(F_{0^{r+1}})-r(\phi_b\ot\phi_b)\circ\De(F_{0^r}).
\end{align*}
Hence, $\De\circ\phi_b(F_{0^{r+1}})=(\phi_b\ot\phi_b)\circ\De(F_{0^{r+1}})$, and by induction $\phi_b$ is a coalgebra map. Moreover, for $\phi_b$ is an automorphism, we immediately have $S\circ\phi_b=\phi_b\circ S$, and thus finish the proof.
\end{proof}

\begin{rem}
In particular, we have $\phi_{1/2}=$id. By relations \eqref{ik}, \eqref{re}, one can easily check that
\[\phi_{1/2}(F_{0^n})=2^{-n}\sum_{i=1}^n{n-1\choose i-1}K_{0^n}=F_{0^n}\]
for any $n\in\bP$, thus $\phi_{1/2}=$id.
\end{rem}

Now it is natural to ask if $\phi_b$ can be extended to be a Hopf algebra surjection from WCQSym$_\bQ$ to WPQSym$_\bQ$, as an extension of Stembridge's descent-to-peak map $\te$. In fact, it is nearly guaranteed by Prop. \ref{hoz}. Here for simplicity, we only extend $\phi_{1/2}$, the identity on WCQSym$^0_\bQ$, to the whole algebra WCQSym$_\bQ$.

As $\{F_\al\}_{\al\in\mC}$ is a basis of WCQSym, we can define a linear map
\[\Te:\mb{WCQSym}_\bQ\rw\mb{WPQSym}_\bQ,\,F_\al\mapsto\sum_{\ol{\be}=\ol{\al}}u_{\al\be}K_\be\]
for $\al=(0^{i_1},s_1,\dots,0^{i_k},s_k,0^{i_{k+1}})$ with all $i_p\in\bN$ and $s_q\in\bP$, where $u_{\al\be}=2^{-\ell_0(\al)}\prod\limits_{p=1}^{k+1}{i_p-1\choose j_p-1}$ for  $\be=(0^{j_1},s_1,\dots,0^{j_k},s_k,0^{j_{k+1}})$. In particular, $u_{\al\be}>0$ only when $1\leq j_p\leq i_p$ or $j_p=i_p=0$ for any $p\in[k+1]$.

\begin{theorem}
The map $\Te$ is a Hopf algebra surjection from WCQSym$_\bQ$ onto WPQSym$_\bQ$. Moreover, the restriction of $\Te$ on QSym$_\bQ$ is Stembridge's descent-to-peak map $\te$, and the commutation equality $\rho\circ\Te=\te\circ\varphi$ holds.
\end{theorem}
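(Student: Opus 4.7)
The plan is to establish, in order, linearity, the restriction $\Te|_{\bs{QSym}_\bQ}=\te$, the coalgebra compatibility, the algebra compatibility, surjectivity (together with automatic antipode compatibility), and finally the identity $\rho\circ\Te=\te\circ\varphi$. Linearity is immediate from the definition on the $F$-basis. For the restriction to $\bs{QSym}_\bQ$: if $\al\in\mC$ then $\ell_0(\al)=0$ and every block exponent $i_p$ vanishes, so the only $\be$ with $\ol{\be}=\ol{\al}$ and $u_{\al\be}\neq0$ is $\be=\al$ with $u_{\al\al}=1$ under the convention ${-1\choose-1}=1$; hence $\Te(F_\al)=K_\al=\te(F_\al)$.

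Next I would verify the coalgebra property. Both $\De(F_\al)$ of Prop.~\ref{CF} and $\De(K_\al)$ of Thm.~\ref{hk} admit the same cut-wise expansion $\sum_{\be|\ga=\al}$, and the coefficient $u_{\al\be}=2^{-\ell_0(\al)}\prod_p{i_p-1\choose j_p-1}$ factorizes blockwise. After sorting the expansion of $(\Te\ot\Te)\De(F_\al)$ by the target pair $(K_\xi,K_\zeta)$ and comparing with $\De(\Te(F_\al))$, the needed identity reduces block-by-block to a Vandermonde-type sum. The crucial case is when a cut $\be|\ga=\al$ falls strictly inside a zero-block $0^i$, producing $\be$ ending in $0^r$ and $\ga$ starting with $0^{i-r}$; setting the corresponding zero-counts in $\xi,\zeta$ to $s_1,s_2$, the needed identity
\[\sum_{r=0}^i{r-1\choose s_1-1}{i-r-1\choose s_2-1}={i-1\choose s_1+s_2-1}\]
is the instance of \eqref{c3} already used in Prop.~\ref{CF}. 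Boundary cuts (concatenation or near-concatenation at a positive part) collapse to single binomial identities.

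The algebra property $\Te(F_\al\cdot F_\be)=\Te(F_\al)\cdot\Te(F_\be)$ is the hardest step and where I expect to spend the most effort. The base case $\al,\be\in\mC$ is exactly Stembridge's statement that $\te$ is a ring map on $\bs{QSym}$. For the general case I would use the poset-shuffle formulas \eqref{muf} and \eqref{muk}: both products expand over the common shuffle set $(P_\al\sim\be)\shuffle(P_\be\smile\al)$, so it suffices to compare $\Ga(u)$ with $\La(u)$ term by term under $\Te$. A generic $u$ in this shuffle set is not of the form $P_\ga$ — its overlined intervals may be decreasing — and $\Ga(u),\La(u)$ are handled by substituting the formulas for $\Ga(P_\ga,\emptyset)$ from \eqref{gf} and $\La(P_\ga,\emptyset)$ from \eqref{pk} on each overlined interval. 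Since Prop.~\ref{hoz} already shows $\phi_{1/2}=\mi$ respects multiplication on $\bs{WCQSym}^0_\bQ$ and since \eqref{ik} gives $\Te|_{\bs{WCQSym}^0_\bQ}=\phi_{1/2}=\mi$, the remaining check is that applying $\Te$ commutes with replacing each overlined interval of composition $\ga$ by its $\Ga(P_\ga,\emptyset)$- or $\La(P_\ga,\emptyset)$-expansion; this is where the factor $2^{-\ell_0}$ in $u_{\al\be}$ is essential. Surjectivity is then clear from triangularity: on $\mathrm{span}_\bQ\{F_\al:\ol{\al}=\eta\}$ the matrix of $\Te$ into $\mathrm{span}_\bQ\{K_\be:\ol{\be}=\eta\}$ is upper-triangular in the block-size vector $(i_1,\dots,i_{k+1})$ with nonzero diagonal $u_{\be\be}=2^{-\ell_0(\be)}$. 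A bijective bialgebra map between Hopf algebras commutes with antipodes automatically.

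Finally, for the identity $\rho\circ\Te=\te\circ\varphi$ I would check both sides directly on the $F$-basis. If $\al\notin\mW\mC_0\cup\{\emptyset\}$, then $\varphi(F_\al)=0$ by definition; on the other hand, the support condition $1\leq j_p\leq i_p$ (or $j_p=i_p=0$) in $u_{\al\be}\neq0$ forces any $\be$ with $\rho(K_\be)\neq0$ to lie in $\mW\mC_0$, which in turn forces $\al\in\mW\mC_0$, so both sides vanish. For $\al=(0^i,\al_0,0^j)\in\mW\mC_0$ with $\al_0\in\mC$, only $\be=(0^{j_1},\al_0,0^{j_{k+1}})$ contribute, and using $\sum_{j_1=1}^i{i-1\choose j_1-1}=2^{i-1}$ together with $\sum_{j_{k+1}=1}^j{j-1\choose j_{k+1}-1}(-1)^{j_{k+1}}=-\de_{j,1}$, a short computation yields $\rho\circ\Te(F_\al)=(-1)^jK_{\al_0}$ when $j\in\{0,1\}$ and $0$ otherwise, matching $\te\circ\varphi(F_\al)=(-1)^jK_{\al_0}$ (respectively $0$) on the nose.
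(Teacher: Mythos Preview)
Your outline aligns with the paper's on the restriction $\Te|_{\bs{QSym}}=\te$, the coalgebra check (the same Vandermonde identity \eqref{c3} appears), and the final verification of $\rho\circ\Te=\te\circ\varphi$. Two points need attention.

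\textbf{Algebra property.} Your description of the multiplicativity argument is too vague at the crucial step. Saying that ``the remaining check is that applying $\Te$ commutes with replacing each overlined interval'' skips the heart of the matter: after invoking $\phi_{1/2}=\mi$ on $\bs{WCQSym}^0$ and $\te$ on $\bs{QSym}$, what remains is \emph{not} a statement about overlined blocks in isolation, but about how a block $0^m$ interacts with an adjacent positive part under $\Te$. Concretely, the paper isolates the mixed case $\al=0^m$, $\be=r\in\bP$ and must verify
\[
\Te(F_{0^m}\cdot F_r)=\Te(F_{0^m})\cdot\Te(F_r),
\]
which boils down to the identity
\[
{m-1\choose j-1}=\sum_{\substack{i_p\geq j_p,\ p\in[k]\\ i_1+\cdots+i_k=m}}\prod_{p=1}^k{i_p-1\choose j_p-1}
\]
for $j_1+\cdots+j_k=j$, a $k$-fold version of \eqref{c3}. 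Your sketch does not identify this case; without it, the factor $2^{-\ell_0}$ alone will not close the gap, because the shuffle sets $(P_\ga\sim\eta)\shuffle(P_\eta\smile\ga)$ vary with $\ga,\eta$ and the correspondence with $(P_\al\sim\be)\shuffle(P_\be\smile\al)$ requires exactly this bookkeeping at the interfaces between zero-blocks and positive parts.

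\textbf{Antipode.} You write ``a bijective bialgebra map between Hopf algebras commutes with antipodes automatically,'' but $\Te$ is \emph{not} bijective: its kernel is $\mathrm{span}_\bQ\{F_\al-F_{\tau(\al)}\}$. The statement you want is stronger and still standard: \emph{any} bialgebra morphism between Hopf algebras commutes with the antipodes. With that correction your approach is valid, and in fact cleaner than the paper's direct verification of $S\circ\Te=\Te\circ S$ via the explicit coefficients $d_{\al^t,\ga}$ and $u_{\ga\be}$.
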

\begin{proof}
By definition, $\Te$ is surjective, as it is the extension of $\phi_{1/2}=$id$_{\bs{WCQSym}^0_\bQ}$ mapping onto  WPQSym$_\bQ$, and for any $\al\in\mC$, $\Te(F_\al)=K_\al$, thus $\Te|_{\bs{QSym}}=\te$.

Next we show that $\Te$ is an algebra map. For any nonempty $\al,\be\in\mW\mC$,
\[\Te(F_\al)\cdot\Te(F_\be)=\sum_{\ga,\eta}u_{\al\ga}u_{\be\eta}K_\ga\cdot K_\eta=\sum_{\ga,\eta}u_{\al\ga}u_{\be\eta}\sum_{w\in(P_\ga\sim\eta)\shuffle(P_\eta\smile\ga)}\La(w),\]
and
\[\Te(F_\al\cdot F_\be)=\sum_{u\in(P_\al\sim\be)\shuffle(P_\be\smile\al)}\Te(\Ga(u)).\]

To show that these two kinds of expansions coincide, we only need to deal with the following three special cases:

(1) $\al=0^m,\,\be=0^n$. This case is clear, since $\Te|_{\bs{WCQSym}^0_\bQ}=\phi_{1/2}=\mb{id}_{\bs{WCQSym}^0_\bQ}$ is an algebra map.

(2) $\al,\,\be\in\mC$. This case is also clear, since $\Te|_{\bs{QSym}}=\te$ is an algebra map.

(3) $\al=0^m,\,\be=r\in\bP$. This is a new case only happened in WCQSym, and needs
further illustration.
\begin{align*}
&\Te(F_{0^m})\cdot\Te(F_r)=2^{-m}\sum_{j=1}^m{m-1\choose j-1}K_{0^j}\cdot K_r\\
&\quad=2^{-m}\sum_{j=1}^m{m-1\choose j-1}\sum_{s_1,\dots,s_k\in\bP\atop s_1+\cdots+s_k=r}\sum_{j_1,j_{k+1}\in\bN,\,j_2,\dots,j_k\in\bP\atop j_1+\cdots+j_{k+1}=j}K_{(0^{j_1},s_1,\dots,0^{j_k},s_k,0^{j_{k+1}})},\\
&\Te(F_{0^m}\cdot F_r)=
\sum_{s_1,\dots,s_k\in\bP\atop s_1+\cdots+s_k=r}\sum_{i_1,i_{k+1}\in\bN,\,i_2,\dots,i_k\in\bP\atop i_1+\cdots+i_{k+1}=m}\Te(F_{(0^{i_1},s_1,\dots,0^{i_k},s_k,0^{i_{k+1}})})\\
&\quad=2^{-m}\sum_{s_1,\dots,s_k\in\bP\atop s_1+\cdots+s_k=r}\sum_{i_1,i_{k+1}\in\bN,\,i_2,\dots,i_k\in\bP\atop i_1+\cdots+i_{k+1}=m}\sum_{0\leq j_p\leq i_p\atop
p=1,\dots,k+1}\prod_{p=1}^{k+1}{i_p-1\choose j_p-1}
K_{(0^{j_1},s_1,\dots,0^{j_k},s_k,0^{j_{k+1}})}\\
&\quad=2^{-m}\sum_{j=1}^m\sum_{s_1,\dots,s_k\in\bP\atop s_1+\cdots+s_k=r}
\sum_{j_1,j_{k+1}\in\bN,\,j_2,\dots,j_k\in\bP\atop j_1+\cdots+j_{k+1}=j}
\lb\sum_{i_p\geq j_p,\,p\in[k+1]\atop i_1+\cdots+i_{k+1}=m}\prod_{p=1}^{k+1}{i_p-1\choose j_p-1}\rb
K_{(0^{j_1},s_1,\dots,0^{j_k},s_k,0^{j_{k+1}})}.
\end{align*}
Hence, it reduces to the proof of the following identity
\[{m-1\choose j-1}=\sum_{i_p\geq j_p,\,p\in[k]\atop i_1+\cdots+i_k=m}\prod_{p=1}^k{i_p-1\choose j_p-1}\]
for any given $j\in[m]$ and $j_1,\dots,j_k\in\bP$ such that $j_1+\cdots+j_k=j$ with $k\leq j$. It is due to the generalization of \eqref{c3} with $k$ parts,
\[{r_1+\cdots+r_k+n-1\choose r_1+\cdots+r_k-1}=\sum_{n_p\in\bN,\,p\in[k]
\atop n_1+\cdots+n_k=n}\prod_{p=1}^k{r_p+n_p-1\choose r_p-1},\,r_1,\dots,r_k\in\bP,n\in\bN,\]
by taking $r_p=j_p,n_p=i_p-j_p,\,p=1,\dots,k$ and $n=m-j$.

On the other hand, for fixed $\be,\ga,\eta$ satisfying $\ol{\be}=\ol{\al},\,u_{\al\be}>0$ and $\be=\ga|\eta$, if
both the last part of $\ga$ and the first one of $\eta$ are positive, then
by definition there exist unique $\xi,\zeta$ such that $\xi|\zeta=\al,\,\ol{\xi}=\ol{\ga},\,\ol{\zeta}=\ol{\eta}$ and $u_{\al\be}=u_{\xi\ga}u_{\zeta\eta}$.
Otherwise, if either the last part of $\ga$ or the first one of $\eta$ is 0, we have the identity
\[u_{\al\be}=\sum_{\xi|\zeta=\al}\sum_{\ol{\xi}=\ol{\ga}\atop\ol{\zeta}=\ol{\eta}}u_{\xi\ga}u_{\zeta\eta}.\]
Indeed, in this case a interval $0^j$ of $\be$ as a contraction of $0^i$ in $\al$ is a combination of the last interval $0^k$ in $\ga$ and the first $0^{j-k}$ in $\eta$ with $0\leq k\leq j\leq i$. Then the above identity is due to \eqref{c3} by taking $r_1=k,\,r_2=j-k$ and $n=i-j$. Consequently,
\begin{align*}
&\De\circ\Te(F_\al)=\sum_{\ol{\be}=\ol{\al}}u_{\al\be}\De(K_\be)
=\sum_{\ol{\be}=\ol{\al}}u_{\al\be}\sum_{\ga|\eta=\be}K_\ga\ot K_\eta\\
&=\sum_{\xi|\zeta=\al}\lb\sum_{\ol{\xi}=\ol{\ga}}u_{\xi\ga}K_\ga\rb\ot\lb\sum_{\ol{\zeta}=\ol{\eta}}u_{\zeta\eta}K_\eta\rb
=\sum_{\xi|\zeta=\al}\Te(K_\xi)\ot\Te(K_\zeta)=(\Te\ot\Te)\circ\De(F_\al).
\end{align*}
Hence, $\Te$ is also a coalgebra map. Meanwhile, if $\ol{\be}=\ol{\al}$ and $u_{\al\be}>0$, it also implies that $\ol{\be^t}=\ol{\al^t}$ and $u_{\al^t,\,\be^t}>0$, and vice versa. In this case $u_{\al\be}=u_{\al^t,\,\be^t}$. Thus we have
\[S\circ\Te(F_\al)=\sum_{\ol{\be}=\ol{\al}}u_{\al\be}S(K_\be)
=\sum_{\ol{\be}=\ol{\al}}(-1)^{\|\be\|}u_{\al\be}K_{\be^t}
=\sum_{\ol{\be}=\ol{\al^t}}(-1)^{\|\be\|}u_{\al^t\be}K_\be.\]
On the other hand,
\[\Te\circ S(F_\al)=\sum_{\ol{\ga}=\ol{\al^t}}(-1)^{\|\ga\|}d_{\al^t,\ga}\Te(F_\ga)
=\sum_{\ol{\ga}=\ol{\al^t}}\sum_{\ol{\be}=\ol{\ga}}(-1)^{\|\ga\|}d_{\al^t,\ga}u_{\ga\be}K_\be
=\sum_{\ol{\be}=\ol{\al^t}}\sum_{\ol{\ga}=\ol{\be}}(-1)^{\|\ga\|}d_{\al^t,\ga}u_{\ga\be}K_\be.\]
Therefore, it is enough to prove that $(-1)^{\|\be\|}u_{\al^t\be}=\sum_{\ol{\ga}=\ol{\be}}(-1)^{\|\ga\|}d_{\al^t,\ga}u_{\ga\be}$ for any $\al,\be$ such that $\ol{\be}=\ol{\al^t}$. Assume that
$\al^t=(0^{i_1},s_1,\dots,0^{i_k},s_k,0^{i_{k+1}})$ with all $i_p\in\bN$ and $s_q\in\bP$, it is nontrivial only when
$\be=(0^{j_1},s_1,\dots,0^{j_k},s_k,0^{j_{k+1}})$ with $1\leq j_p\leq i_p$ or $j_p=i_p=0$ for any $p\in[k+1]$.
Thus we have
\begin{align*}
&\sum_{\ol{\ga}=\ol{\be}}(-1)^{\|\ga\|}d_{\al^t,\ga}u_{\ga\be}=(-1)^{|\be|}\sum_{j_p\leq r_p\leq i_p\atop p=1,\dots,k+1}\prod\limits_{p=1}^{k+1}{i_p-1\choose r_p-1}
{r_p-1\choose j_p-1}(-1)^{r_p}2^{-r_p}\\
&\quad=(-1)^{|\be|}\prod\limits_{p=1}^{k+1}{i_p-1\choose j_p-1}\lb\sum_{j_p\leq r_p\leq i_p}{i_p-j_p\choose r_p-j_p}(-1)^{r_p}2^{-r_p}\rb\\
&\quad=(-1)^{\|\be\|}\prod\limits_{p=1}^{k+1}{i_p-1\choose j_p-1}2^{-i_p}(2-1)^{i_p-j_p}
=(-1)^{\|\be\|}u_{\al^t\be},
\end{align*}
which ensures that $S\circ\Te=\Te\circ S$. As a result, $\Te$ is a Hopf algebra map. Moreover, if $\al=(0^i,\al_0,0^j)$ for some $i,j\in\bN$ and $\al_0\in\mC\setminus\{\emptyset\}$, then
\begin{align*}
&\rho\circ\Te(F_\al)=\sum_{\ol{\be}=\ol{\al}}u_{\al\be}\rho(K_\be)=2^{-i-j}\sum_{0\leq k\leq i\atop
0\leq l\leq j}{i-1\choose k-1}{j-1\choose l-1}
(-1)^l2^{2-\de_{k0}-\de_{l0}}K_{\al_0}\\
&=2^{-i-j}\cdot  2^i\cdot(\de_{j0}-2\de_{j1})K_{\al_0}=(\de_{j0}-\de_{j1})\te(F_{\al_0})
=\te\circ\varphi(F_\al).
\end{align*}
For other cases of $\al$, we have $\rho\circ\Te(F_\al)=\te\circ\varphi(F_\al)=\de_{\al,\emptyset}$. Finally we finish the proof.
\end{proof}
As in Theorem \ref{str}, we know that for any $\al\in\mW\mC$, there exists a unique $\be\in\mP\mC$ such that $K_\al=K_\be$ by Lemma \ref{eq}. Hence, it induces a map $\tau:\mW\mC\rw\mP\mC$ such that $K_{\tau(\al)}=K_\al$. In particular, Ker\,$\Te=\mb{span}_\bQ\{F_\al-F_{\tau(\al)}\}_{\al\in\mW\mC}$.

\section{Rota-Baxter algebras and weak peak quasisymmetric functions}
In this section we study Rota-Baxter algebra structures on WPQSym. First recall that
\begin{defn}
Given a commutative ring  $k$ and $\la\in k$,
an algebra $A$ over $k$ is called a \textit{Rota-Baxter algebra} of weight $\la$, if it is equipped with a $k$-linear operator $P:A\rw A$ satisfying the Rota-Baxter identity
\beq\label{rb}
P(x)P(y)=P(xP(y))+P(P(x)y)+\la P(xy),\,\forall x,y\in A.
\eeq
Such a operator $P$ is called a \textit{Rota-Baxter operator} of weight $\la$.
\end{defn}

There exists a natural Rota-Baxter algebra structure relative to WCQSym by \cite[Theorem 4.5]{GTY}.
Denote $x^{\ot \al}:=x^{\al_1}\ot\cdots\ot x^{\al_r}$ for $\al=(\al_1,\dots,\al_r)\in\bN^r$.
Let
\[\shuffle^+_\la(x):=\bigoplus\limits_{n\geq0}\bZ[x]^{\ot n}=\bigoplus\limits_{\al\in\mW\mC}\bZ x^{\ot \al},\]
which has the \textit{mixable shuffle product} $*_\la$ defined recursively by
\[1*_\la a=a*_\la 1=a,\,a*_\la b=a_1\ot(a'*_\la b)+b_1\ot(a*_\la b')+\la a_1b_1\ot(a'*_\la b')\]
for any $a=a_1\ot a'$, $b=b_1\ot b'\in\shuffle^+_\la(x)$.
By convention, $\bZ[x]^{\ot0}=\bZ$ and $x^{\ot\emptyset}=1\in\bZ$. Let \[\shuffle_\la(x):=\bZ[x]\ot\shuffle^+_\la(x)
=\bigoplus\limits_{n\geq1}\bZ[x]^{\ot n}
=\bigoplus\limits_{\al\in\bN^k,\,k\in\bP}x^{\ot\al},\]
which has the \textit{augmented mixable shuffle product} $\diamond_\la$ defined by
\beq\label{ms}(a_0\ot a)\diamond_\la(b_0\ot b)=a_0b_0\ot(a*_\la b),\eeq
for any $a_0\ot a=a_0\ot(a_1\ot\cdots\ot a_m)$, $b_0\ot b=b_0\ot (b_1\ot\cdots\ot b_n)\in\shuffle_\la(x)$.

By \cite[Theorem 4.1]{EG}, $(\shuffle_\la(x),\diamond_\la)$ is the free commutative unitary Rota-Baxter algebra of weight $\la$ generated by $\bZ[x]$, with the RB operator $P$ defined as
\[P(a_0\ot a)=\begin{cases}
1\ot(a_0\ot a),&a\in\bZ[x]^{\ot n},\,n\geq1,\\
1\ot aa_0,&a\in\bZ[x]^{\ot 0}=\bZ.
\end{cases}\]
for any $a_0\in\bZ[x]$; see also \cite{Guo}. In addition,  $\shuffle^+_\la(x)$ embeds as a Rota-Baxter subalgebra of $\shuffle_\la(x)$,  since there is an algebra isomorphism $\eta:(\shuffle^+_\la(x),*_\la)\rw(1\ot \shuffle^+_\la(x),\diamond_\la)$.
In particular, let $\shuffle^+(x):=\shuffle^+_1(x)$, then it is isomorphic to WCQSym identifing $x^{\ot \al}$ with $M_\al$ and serves as a Rota-Baxter subalgebra of $\shuffle(x):=\shuffle_1(x)$. Hence, if we abuse the notation $P$ for the Rota-Baxter operator on WCQSym, then $P(M_\al)=M_{(0,\al)},\,\forall\al\in\mW\mC$.
Moreover, viewing that $\shuffle(x)$ becomes a tensor product of two Hopf algebras $\bZ[x]$ and $\shuffle^+(x)\,(\cong\mb{WCQSym})$, the authors in \cite{GTY} successively provide a Hopf algebra structure on the free commutative unitary Rota-Baxter algebra $\shuffle(x)$.

From formula \eqref{gm}, we especially have
\begin{align*}
P(F_{0^r})&=\sum_{i=1}^r{r-1\choose i-1}P(M_{0^i})=\sum_{i=1}^r{r\choose i}M_{0^{i+1}}-\sum_{i=1}^r{r-1\choose i}M_{0^{i+1}}\\
&=\sum_{i=1}^{r+1}{r\choose i-1}M_{0^i}-\sum_{i=1}^r{r-1\choose i-1}M_{0^i}=F_{0^{r+1}}-F_{0^r},\,r\in\bP,
\end{align*}
and $P(F_\emptyset)=P(1)=F_0$. In general from formula \eqref{FM}, we obtain that
\[P(F_\al)=F_{(0,\al)}-(1-\de_{\al,\emptyset})F_\al
=\sum_{\be\leq\al}c_{\al\be}M_{(0,\,\be)}\]
for any $\al\in\mW\mC$, since $c_{(0,\al),(0,\,\be)}-c_{\al,(0,\,\be)}=c_{\al\be}$ for any $\be\leq\al$.
However, the action of RB-operator $P$ on $K_\al$'s is much subtler.
\begin{theorem}\label{RBK}
WPQSym$_\bQ$ is a Rota-Baxter subalgebra of WCQSym$_\bQ$, as we have the following formula,
\beq\label{rbk}
P(K_\al)=\sum_{\be\unlhd\al}n_{\al\be}2^{\ell(\be)}M_{(0,\,\be)}
=\begin{cases}
\tfrac{1}{2}(K_{(0,\al)}-K_\al+K_{\al_-}),&\mb{if }i_1=0,\,i_2>0\mb{ and }s_1=1,\\
\tfrac{1}{2}(K_{(0,\al_+)}-K_{\al_+}),&\mb{if }i_1=i_2=0,\,s_1=1, \mb{ but }\al\neq(1),\\
\tfrac{1}{2}(K_{(0,\al)}-(1-\de_{\al,\emptyset})K_\al),&\mb{otherwise},
\end{cases}
\eeq
for any $\al=(0^{i_1},s_1,\dots,0^{i_k},s_k,0^{i_{k+1}})$ with all $i_p\in\bN,\,s_q\in\bP$, where we denote $\al_-:=(s_1,0^{i_2-1},s_2,0^{i_3},\dots,s_k,0^{i_{k+1}})$ and $\al_+=(s_1+s_2,0^{i_3},\dots,s_k,0^{i_{k+1}})$.
\end{theorem}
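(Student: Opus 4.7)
The plan is to prove the first equality by direct substitution and the second ($K$-case formulas) by expanding both sides in the $M$-basis via Proposition~\ref{p1} and matching coefficients. The first equality is immediate: since $K_\al=\sum_{\be\unlhd\al}n_{\al\be}2^{\ell(\be)}M_\be$ from \eqref{PM}, and $P(M_\be)=M_{(0,\be)}$ was noted just before the theorem from the Rota-Baxter operator on $\shuffle(x)$, linearity of $P$ yields the stated sum; proving the second equality will therefore show $P(K_\al)\in\mb{WPQSym}_\bQ$ and establish the Rota-Baxter subalgebra structure.

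The key combinatorial tool is a two-fold lift of decompositions. Given a decomposition $D=(D_1,\ldots,D_m)$ of $\al$ into empty-peak pieces with mutation $\be\unlhd\al$, there are up to two lifts to decompositions of $(0,\al)$: the \emph{sep-lift} $((0),D_1,\ldots,D_m)$, with mutation $(0,\be)$ and length $m+1$, and the \emph{merge-lift} $((0,D_1),D_2,\ldots,D_m)$, with the same mutation $\be$ and length $m$. The sep-lift is always valid; the merge-lift is valid iff $(0,D_1)$ has empty peak. Conversely, every decomposition of $(0,\al)$ arises uniquely as a sep- or merge-lift of some decomposition of $\al$. When every merge-lift is valid, summing over $D$ gives
\[
K_{(0,\al)}
=\underbrace{\sum_{\be\unlhd\al}n_{\al\be}2^{\ell(\be)+1}M_{(0,\be)}}_{\text{sep}\,=\,2P(K_\al)}
\;+\;\underbrace{\sum_{\be\unlhd\al}n_{\al\be}2^{\ell(\be)}M_\be}_{\text{merge}\,=\,K_\al},
\]
which is the "otherwise" formula (with $\de_{\al,\emptyset}$ handling $\al=\emptyset$).

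A direct check on the empty-peak classification $(1^i,0^j)$, $(1^i,0^j,k)$ shows $(0,D_1)$ fails to have empty peak exactly when $D_1$ starts with $1$ and extends beyond $(1)$. This can only happen if $s_1=1$, and the two shapes it takes correspond to cases~(i) and~(ii). In case~(i) ($\al=(1,0^{i_2},s_2,\ldots)$ with $i_2\ge1$), the invalid $D_1$'s have the form $(1,0^j,\ldots)$ with $j\ge1$; stripping one leading zero from such $D_1$ yields a bijection with all decompositions of $\al_-=(1,0^{i_2-1},s_2,\ldots)$, preserving both the mutation weights and the lengths, so the missing "merge" contribution is exactly $K_{\al_-}$ and we get $K_{(0,\al)}=2P(K_\al)+K_\al-K_{\al_-}$. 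In case~(ii) ($\al=(1,s_2,0^{i_3},\ldots)$ with $s_2\in\bP$), Lemma~\ref{eq} gives $K_\al=K_{\al_+}$ for $\al_+=(s_1+s_2,0^{i_3},\ldots)$ (same intervals of zeros, same peak set, differing only by merging $s_1=1$ with $s_2$); the otherwise formula applies to $\al_+$ (whose leading positive part is $\ge2$), and combined with $P(K_\al)=P(K_{\al_+})$ it collapses to $P(K_\al)=\tfrac12(K_{(0,\al_+)}-K_{\al_+})$.

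The main obstacle is the case~(i) bookkeeping: the explicit bijection between failed merge-lifts of $\al$ and decompositions of $\al_-$, together with the verification that weights and lengths match so that the missing "merge" mass equals $K_{\al_-}$ exactly. Binomial identities of the form \eqref{c3} in Proposition~\ref{CF}, governing the distribution of leading zeros among pieces, will recur throughout. Once all three coefficient equalities are verified in the $M$-basis, closure of $\mb{WPQSym}_\bQ$ under $P$ follows, and the Rota-Baxter identity \eqref{rb} on $\mb{WCQSym}_\bQ$ restricts to give the asserted subalgebra structure.
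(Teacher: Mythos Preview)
Your proposal is correct and follows the same overall architecture as the paper: expand $K_{(0,\al)}$ in the $M$-basis via decompositions of $(0,\al)$, split these into ``sep-lifts'' (contributing $2P(K_\al)$) and ``merge-lifts'' (contributing $K_\al$ when all are valid), and then analyze the failure locus of merge-lifts. The paper encodes the same dichotomy through the counting identities $n_{(0,\al),(0,\ga)}-n_{\al,(0,\ga)}=n_{\al\ga}$ and $n_{(0,\al),\be}=(1-\de_{\al,\emptyset})n_{\al\be}$ for $\be$ with $j_1=0$; case~(ii) is reduced to the ``otherwise'' case via Lemma~\ref{eq} in both proofs.

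The one genuine difference is in case~(i). The paper verifies $n_{(0,\al),\be}=n_{\al\be}-n_{\al_-,\be}$ (for $j_1=0$) by invoking the explicit binomial product~\eqref{11} for $n_{\al\be}$ and the Pascal-type identity ${i_2-1\choose j_2-\ep_2}={i_2\choose j_2-\ep_2+1}-{i_2-1\choose j_2-\ep_2+1}$. Your bijection---strip the zero immediately after the leading $1$ in $D_1$---proves the same identity combinatorially and more transparently, since it visibly preserves both the mutation $\be_D$ and the length $\ell(\be_D)$. This makes your closing remark that ``binomial identities of the form~\eqref{c3}\ldots will recur throughout'' unnecessary: once the bijection is set up, no such identities are needed. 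Your approach is thus slightly cleaner in the hard case; the paper's approach has the mild advantage that the identities on $n_{\al\be}$ are stated in a form directly reusable elsewhere.
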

\begin{proof}
Fix any $\be=(0^{j_1},t_1,\dots,0^{j_r},t_r,0^{j_{r+1}})\unlhd\al$. First note that 
\beq\label{nab}n_{(0,\al),(0,\ga)}-n_{\al,(0,\ga)}=n_{\al\ga}\eeq
for any $\al,\ga\in\mW\mC$, as the first $0$ in $(0,\al)$ may merge with the next part or not when $(0,\al)$ mutate to $(0,\ga)$. If $s_1\geq2$ or $\al=0^{i_1}$,
due to \eqref{PM}, \eqref{nab} and
\beq\label{nab1}
n_{(0,\al),\,\be}=(1-\de_{\al,\emptyset})n_{\al\be},\mb{ if }j_1=0,
\eeq
we have
\begin{align*}
P(K_\al)&=\sum_{\ga\unlhd\al}n_{\al\ga}2^{\ell(\ga)}M_{(0,\ga)}=\sum_{\ga\unlhd\al}\lb n_{(0,\al),(0,\ga)}-n_{\al,(0,\ga)}\rb 2^{\ell(\ga)}M_{(0,\ga)}\\
&=\dfrac{1}{2}\lb\sum_{\ga\unlhd\al}n_{(0,\al),(0,\ga)}2^{\ell(0,\ga)}M_{(0,\ga)}
-\sum_{\ga\unlhd\al}n_{\al,(0,\ga)}2^{\ell(0,\ga)}M_{(0,\ga)}\rb\\
&=\dfrac{1}{2}\lb\sum_{\be\unlhd(0,\al)}n_{(0,\al),\,\be}2^{\ell(\be)}M_\be
-(1-\de_{\al,\emptyset})\sum_{\be\unlhd\al}n_{\al\be}2^{\ell(\be)}M_\be\rb
=\dfrac{1}{2}(K_{(0,\al)}-(1-\de_{\al,\emptyset})K_\al).
\end{align*}

Otherwise, we assume that exactly the first $r$ positive parts of $\al$ are $1$, i.e. $s_1,\dots,s_r=1$. By \eqref{11}, $n_{\al\be}$ in this case has the factor
\[\sum_{\ep_1,\dots,\ep_{r+1}\in\{0,1\}}{i_1-1 \choose j_1-\ep_1}
{i_2-1+\ep_1 \choose j_2-\ep_2+\ep_1}
\cdots{i_r-1+\ep_{r-1} \choose j_r-\ep_r+\ep_{r-1}}
{i_{r+1}-\ep_{r+1}+\ep_r\choose j_{r+1}-\ep_{r+1}+\ep_r}\]
when $r<k$, otherwise $\ep_{r+1}$ can only take 1 if $r=k$.

Now if $i_1>0$, one can check that the identities \eqref{nab}, \eqref{nab1} still work, thus we obtain the desired result as above.
If $i_1=i_2=0$, then $\al$ has $s_1=1$ followed by a positive $s_2$ adjacently, or $\al=(1)$. It is clear when $\al=(1)$. For the case when
$\al$ begins with $s_1=1,s_2>0$, by Lemma \ref{eq} $K_\al=K_{\al_+}$, and we return back to the previous case when $s_1\geq2$.
The case when $i_1=0,\,i_2>0$ is more complicated. For such $\al$, note that
$n_{(0,\al),\,\be}$'s differ a lot between two kinds of $\be$ with $j_1=0$ or $j_1=1$.
If $j_1=0$, $n_{(0,\al),\,\be}$ has the factor
\[\sum_{\ep_2,\dots,\ep_{r+1}\in\{0,1\}}{1-1 \choose 0-0}{i_2-1+0\choose j_2-\ep_2+0}\cdots{i_{r+1}-\ep_{r+1}+\ep_r\choose j_{r+1}-\ep_{r+1}+\ep_r}.\]
Now $i_2>0$ implies that ${i_2-1\choose j_2-\ep_2}={i_2\choose j_2-\ep_2+1}-{i_2-1\choose j_2-\ep_2+1}$ or
${i_2-\ep_2\choose j_2-\ep_2}={i_2-\ep_2+1\choose j_2-\ep_2+1}-{i_2-\ep_2\choose j_2-\ep_2+1}$, and thus we obtain that
\[n_{(0,\al),\,\be}=n_{\al\be}-n_{\al_-,\,\be},\mb{ when }j_1=0.\]
If $j_1=1$, we can write  $\be=(0,\be_0)$. By \eqref{nab} we know that $n_{(0,\al),\,\be}=n_{\al\be_0}$, as $n_{\al\be}=0$.
In conclusion, when $i_1=0,\,i_2>0$, by \eqref{PM} we have
\begin{align*}
P(K_\al)&=\sum_{\be_0\unlhd\al}n_{\al\be_0}2^{\ell(\be_0)}M_{(0,\,\be_0)}
=\dfrac{1}{2}\sum_{\be\unlhd(0,\al),\,j_1=1}n_{(0,\al),\,\be}2^{\ell(\be)}M_\be\\
&=\dfrac{1}{2}\lb\sum_{\be\unlhd(0,\al)}n_{(0,\al),\,\be}2^{\ell(\be)}M_\be
-\sum_{\be\unlhd(0,\al),\,j_1=0}n_{(0,\al),\,\be}2^{\ell(\be)}M_\be\rb\\
&=\dfrac{1}{2}\lb\sum_{\be\unlhd(0,\al)}n_{(0,\al),\,\be}2^{\ell(\be)}M_\be
-\sum_{\be\unlhd\al}n_{\al\be}2^{\ell(\be)}M_\be+\sum_{\be\unlhd\al_-}n_{\al_-,\,\be}2^{\ell(\be)}M_\be\rb\\
&=\tfrac{1}{2}(K_{(0,\al)}-K_\al+K_{\al_-}).\qedhere
\end{align*}
\end{proof}
For example, one can easily check that
\[P(K_{10^r1})=\begin{cases}
\tfrac{1}{2}(K_{010^r1}-K_{10^r1}+K_{10^{r-1}1}),&r\geq1,\\
\tfrac{1}{2}(K_{02}-K_2),&r=0.
\end{cases}\]

\begin{rem}
It is rigid for the map $\phi_b$ in Prop. \ref{hoz} to become a Rota-Baxter algebra automorphism of WCQSym$^0$. Actually it is true only when $b=\tfrac{1}{2}$ and $\phi_{1/2}=$id, since
\[P\circ\phi_b(F_{0^n})=\sum_{i=1}^nb_{ni}P(K_{0^i})=\dfrac{1}{2}\sum_{i=1}^nb_{ni}(K_{0^{i+1}}-K_{0^i})
=\dfrac{1}{2}\lb b^nK_{0^{n+1}}+\sum_{i=1}^n(b_{n,i-1}-b_{ni})K_{0^i}\rb\]
and
\[\phi_b\circ P(F_{0^n})=\phi_b(F_{0^{n+1}}-F_{0^n})
=b^{n+1}K_{0^{n+1}}+\sum_{i=1}^n(b_{n+1,i}+b_{ni})K_{0^i}.\]
\end{rem}

We also note that the map $\Te$ is not a homomorphism of Rota-Baxter algebras, with respect to the Rota-Baxter subalgebra structure of WPQSym$_\bQ$ in WCQSym$_\bQ$. It intrigues us to endow WPQSym$_\bQ$ with another proper Rota-Baxter operator to meet such requirement.
\begin{theorem}\label{rb1}
WPQSym$_\bQ$ has another different Rota-Baxter algebra structure defined by
\[\hat{P}(K_\al)=\dfrac{1}{2}(K_{(0,\al)}-2^{\de_{i_1,0}}(1-\de_{\al,\emptyset})K_\al)\]
for any $\al=(0^{i_1},s_1,\dots,0^{i_k},s_k,0^{i_{k+1}})$ with all $i_p\in\bN,\,s_q\in\bP$, where we denote $\hat{P}$ the Rota-Baxter operator defined here avoiding confusion with the previous one.
Moreover, we have the commutation equality $\hat{P}\circ\Te=\Te\circ P$.
\end{theorem}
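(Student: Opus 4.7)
The plan is to first establish the commutation identity $\hat{P}\circ\Te=\Te\circ P$ by direct computation on the $F$-basis of WCQSym$_\bQ$, and then to deduce the Rota-Baxter identity for $\hat P$ automatically via transfer through the surjective algebra homomorphism $\Te$. As a preliminary, I would check that the defining formula for $\hat P$ is well-defined: if $K_\al=K_\be$ then by Lemma \ref{eq} the two weak compositions share the same zero-intervals and peak set, so in particular they have the same leading zero-count $i_1$ and satisfy $\al=\emptyset\iff\be=\emptyset$; moreover $K_{(0,\al)}=K_{(0,\be)}$ since prepending a zero preserves the equivalence of Lemma \ref{eq}. Hence every ingredient of the formula depends only on the class $[K_\al]$ in WPQSym$_\bQ$.

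For the commutation, I would evaluate both sides on a general basis element $F_\al$. Using $P(F_\al)=F_{(0,\al)}-(1-\de_{\al,\emptyset})F_\al$ gives
\[\Te\circ P(F_\al)=\sum_{\ol\ga=\ol\al}u_{(0,\al),\ga}K_\ga-(1-\de_{\al,\emptyset})\sum_{\ol\be=\ol\al}u_{\al\be}K_\be,\]
while expanding $\Te(F_\al)=\sum_{\ol\be=\ol\al}u_{\al\be}K_\be$ and applying $\hat P$ term by term yields
\[\hat P\circ\Te(F_\al)=\tfrac12\sum_{\ol\be=\ol\al}u_{\al\be}K_{(0,\be)}-\tfrac12\sum_{\ol\be=\ol\al}2^{\de_{j_1(\be),0}}(1-\de_{\be,\emptyset})u_{\al\be}K_\be.\]
Matching the coefficient of each $K_\ga$ reduces to an elementary Pascal-type identity, handled by a three-way case split. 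For $\al=\emptyset$, one checks directly that $\Te(F_0)=\tfrac12K_0$, matching $\hat P(K_\emptyset)=\tfrac12K_0$. For $i_1(\al)\geq1$, every $\be$ occurring in the expansion has $j_1(\be)\geq1$, so the doubling factor is trivial and the matching reduces to Pascal's rule $\binom{i_1}{j_1-1}=\binom{i_1-1}{j_1-1}+\binom{i_1-1}{j_1-2}$. For $i_1(\al)=0$, only the $j_1(\be)=0$ summands survive; here the doubling factor $2$ cancels the $\tfrac12$ outside, leaving the identity $\tfrac12\sum_\be u_{\al\be}K_{(0,\be)}=\Te(F_{(0,\al)})$, which follows from direct substitution of the defining formulas for $u$.

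Once commutation is established, the Rota-Baxter identity for $\hat P$ follows by transfer: given $x,y\in$ WPQSym$_\bQ$, choose preimages $a,b$ under $\Te$; then using that $\Te$ is an algebra map and $P$ is Rota-Baxter on WCQSym$_\bQ$,
\begin{align*}
\hat P(x)\hat P(y)&=\Te(P(a))\Te(P(b))=\Te(P(a)P(b))\\
&=\Te(P(aP(b))+P(P(a)b)+P(ab))=\hat P(x\hat P(y))+\hat P(\hat P(x)y)+\hat P(xy),
\end{align*}
so $\hat P$ satisfies the Rota-Baxter identity of weight $1$ on WPQSym$_\bQ$. The main obstacle is the binomial book-keeping in the commutation step, where the factor $2^{\de_{i_1,0}}$ and the separate $(1-\de_{\al,\emptyset})$ correction force the careful three-way case split above; once this is handled, no further nontrivial combinatorics is needed.
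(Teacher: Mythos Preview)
Your proposal is correct and follows essentially the same approach as the paper: both first argue well-definedness of $\hat P$, then reduce the Rota-Baxter property to the commutation $\hat P\circ\Te=\Te\circ P$ via the surjectivity of the algebra map $\Te$, and finally verify commutation on the $F$-basis using $P(F_\al)=F_{(0,\al)}-(1-\de_{\al,\emptyset})F_\al$ together with a Pascal-type identity relating $u_{(0,\al),\be}$ and $u_{\al\be}$. The only cosmetic difference is that the paper packages the binomial bookkeeping into a single identity $u_{(0,\al),\be}-u_{\al\be}=\tfrac12(u_{\al\be'}-2^{\de_{i_1,0}}u_{\al\be})$ (using that $u_{\al\be}>0$ forces $\de_{i_1,0}=\de_{j_1,0}$), whereas you split explicitly into the cases $\al=\emptyset$, $i_1\geq1$, and $i_1=0$.
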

\begin{proof}
First note that $\hat{P}$ is well-defined, as it can be defined on the basis proposed in Theorem \ref{str} and then linearly extended onto the whole algebra WPQSym$_\bQ$. Since $\Te$ is a surjective algebra map and $P$ is a Rota-Baxter operator, we only need to prove the commutation equality stated in the theorem to ensure that $\hat{P}$ is a Rota-Baxter operator on WPQSym.

If $\al=\emptyset$, then
$\Te\circ P(F_\emptyset)=\Te(F_0)=\dfrac{1}{2}K_0=\hat{P}(K_\emptyset)=\hat{P}\circ\Te(F_\emptyset)$.
Otherwise, as $\al=(0^{i_1},s_1,\dots,0^{i_k},s_k,0^{i_{k+1}})\in\mW\mC\backslash\{\emptyset\}$,
we first note that
\begin{align*}
&u_{(0,\al),\,\be}-u_{\al\be}=2^{-\ell_0(\al)-1}\lb{i_1\choose j_1-1}-2{i_1-1\choose j_1-1}\rb\prod\limits_{p=2}^{k+1}{i_p-1\choose j_p-1}\\
&\quad=2^{-\ell_0(\al)-1}\lb{i_1-1\choose j_1-2}-2^{\de_{i_1,0}}{i_1-1\choose j_1-1}\rb\prod\limits_{p=2}^{k+1}{i_p-1\choose j_p-1}=\dfrac{1}{2}(u_{\al\be'}-2^{\de_{i_1,0}}u_{\al\be})
\end{align*}
for $\be=(0^{j_1},s_1,\dots,0^{j_k},s_k,0^{j_{k+1}})$, and we denote $\be=(0,\be')$ when $j_1\geq1$. Meanwhile, by definition if $u_{\al\be}>0$, then $\de_{i_1,0}=\de_{j_1,0}$. Hence, we have
\begin{align*}
&\Te\circ P(F_\al)=\Te(F_{(0,\al)}-F_\al)=\sum_{\ol{\be}=\ol{\al}}(u_{(0,\al),\,\be}-u_{\al\be})K_\be=
\dfrac{1}{2}\sum_{\ol{\be}=\ol{\al}}(u_{\al\be'}-2^{\de_{i_1,0}}u_{\al\be})K_\be\\
&\quad=\dfrac{1}{2}\sum_{\ol{\be}=\ol{\al}}u_{\al\be}(K_{(0,\,\be)}-2^{\de_{j_1,0}}K_\be)
=\sum_{\ol{\be}=\ol{\al}}u_{\al\be}\hat{P}(K_\be)=\hat{P}\circ\Te(F_\al). \qedhere
\end{align*}
\end{proof}
By the previous results, it is easy to check that there exists a Hopf algebra projection
\[\pi:\mb{WCQSym}\rw\mb{WCQSym}^0,\,F_\al\mapsto\begin{cases}
F_\al,&\al=0^r\mb{ for some }r\in\bN,\\
0,&\mb{otherwise}.
\end{cases}\]
Also, we note that the restriction of $\pi$ on WPQSym is a Hopf algebra projection onto WPQSym$^0$ fixing
all $K_{0^r}$'s and vanishing other basis elements. We abuse the notation to denote it by $\pi$.
\begin{cor}
WPQSym$^0_\bQ$ is a Rota-Baxter subalgebra of WPQSym$_\bQ$ with RB operator $P$, and a Rota-Baxer quotient algebra of WPQSym$_\bQ$ with RB operator $\hat{P}$ under the projection $\pi$.
\end{cor}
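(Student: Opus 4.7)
The plan is to reduce both halves of the corollary to direct checks on the basis $\{K_{0^r}\}_{r\in\bN}$, using the explicit formulas for $P$ and $\hat{P}$ recorded in Theorems \ref{RBK} and \ref{rb1}.

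For the subalgebra assertion, WPQSym$^0_\bQ$ is already known to be closed under multiplication (see the discussion of $K_{0^m}\cdot K_{0^n}$ leading up to and including Lemma \ref{mukz}), so I only need to verify that $P$ sends it into itself. Since $\al=0^r$ contains no positive parts $s_q$, it falls into the \emph{otherwise} branch of \eqref{rbk}, giving
\[P(K_{0^r})=\tfrac{1}{2}\bigl(K_{0^{r+1}}-(1-\de_{r,0})K_{0^r}\bigr),\]
which lies in WPQSym$^0_\bQ$, together with $P(K_\emptyset)=\tfrac{1}{2}K_0$. The Rota-Baxter identity is then inherited verbatim from the ambient algebra.

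For the quotient assertion, by the paragraph preceding the corollary $\pi$ is a Hopf algebra surjection onto WPQSym$^0$ fixing each $K_{0^r}$ and killing every other basis vector. To upgrade $\pi$ to a Rota-Baxter quotient map, it suffices to check that $\ker\pi$ is stable under $\hat{P}$, equivalently
\[\pi\circ\hat{P}=\hat{P}\big|_{\bs{WPQSym}^0_\bQ}\circ\pi\]
on each basis element $K_\al$ with $\al\in\mP\mC$. I split this into two cases. When $\al=0^r$, the $i_1=r$ case of Theorem \ref{rb1} gives $\hat{P}(K_{0^r})=\tfrac12(K_{0^{r+1}}-(1-\de_{r,0})K_{0^r})$, which matches $P(K_{0^r})$ above, and both sides of the intertwining identity reduce to this same element of WPQSym$^0_\bQ$ (so in particular $\hat{P}$ and $P$ agree on WPQSym$^0_\bQ$, ensuring the induced operator on the quotient coincides with the operator from the first part). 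When $\al\in\mP\mC$ carries at least one positive part $s_1$, both $\al$ and $(0,\al)$ retain a positive part, so by Lemma \ref{eq} and Theorem \ref{str} neither $K_\al$ nor $K_{(0,\al)}$ collapses to any $K_{0^s}$; hence $\pi(K_\al)=\pi(K_{(0,\al)})=0$, and $\hat{P}(K_\al)=\tfrac12\bigl(K_{(0,\al)}-2^{\de_{i_1,0}}K_\al\bigr)$ is sent by $\pi$ to $0$, agreeing with $\hat{P}(\pi(K_\al))=0$.

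The main obstacle is not computational but bookkeeping: one must pass through the identification $K_{(0,\al)}=K_{\tau(0,\al)}$ with $\tau(0,\al)\in\mP\mC$ before applying $\pi$, and observe that $\tau(0,\al)$ is of pure-zero form $0^s$ \emph{iff} $\al$ itself is $0^r$. Once this is made explicit, the stability $\hat P(\ker\pi)\subseteq\ker\pi$ is immediate from the two-term shape of $\hat P(K_\al)$, and the Rota-Baxter identity on the quotient follows because the induced operator is just the restriction to the subalgebra handled in the first part.
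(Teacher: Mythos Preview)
Your proposal is correct. The paper states this corollary without proof, treating it as an immediate consequence of Theorems \ref{RBK} and \ref{rb1} and the paragraph introducing $\pi$; your argument simply spells out the verifications the reader is expected to supply---that $P$ and $\hat{P}$ both send $K_{0^r}$ to $\tfrac12(K_{0^{r+1}}-(1-\de_{r,0})K_{0^r})$, and that $\hat{P}$ preserves $\ker\pi$ because any $\al\in\mP\mC$ with positive weight yields $K_{(0,\al)}=K_{\tau(0,\al)}$ with $\tau(0,\al)$ again of positive weight---and it does so correctly.
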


\begin{rem}
Recently, the dual objects of Rota-Baxter algebras, called Rota-Baxter coalgebras, are introduced, and later Rota-Baxter bialgebras are also defined with such two structures compatible; see \cite{JZ,ML}.
For (WCQSym,$P$), one can easily check the following identity
\[(P\ot P)\circ(\De-\mb{id}\ot(\mu\circ\ve))=(\mb{id}\ot P)\circ\ol{\De}\circ P,\]
with $\ol{\De}:=\De-\mb{id}\ot(\mu\circ\ve)-(\mu\circ\ve)\ot\mb{id}$, the reduced coproduct, which is different from the defining condition for Rota-Baxter coalgebras dual to \eqref{rb}. By the commutation equality in Theorem \ref{rb1}, (WPQSym,$\hat{P}$) also obeys the above rule. It is interesting to find a Rota-Baxter coalgebra (and even RB-bialgebra) structure on WCQSym or WPQSym.
\end{rem}

In the end, we finish our discussion about weak composition (resp. peak) quasisymmetric functions with a commutative diagram as a conclusion.
\[\xymatrix@=2em{
&\mb{WPQSym}^0_\bQ\ar@{->}[r]\ar@/^/[d]
&\mb{WCQSym}^0_\bQ\ar@{->}[r]^-{\phi_{1/2}}\ar@/^/[d]
&\mb{WPQSym}^0_\bQ\ar@/^/[d]&\\
\mb{WPQSym}^0_\bQ\ar@{->}[r]\ar@{->}[d]^-{P}
&\mb{WPQSym}_\bQ\ar@{->}[r]\ar@{->}[d]^-{P}\ar@/^/[u]^-{\pi}
&\mb{WCQSym}_\bQ\ar@{->}[r]^-{\Te}\ar@{->}[d]^-{P}\ar@/^/[u]^-{\pi}
&\mb{WPQSym}_\bQ\ar@{->}[r]^-{\pi}\ar@{->}[d]^-{\hat{P}}\ar@/^/[u]^-{\pi}
&\mb{WPQSym}^0_\bQ\ar@{->}[d]^-{\hat{P}}\\
\mb{WPQSym}^0_\bQ\ar@{->}[r]
&\mb{WPQSym}_\bQ\ar@{->}[r]
&\mb{WCQSym}_\bQ\ar@{->}[r]^-{\Te}\ar@/^/[d]^-{\varphi}
&\mb{WPQSym}_\bQ\ar@{->}[r]^-{\pi}\ar@/^/[d]^-{\rho}
&\mb{WPQSym}^0_\bQ\\
&\mb{PQSym}_\bQ\ar@{->}[r]\ar@{->}[u]
&\mb{QSym}_\bQ\ar@{->}[r]^-{\te}\ar@/^/[u]
&\mb{PQSym}_\bQ\ar@/^/[u]&},\]
where all the arrows without labels stuck on represent canonical inclusions.
The whole picture tells us that it is a really meaningful lifting from (peak) quasisymmetric functions to the weak case, in perspective of Hopf algebras also with extraordinary compatible Rota-Baxter algebra structure.

\centerline{\bf Acknowledgments}
This work is supported by the NSFC Grants (No. 11501214, 11771142) and Guangzhou University's 2017 training program for young top-notch personnels.

\bigskip
\bibliographystyle{amsalpha}

\end{document}